\let\hat\widehat
\let\tilde\widetilde
\newcommand{\dtm}{\delta}
\newcommand{\edtm}{\widehat\delta}
\newcommand{\dimension}{d}
\newcommand{\figref}[1]{Figure~\ref{#1}}
\newcommand{\secref}[1]{Section~\ref{#1}}
\newcommand{\thmref}[1]{Theorem~\ref{#1}}
\renewcommand{\P}{\mbox{$\mathbb{P}$}}
\newcommand{\R}{\mbox{$\mathbb{R}$}}
\newcommand{\E}{\mbox{$\mathbb{E}$}}
\DeclareMathOperator*{\argmax}{argmax}
\newtheorem{theorem}{Theorem}
\newtheorem{lemma}[theorem]{Lemma}
\newtheorem{example}[theorem]{Example}
\newtheorem{corollary}[theorem]{Corollary}
\newtheorem{remark}[theorem]{Remark}
\begin{document}

\begin{frontmatter}

\title{Robust Topological Inference:\\Distance To a Measure and Kernel Distance}
\runtitle{Robust Topological Inference}

\begin{aug}
\author{\fnms{Fr\'ed\'eric}
\snm{Chazal}\thanksref{a,t1}\ead[label=e1]{frederic.chazal@inria.fr}},
\author{\fnms{Brittany Terese}
\snm{Fasy}\thanksref{b}\ead[label=e2]{brittany@fasy.us}},
\author{\fnms{Fabrizio}
\snm{Lecci}\thanksref{c,t3}\ead[label=e3]{lecci@cmu.edu}},\\
\author{\fnms{Bertrand}
\snm{Michel}\thanksref{d,t1}\ead[label=e4]{bertrand.michel@upmc.fr}},
\author{\fnms{Alessandro}
\snm{Rinaldo}\thanksref{c,t3}\ead[label=e5]{arinaldo@cmu.edu}},
\and
\author{\fnms{Larry} 
\snm{Wasserman}\thanksref{c,t5}\ead[label=e6]{larry@cmu.edu}},

\thankstext{t1}{Research supported by ANR-13-BS01-0008.}
\thankstext{t3}{Research supported by NSF CAREER Grant DMS 1149677.}
\thankstext{t5}{Research supported by Air Force Grant FA95500910373, NSF Grant
DMS-0806009.}

\runauthor{Chazal et al.}

\affiliation[a]{INRIA}
\affiliation[b]{Tulane University}
\affiliation[c]{Carnegie Mellon University}
\affiliation[d]{Universit\'e Pierre et Marie Curie}

\address{
F. Chazal\\
Inria Saclay - Ile-de-France\\
Alan Turing Bldg, Office 2043\\
1 rue Honor\'e d'Estienne d'Orves\\
 91120 Palaiseau \\
\printead{e1}
}

\address{
B.T. Fasy\\
Department of Computer Science\\
Tulane University\\
New Orleans, LA 70118\\
\printead{e2}
}

\address{
B. Michel\\
LSTA\\
Universit\'e Pierre et Marie Curie\\
15-25, Office 220 \\
4 place Jussieu, 75005 Paris\\
\printead{e4}
}

\address{
F. Lecci\\
A. Rinaldo\\
L. Wasserman\\
Department of Statistics\\
Carnegie Mellon University\\
Pittsburgh, PA 15213\\
\printead{e3}\\
\printead{e5}\\
\printead{e6}
}

\end{aug}

\today

\begin{abstract}
Let $P$ be a distribution with support
$S$. 
The salient features of $S$
can be quantified with persistent homology,
which summarizes topological features of the sublevel sets of
the distance function
(the distance of any point $x$ to $S$).
Given a sample from $P$
we can infer 
the persistent homology
using an empirical version
of the distance function.
However, the empirical distance function
is highly non-robust to noise and outliers.
Even one outlier is deadly.
The distance-to-a-measure (DTM),
introduced by \cite{chazal2011geometric},
and the kernel distance, introduced by \cite{phillips2014goemetric},
are smooth functions that 
provide useful topological information 
but are robust
to noise and outliers.
\cite{massart2014} derived concentration bounds
for DTM.
Building on these results, we derive
limiting distributions and confidence sets, and 
we propose a method for choosing tuning parameters.
\end{abstract}


\begin{keyword}
\kwd{persistent homology}
\kwd{topology}
\kwd{density estimation.}
\end{keyword}

\end{frontmatter}

\section{Introduction}

Figure~\ref{fig:introExample}
shows three complex point clouds,
based on a model used for simulating
cosmology data.
Visually, the three samples look very similar.
Below the data plots are the
persistence diagrams, which are summaries
of topological features defined in Section 2.
The persistence diagrams make it clearer
that the third data set is from a different data generating process
than the first two.

This is an example of how
topological features can summarize structure in point clouds.
The field of topological data analysis (TDA)
is concerned with defining such
topological features;
see \cite{carlsson2009topology}.
When performing TDA,
it is important to use topological measures that are
robust to noise.
This paper explores some of these robust topological measures.

Let $P$ be a distribution with compact support $S \subset \R^{\dimension}$.
One way to describe the shape of $S$ is by using homology.
Roughly speaking, the homology of $S$ measures the topological features of $S$,
such as the connected components, the holes, and the voids.
A more nuanced way to describe the shape of $S$ is using persistent homology,
which is a multiscale version of homology.
To describe persistent homology,
we begin with the distance function
$\Delta_S \colon \R^d \to \R$ for $S$ which is defined by
\begin{equation}
\Delta_S(x) = \inf_{y\in S} \Vert x-y \Vert.
\end{equation}
The sublevel sets
$L_t=\{ x:\ \Delta_S(x) \leq t\}$
provide multiscale topological information about $S$.
As~$t$ varies from zero to $\infty$,
topological features --- connected components, loops, voids ---
are born and die.
Persistent homology
quantifies the evolution of these topological features 
as a function of $t$.
See Figure~\ref{fig:circle}.
Each point on the persistence diagram represents the birth and death time
of a topological feature.

Given a sample
$X_1,\ldots, X_n\sim P$, the empirical distance function
is defined by
\begin{equation}
\hat \Delta(x) = \min_{X_i}\Vert x-X_i \Vert.
\end{equation}
If $P$ is supported on $S$,
and has a density bounded away from zero and infinity,
then $\hat \Delta$ is a consistent estimator of $\Delta_S$, i.e.,
$\sup_x |\hat \Delta(x) -\Delta_S(x)|\stackrel{P}{\to} 0.$
However, if there are outliers,
or noise,
then
$\hat \Delta(x)$ is no longer consistent.
Figure~\ref{fig:cassini} (bottom) shows that a few outliers
completely change the distance function.
In the language of robust statistics,
the empirical distance function has breakdown point~zero.

A more robust approach is to
estimate the persistent homology
of the super-level sets of the density $p$ of $P$.
As long as $P$ is concentrated near $S$,
we expect the level sets of $p$
to provide useful topological information about $S$.
Specifically, some level sets of $p$ are homotopic to~$S$ under weak 
conditions, and this implies that we can estimate the homology of $S$.
Note that, in this case, we are using the persistent homology of the 
super-level sets of $p$,
to estimate the homology of $S$.
This is the approach suggested by
\cite{bubenik2012statistical},
\cite{fasy2014statistical} and 
\cite{bobrowski2014topological}.
A related idea is to use persistent homology based on a kernel distance
\citep{phillips2014goemetric}.
In fact, the sublevel sets of the kernel distance are a rescaling
of the super-level sets of $p$,
so these two ideas are essentially equivalent.
We discuss this approach in Section \ref{section::kernels}.

A different approach, more closely related to the distance function,
but robust to noise, is to use the {\em distance-to-a-measure (DTM)},
$\dtm \equiv \dtm_{P,m}$, from \cite{chazal2011geometric}; see
Section~\ref{section::background}.  An estimate $\edtm$ of $\dtm$ is
obtained by replacing the true probability measure with the empirical
probability measure $P_n$, or with a deconvolved version of the
observed measure \cite{caillerie2011deconvolution}.  One then
constructs a persistence diagram based on the sub-level sets of the
DTM.  See Figure~\ref{fig:introExample}.  This approach is aimed at
estimating the persistent homology of $S$.  (The DTM also suggests new
approaches to density estimation; see \cite{biau2011weighted}.)

The density estimation approach and the DTM are both trying to
probe the topology of $S$.
But the former is using persistent homology to estimate the homology of $S$,
while
the DTM is directly trying to estimate the persistent homology of $S$.
We discuss this point in detail in 
Section \ref{sec::compare}.

\begin{figure}[!ht]
\begin{center}
\includegraphics[scale=0.31]{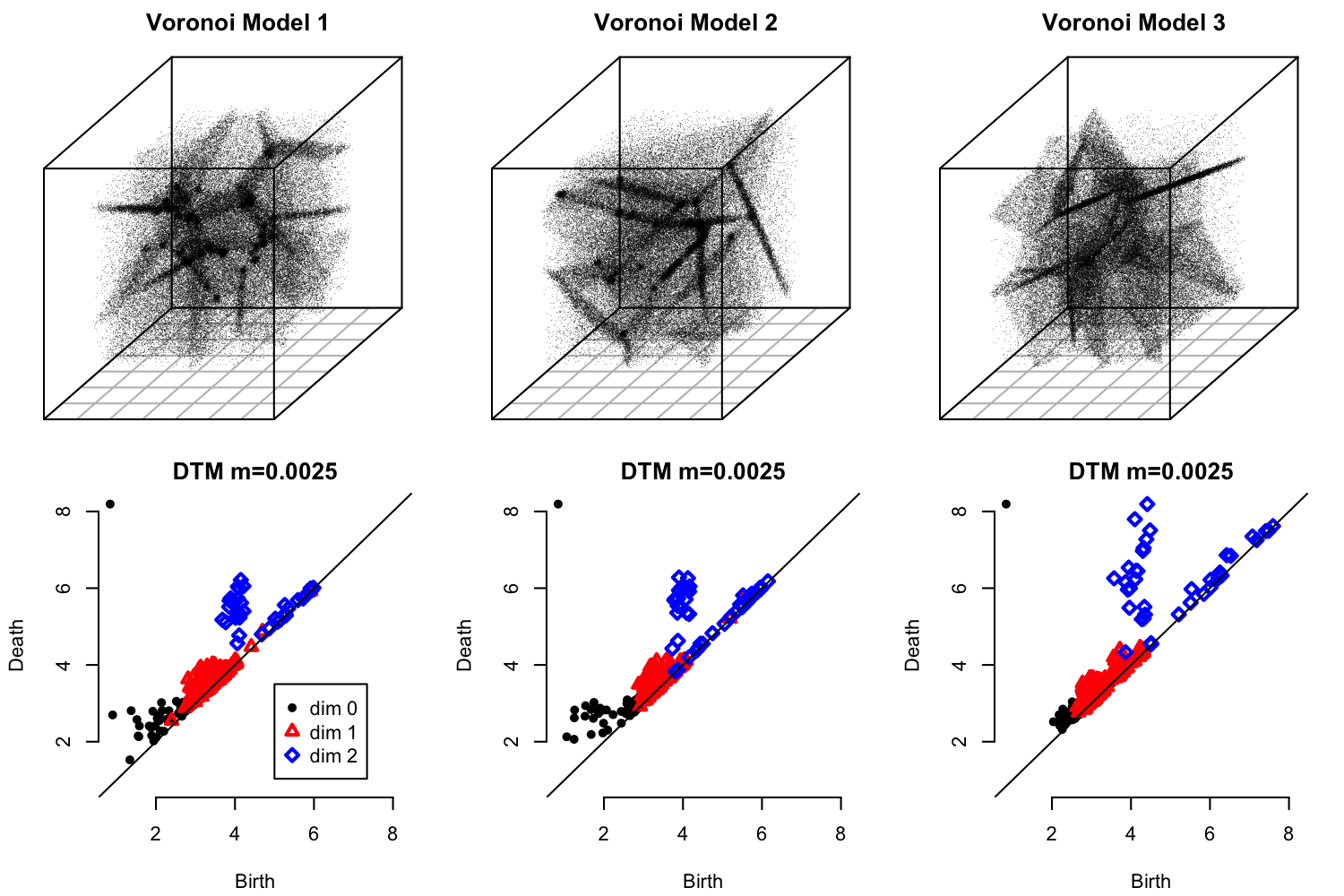}
\end{center}
\caption{The first two datasets come from the same data generating
  mechanism. In the third one, the particles are more concentrated
  around the walls of the Voronoi cells. Although the difference is
  not clear from the scatterplots, it is evident from the persistence
  diagrams of the sublevel sets of the distance-to-measure functions. 
  See Example \ref{ex:voronoi} for more details on the Voronoi Models.}
\label{fig:introExample}
\end{figure}

In this paper, we explore some statistical properties of these methods.
In particular:
\begin{enumerate}

\item We show that
$\sqrt{n}(\edtm^2(x)-\dtm^2(x))$ converges to a Gaussian process.
(\thmref{th:pointwiseDTM}).

\item We show that the bootstrap provides
asymptotically valid confidence bands for $\dtm$.
This allows us to 
identify significant
topological features.  
(\thmref{thm::limit}).

\item We find the limiting distribution of a key topological quantity called the
bottleneck distance. 
(\secref{ss:significance}).

\item We also show that, under additional assumptions, there is another version of the
bootstrap --- which we call the bottleneck bootstrap --- that provides more
precise inferences.  
(\secref{section::BB}).

\item We show similar results for the kernel distance.
(\secref{section::kernels}).

\item We propose a method for choosing the tuning parameter $m$
for DTM and the bandwidth~$h$ for the kernel distance.
(\secref{section::choosing}).

\item We show that the DTM and the KDE both suffer from boundary bias
and we suggest a method for reducing the bias.
(\secref{ss::bias}).

\end{enumerate}

{\em Notation.}
$B(x, \epsilon)$ is a Euclidean ball of radius $\epsilon$, centered at $x$. 
We define
$A\oplus\epsilon = \bigcup_{x\in A}B(x,\epsilon)$, the union of
$\epsilon$-balls centered at points in $A$.
If $x$ is a vector then
$||x||_\infty = \max_j |x_j|$.
Similarly, if $f$ is a real-valued fiction then
$||f||_\infty = \sup_x |f(x)|$.
We write
$X_n \rightsquigarrow X$ to mean that
$X_n$ converges in distribution to $X$, and we use symbols like
$c,C,\ldots,$ as generic positive constants.

{\bf Remark:}
The computing for the examples in this paper were done using the \textsf{R}
package \textbf{TDA}. See \cite{fasy2014introduction}. 
The package can be downloaded from
\url{http://cran.r-project.org/web/packages/TDA/index.html}.

{\bf Remark:}
In this paper, we discuss the DTM which uses a smoothing parameter $m$
and the kernel density estimator which uses a smoothing bandwidth $h$.
Unlike in traditional function estimation,
we do not send these parameters to zero as $n$ increases.
In TDA, the topological features created with a fixed smoothing parameter are of
interest.
Thus, all the theory in this paper treats the smoothing parameters as being bounded away from 0.
See also Section 4.4 in \cite{fasy2014statistical}.
In Section \ref{section::choosing}, we discuss the choice of these smoothing parameters.

\section{Background}
\label{section::background}

In this section, we define several distance functions and distance-like 
functions, and we introduce the relevant concepts from computational 
topology.  For more detail,
we refer the reader to 
\cite{edelsbrunner2010computational}.

\subsection{Distance Functions and Persistent Homology}

Let $S \subset\mathbb{R}^d$ be a compact set.
The {\em homology} of $S$ characterizes certain topological features of~$S$,
such
as its connected components, holes, and voids.
\emph{Persistent homology} is a multiscale version of homology.
Recall that the distance function
$\Delta_S$ for $S$ is
\begin{equation}
\Delta_S(x) = \inf_{y\in S} \Vert x-y \Vert.
\end{equation}
Let
$L_t=\{ x:\ \Delta_S(x) \leq t\}$.
We will refer to the parameter $t$ as ``time.''

Given the nested family of the sublevel sets of $\Delta_S$, the topology of $L_t$ changes as $t$ increases: new connected components can appear, existing connected components can merge, cycles and cavities can appear or be filled, etc.
Persistent homology tracks these changes, identifies \emph{features} and associates an \emph{interval} or \emph{lifetime} (from $t_\textrm{birth}$ to $t_\textrm{death}$) to them. For instance, a connected component is a feature that is born at the smallest $t$ such that the component is present in $L_t$, and dies when it merges with an older connected component. 
Intuitively, the longer a feature persists, the more relevant it is.

A feature, or more precisely its lifetime, can be represented as a segment whose extremities
have abscissae $t_\textrm{birth}$ and $t_\textrm{death}$; the
set of these segments is called the {\em barcode} of $\Delta_S$. An interval
can also be represented as a point in the plane with coordinates $(u,v) = (t_\textrm{birth},t_\textrm{death})$.
The set of points (with multiplicity) representing the
intervals is called the {\em persistence diagram} of $\Delta_S$. Note that
the diagram is entirely contained in the half-plane above the diagonal
defined by $u=v$, since death always occurs after birth.
This diagram is well-defined for any compact set $S$ (\cite{chazal2012structure}, Theorem 2.22). 
The most persistent features (supposedly the most important) are those represented by the points furthest from the diagonal in the diagram, whereas points close to the diagonal can be interpreted as (topological) noise.


Figure~\ref{fig:circle} shows a simple example.
Here, the points on the circle are regarded as a subset of $\mathbb{R}^2$.
At time zero, there is one connected component and one loop.
As $t$ increases, the loop dies.

\begin{figure}[t!b!]
\begin{center}
\includegraphics[scale=0.89]{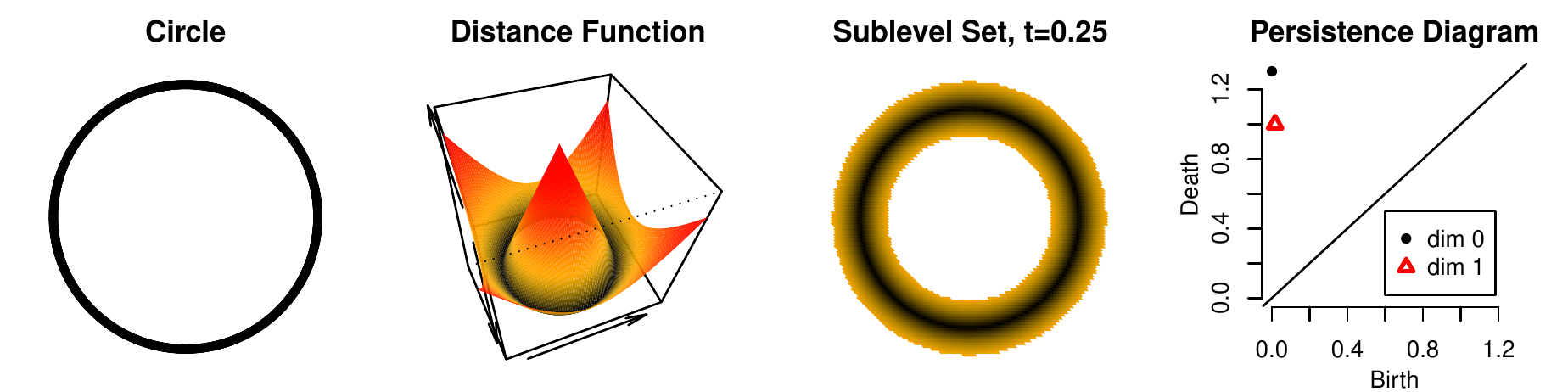}
\end{center}
\caption{The left plot shows a one-dimensional curve.
The second plot is the distance function.
The third plot shows a typical sublevel set of the distance function.
The fourth plot is the persistence diagram which shows the birth and death
times of loops (triangles) and connected components (points)
of the sublevel sets.}
\label{fig:circle}
\end{figure}

Let $S_1$ and
$S_2$ be compact sets with distance functions
$\Delta_1$ and $\Delta_2$ and diagrams
$D_1$ and $D_2$.
The bottleneck distance between
$D_1$ and $D_2$ is defined by
\begin{equation}
W_{\infty}(D_1,D_2) = \min_{g:\ D_1\to D_2}\ \sup_{z \in D_1}\Vert z-g(z) \Vert_\infty,
\end{equation}
where the minimum is over all bijections between $D_1$ and $D_2$.
In words, the bottleneck distance is the maximum distance between the points of
the two diagrams,
after minimizing over all possible pairings of the points (including the 
points on the diagonals).

A fundamental property of persistence diagrams is their \emph{stability}.
According to the Persistence Stability Theorem (\cite{steiner05stable,chazal2012structure})
\begin{equation}
W_{\infty}(D_1,D_2) \leq ||\Delta_1 - \Delta_2||_\infty = H(S_1,S_2).
\end{equation}
Here,
$H$ is the Hausdorff distance, namely,
$$
H(A,B) = \inf \Bigl\{ \epsilon:\ A \subset B \oplus \epsilon\ {\rm and}\ 
B \subset A \oplus \epsilon\Bigr\},
$$
where we recall that
$A \oplus \epsilon = \bigcup_{x\in A} B(x,\epsilon)$.
More generally, the definition of persistence diagrams and the above stability 
theorem do not restrict to distance functions but also extend to families of 
sublevel sets (resp. upper-level sets) of functions defined on $\mathbb{R}^d$ 
under very weak assumption. We refer the reader to 
\cite{edelsbrunner2010computational,ccggo-ppmtd-09,chazal2012structure} for a 
detailed exposition of the theory.

Given a sample
$X_1,\ldots, X_n\sim P$, the empirical distance function
is defined by
\begin{equation}
\hat \Delta(x) = \min_{X_i}\Vert x-X_i\Vert .
\end{equation}

\begin{lemma}[Lemma 4 in \citealp{fasy2014statistical}]
Suppose that $P$ is supported on $S$,
and has a density bounded away from zero and infinity.
Then
$$
\sup_x |\hat \Delta(x) -\Delta_S(x)|\stackrel{P}{\to} 0.
$$
\end{lemma}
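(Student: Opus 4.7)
The plan is to bound the uniform deviation by the Hausdorff distance between the sample and the support, and then show that this Hausdorff distance vanishes in probability by a compactness / covering argument using the fact that every point of $S$ lies in the support of $P$.

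First I would reduce the problem to a purely geometric one. Since $X_i \in S$ almost surely, we have $\hat\Delta(x) = \min_i \|x-X_i\| \geq \inf_{y\in S}\|x-y\| = \Delta_S(x)$, so the absolute value can be dropped. For the other direction, let $X_{i^\star(y)}$ denote the sample point closest to $y\in S$. Then
\begin{equation*}
\hat\Delta(x) \;\leq\; \|x - X_{i^\star(y)}\| \;\leq\; \|x-y\| + \|y - X_{i^\star(y)}\| \;\leq\; \|x-y\| + \sup_{y'\in S} d(y',\mathcal{X}_n),
\end{equation*}
where $\mathcal{X}_n = \{X_1,\dots,X_n\}$. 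Taking the infimum over $y\in S$ yields
\begin{equation*}
\sup_x \bigl|\hat\Delta(x) - \Delta_S(x)\bigr| \;\leq\; \sup_{y\in S} d(y,\mathcal{X}_n) \;=\; H(\mathcal{X}_n, S),
\end{equation*}
where the last equality uses $\mathcal{X}_n \subset S$. (This is just a special case of the general Lipschitz property $\|\Delta_A - \Delta_B\|_\infty = H(A,B)$ cited in the stability discussion above.)

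Next I would show $H(\mathcal{X}_n, S) \stackrel{P}{\to} 0$ by a standard covering argument. Fix $\varepsilon>0$. Because $S$ is compact, cover $S$ by finitely many balls $B(z_j,\varepsilon/2)$, $j=1,\dots,K_\varepsilon$, with centers $z_j \in S$. If some $y\in S$ satisfies $d(y,\mathcal{X}_n) > \varepsilon$, then $y$ lies in some $B(z_j,\varepsilon/2)$ and the triangle inequality forces $\mathcal{X}_n \cap B(z_j,\varepsilon/2) = \emptyset$. Hence, by a union bound,
\begin{equation*}
\P\bigl(H(\mathcal{X}_n,S) > \varepsilon\bigr) \;\leq\; \sum_{j=1}^{K_\varepsilon} \bigl(1 - P(B(z_j,\varepsilon/2))\bigr)^{n}.
\end{equation*}
Since the density of $P$ is bounded below by some $c_0>0$ on $S$, we have $P(B(z_j,\varepsilon/2)) \geq c_0\, \mathrm{vol}(B(z_j,\varepsilon/2)\cap S) =: p_j > 0$ for each $j$, and the minimum $p_\varepsilon = \min_j p_j$ is strictly positive. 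Therefore the right-hand side is at most $K_\varepsilon (1-p_\varepsilon)^{n} \to 0$ as $n\to\infty$.

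The only real subtlety is the step guaranteeing that $\mathrm{vol}(B(z_j,\varepsilon/2)\cap S)$ is strictly positive at every $z_j \in S$; this is immediate once we assume $S$ is the support of a distribution with a density bounded away from zero (so every ball around a support point has positive $P$-mass), and is really the only place where the hypothesis on the density enters. No sharper rate is being asserted, so once $H(\mathcal{X}_n,S)\to 0$ in probability is established, combining it with the deterministic bound from the first paragraph completes the proof.
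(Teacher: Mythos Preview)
The paper does not actually prove this lemma; it is quoted verbatim as Lemma~4 of \cite{fasy2014statistical} and followed only by a pointer to \cite{cuevas2004boundary}, so there is no in-paper argument to compare against. Your proof is correct and is exactly the standard route taken in those references: reduce $\sup_x|\hat\Delta(x)-\Delta_S(x)|$ to the one-sided Hausdorff distance $\sup_{y\in S}d(y,\mathcal X_n)$ via the identity $\|\Delta_A-\Delta_B\|_\infty=H(A,B)$, then kill that Hausdorff distance with a finite $\varepsilon$-cover of the compact $S$ and a union bound over the events that some covering ball is missed by the sample.

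One small simplification you could make in the last paragraph: you do not need to go through $\mathrm{vol}(B(z_j,\varepsilon/2)\cap S)$ at all. Since $z_j\in S$ and $S$ is the support of $P$, the definition of support already gives $P(B(z_j,\varepsilon/2))>0$ directly; the lower bound on the density is only needed if you want a quantitative rate (e.g.\ via the $(a,b)$-condition the paper introduces later). For the bare $o_P(1)$ statement asserted here, ``every ball around a support point has positive mass'' is the only fact required, and your argument then goes through without any appeal to Lebesgue volume.
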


See also \cite{cuevas2004boundary}.
The previous lemma justifies using $\hat \Delta$ to estimate the
persistent homology of sublevel sets of $\Delta_S$.
In fact, the sublevel sets of
$\hat \Delta$ are just unions of balls
around the observed data.
That is,
$$
L_t=\bigl\{x:\ \hat\Delta(x) \leq t\bigr\} = \bigcup_{i=1}^n B(X_i,t).
$$
The persistent homology of the union of the balls as $t$ increases
may be computed by creating a combinatorial representation 
(called a Cech complex)
of the union of 
balls, and then applying basic operations
from linear algebra \citep[Sections VI.2 and 
VII.1]{edelsbrunner2010computational}.

However,
as soon as there is noise or outliers,
the empirical distance function becomes useless, as illustrated in
Figure~\ref{fig:cassini}.
More specifically, suppose that
\begin{equation}\label{eq::model}
P = \pi R + (1-\pi) (Q\star \Phi_\sigma),
\end{equation}
where $\pi \in [0,1]$, 
$R$ is an outlier distribution (such as a uniform on a large set),
$Q$ is supported on $S$,
$\star$ denotes convolution,
and $\Phi_\sigma$ is a compactly supported noise distribution with scale parameter $\sigma$. 

Recovering the persistent homology of $\Delta_S$ exactly
(or even the homology of $S$)
is not possible in general since the problem is under-identified.
But we would still like to find a function that
is similar to the distance function for $S$.
The empirical distance function fails miserably
even when $\pi$ and $\sigma$ are small.
Instead, we now turn to the
DTM.

\begin{figure}[!ht]
\begin{center}
\includegraphics[scale=0.89]{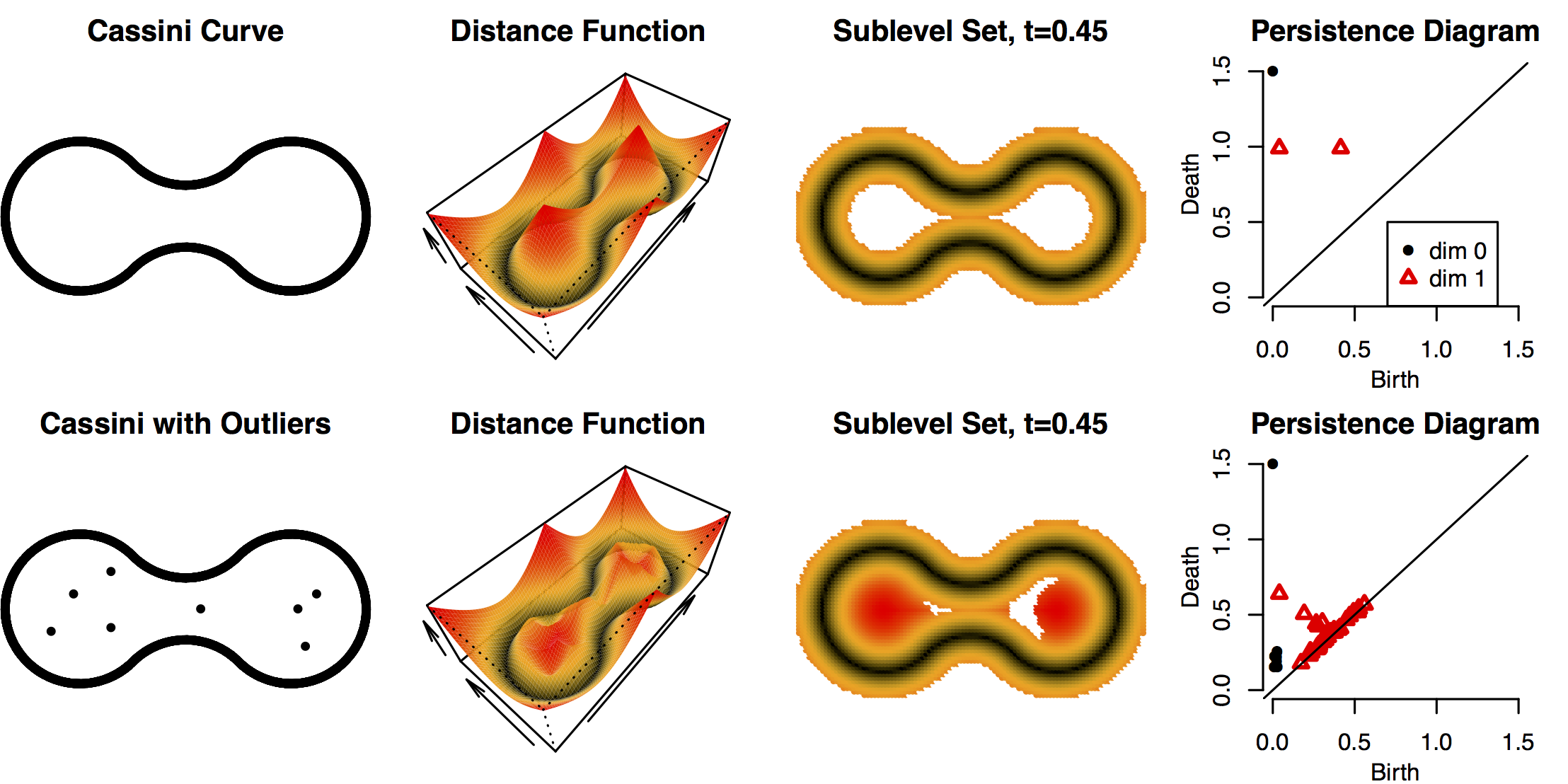}
\end{center}
\caption{Top: data on the Cassini curve, the distance function 
$\hat\Delta$,
a typical sublevel set $\{x:\ \hat\Delta(x)\leq t\}$ and the resulting 
persistence diagram.
Bottom: the effect of adding a few outliers.
Note that the distance function and persistence diagram
are dramatically different.}
\label{fig:cassini}
\end{figure}

\subsection{Distance to a Measure}

Given a probability measure $P$, 
for $0 < m < 1$,
the {\em distance-to-measure (DTM)} 
at resolution~$m$
\citep{chazal2011geometric}
is defined by
\begin{equation}
\dtm(x) \equiv \dtm_{P,m}(x)=\sqrt{ \frac{1}{m}\int_0^m ( G_x^{-1}(u))^2 du}=
\sqrt{ \mathbb{E}\left[ \Vert X-x \Vert^2\ \mathbbm{1}\Big( \Vert X-x \Vert \leq 
G_x^{-1}(m) \Big) \right] },
\label{eq:DTM}
\end{equation}
where
$G_x(t) = P( \Vert X-x \Vert \leq t)$.
Alternatively, the DTM can be defined using the cdf of the squared distances, as 
in the following lemma:
\begin{lemma}[\citealp{massart2014}]
\label{lemma::simplify}
Let $F_x(t)=P( \Vert X-x \Vert^2 \leq t)$.
Then
$$
\dtm_{P,m}^2(x) = \frac{1}{m}\int_0^m F_x^{-1}(u) du.
$$
\end{lemma}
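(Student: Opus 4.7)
The plan is to reduce the identity to a simple change-of-variables between the two cumulative distribution functions $G_x$ (the CDF of the distance $\|X-x\|$) and $F_x$ (the CDF of the squared distance $\|X-x\|^2$), and then conclude by matching their quantile functions.

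First I would observe that, since $\|X-x\| \geq 0$, for every $t \geq 0$
\begin{equation*}
F_x(t) = P(\|X-x\|^2 \leq t) = P(\|X-x\| \leq \sqrt{t}) = G_x(\sqrt{t}).
\end{equation*}
Next I would translate this into an identity between generalized (left-continuous) inverses. Using the definition $F_x^{-1}(u) = \inf\{t \geq 0 : F_x(t) \geq u\}$ and the substitution $t = s^2$ (which is a monotone bijection on $[0,\infty)$), I get
\begin{equation*}
F_x^{-1}(u) = \inf\{s^2 : s \geq 0,\ G_x(s) \geq u\} = \bigl(\inf\{s \geq 0 : G_x(s) \geq u\}\bigr)^2 = \bigl(G_x^{-1}(u)\bigr)^2,
\end{equation*}
where the inf commutes with the squaring because $s \mapsto s^2$ is continuous and nondecreasing on $[0,\infty)$ and the set over which the inf is taken is a subset of $[0,\infty)$.

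Finally I would plug this into the definition \eqref{eq:DTM} of $\dtm$: squaring both sides of $\dtm(x) = \sqrt{\tfrac{1}{m}\int_0^m (G_x^{-1}(u))^2\,du}$ and replacing the integrand by $F_x^{-1}(u)$ yields exactly $\dtm_{P,m}^2(x) = \tfrac{1}{m}\int_0^m F_x^{-1}(u)\,du$, which is the claim.

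I do not expect any real obstacle here; the only subtle point is justifying the commutation of the infimum with the squaring when $G_x$ is not strictly increasing or not continuous, but this is standard since $s \mapsto s^2$ is a monotone homeomorphism of $[0,\infty)$ onto itself. The alternative representation of $\dtm^2(x)$ as an expectation (the second equality in \eqref{eq:DTM}) is not needed for this lemma.
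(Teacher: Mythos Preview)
Your proposal is correct and follows essentially the same approach as the paper: both arguments establish the pointwise identity $(G_x^{-1}(u))^2 = F_x^{-1}(u)$ by manipulating the infimum defining the generalized inverse under the monotone map $s\mapsto s^2$, and then substitute into the definition of $\dtm^2$. The paper's version is slightly terser and does not explicitly discuss the commutation of the infimum with squaring, which you rightly flag as the only point needing care.
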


\begin{proof}
For any $0 < u < 1$,
\begin{align*}
\left[ G_x^{-1}(u) \right]^2 &= \inf \left\{ t^2: G_x(t) \geq u \right\} =
\inf \left\{ t^2: P( \Vert X-x \Vert \leq t) \geq u \right\} \\
&= \inf \left\{ t: P( \Vert X-x \Vert^2 \leq t) \geq u \right\} =
\inf \left\{ t: F_x(t) \geq u \right\} = F_x^{-1}(u).
\end{align*}
Therefore
$$
\dtm_{P,m}^2(x) = \frac{1}{m}\int_0^m ( G_x^{-1}(u))^2 du = \frac{1}{m}\int_0^m
F_x^{-1}(u) du.
$$
\end{proof}

Given a sample
$X_1,\ldots, X_n\sim P$, 
let $P_n$ be the probability measure that puts mass $1/n$ on each~$X_i$. It is
easy to see that the distance to the measure $P_n$ 
at resolution $m$ is
\begin{equation}
\hat \dtm^2(x) \equiv \dtm_{P_n,m}^2(x) = \frac{1}{k} \sum_{X_i \in N_k(x)}
\Vert X_i-x \Vert^2,
\end{equation}
where $k= \lceil mn \rceil$ and $N_k(x)$ is the 
set containing the $k$ nearest neighbors 
of $x$ among $X_1, \ldots, X_n$.
We will use $\edtm$ to estimate $\dtm$.

Now we summarize some important properties of the DTM, all of which are proved
in \cite{chazal2011geometric}.
First, recall that the {\em Wasserstein distance of order $p$}
between two probability measures $P$ and $Q$ is given by
\begin{equation}
W_p(P,Q) = \inf_J \left(\int \Vert x-y \Vert^p \,dJ(x,y)\right)^{1/p},
\end{equation}
where the infimum is over all
joint distributions $J$ for
$(X,Y)$ such that
$X\sim P$ and $Y\sim Q$.
We say that $P$ satisfies the {\em $(a,b)$-condition} if
there exist $a,b>0$ such that,
for every $x$ in the support of $P$ and every $\epsilon>0$,
\begin{equation}\label{eq::ab}
P\Bigl(B(x,\epsilon)\Bigr)\geq a \epsilon^b.
\end{equation}

The next theorem summarizes results from \cite{chazal2011geometric} and \cite{buchet2013efficient}.
\begin{theorem}[Properties of DTM]
The following properties hold:
\begin{enumerate}
\item The distance to measure is 1-Lipschitz:  for any probability measure $P$
on $\R^d$ and any~$(x,x') \in \R^d$, 
$$ | \delta_{P,m}(x) - \delta_{P,m}(x')  \leq \| x-x'\|.$$
\item If $P$ satisfies (\ref{eq::ab}) and is supported on a compact set $S$,
then
\begin{equation}
\sup_x| \dtm_{P,m}(x) - \Delta_S(x)| \leq a^{-1/b} m^{1/b}.
\end{equation}
In particular,
$\sup_x| \dtm_{P,m}(x) - \Delta_S(x)|\to 0$ as $m\to 0$.
\item If $P$ and $Q$ are two distributions, then
\begin{equation}
\sup_x| \dtm_{P,m}(x) - \dtm_{Q,m}(x)| \leq \frac{1}{\sqrt{m}}W_2(P,Q).
\end{equation}
\item If $Q$ satisfies (\ref{eq::ab}) and is supported on a compact set $S$ 
and $P$ is another distribution (not necessarily supported on $S$),
then
\begin{equation}
\sup_x| \dtm_{P,m}(x) - \Delta_S(x)| \leq a^{-1/b} m^{1/b} + 
\frac{1}{\sqrt{m}}W_2(P,Q)
\end{equation}
Hence, if $m\asymp W_2(P,Q)^{2b/(2+b)}$, then
$\sup_x| \dtm_{Q,m}(x) - \Delta_S(x)| =O( W_2(P,Q)^{2/(2+b)})$.
\item 
Let $D_P$ be the diagram from $\dtm_{P,m}$ and let
$D_Q$ be the diagram from $\dtm_{Q,m}$.
The bottleneck distance is bounded by
\begin{equation}
W_{\infty}\left( D_{P},D_{Q} \right) \leq || \dtm_{P,m} - \dtm_{Q,m}||_\infty.
\end{equation}
\end{enumerate}
\end{theorem}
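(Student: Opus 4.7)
The theorem collects five standard properties of the distance-to-measure, all proved in the cited references, so my plan is to sketch the main ideas rather than reproduce every calculation. The unifying tool is the sub-measure reformulation
\[
\dtm_{P,m}^{2}(x) \;=\; \inf\!\left\{ \tfrac{1}{m}\!\int \|x-y\|^{2}\,d\mu(y) \,:\, \mu \le P,\; \mu(\R^{d})=m\right\},
\]
which follows from Lemma~\ref{lemma::simplify} by realizing $F_x^{-1}$ via the truncation $\mathbbm{1}\{\|X-x\|^{2}\le F_x^{-1}(m)\}$ (with at most a boundary correction on the level set). Granting this, Property~1 is immediate: if $\mu_{x'}$ achieves the infimum at $x'$, then using $\mu_{x'}$ as a feasible sub-measure at $x$ and applying Minkowski in $L^{2}(\mu_{x'}/m)$ gives $\dtm(x)\le \dtm(x')+\|x-x'\|$, and symmetry finishes.

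For Property~2 I would split the bound. Since $P$ is supported on $S$, $\|X-x\|\ge \Delta_{S}(x)$ a.s., so $G_{x}^{-1}(u)\ge \Delta_{S}(x)$ and hence $\dtm(x)\ge \Delta_{S}(x)$. For the reverse inequality, pick $s\in S$ with $\|x-s\|=\Delta_{S}(x)$; Property~1 gives $\dtm(x)\le \Delta_{S}(x)+\dtm(s)$, and the $(a,b)$ condition yields $G_{s}(t)\ge a t^{b}$, so $G_{s}^{-1}(u)\le (u/a)^{1/b}$ and a direct integration of (\ref{eq:DTM}) gives $\dtm(s)\le a^{-1/b}m^{1/b}$. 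For Property~3 I would take an optimal coupling $J$ achieving $W_{2}(P,Q)$, let $\mu_{x}$ be optimal for $P$ at $x$, and push $\mu_{x}$ through $J$ to obtain a sub-measure $\nu_{x}\le Q$ of mass $m$; comparing $\dtm_{Q}(x)^{2}\le \tfrac1m\!\int\|x-y\|^{2}\,d\nu_{x}$ to $\dtm_{P}(x)^{2}=\tfrac1m\!\int\|x-y'\|^{2}d\mu_{x}$ via Minkowski on the coupling produces the $W_{2}(P,Q)/\sqrt{m}$ slack. Swapping $P$ and $Q$ symmetrizes.

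Property~4 is now just the triangle inequality $|\dtm_{P,m}-\Delta_{S}|\le |\dtm_{P,m}-\dtm_{Q,m}|+|\dtm_{Q,m}-\Delta_{S}|$ combined with Properties~2 and~3; minimizing $a^{-1/b}m^{1/b}+W_{2}(P,Q)/\sqrt{m}$ in $m$ balances the exponents at $m\asymp W_{2}(P,Q)^{2b/(2+b)}$, yielding rate $W_{2}(P,Q)^{2/(2+b)}$. Property~5 then reduces to the Persistence Stability Theorem already quoted in the excerpt: the DTM is continuous (in fact $1$-Lipschitz by Property~1) and tame enough that its sublevel-set filtration has a well-defined persistence diagram, so $W_{\infty}(D_{P},D_{Q})\le \|\dtm_{P,m}-\dtm_{Q,m}\|_{\infty}$ is a direct application of that theorem.

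The main obstacle, and the only step where I expect real work, is justifying the sub-measure representation and carrying out the pushforward in Property~3 rigorously: one must define the ``mass $m$ truncation'' of $P$ in a measurable way (handling ties on the level set $\{t : F_{x}(t)=m\}$ by a convex combination), and verify that the pushforward of such a truncation under the optimal $(P,Q)$-coupling still lies below $Q$. Once that measure-theoretic bookkeeping is in place, Properties 1, 3, and hence~4 follow from Minkowski, while Properties~2 and~5 are essentially one-line consequences of the $(a,b)$ condition and the stability theorem respectively.
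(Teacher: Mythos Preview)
The paper does not prove this theorem at all: it is stated as a summary of results from \cite{chazal2011geometric} and \cite{buchet2013efficient}, with no proof given. Your sketch is a correct outline of precisely the arguments found in those references---the sub-measure variational characterization, the Minkowski/coupling argument for Property~3, the triangle-inequality deduction of Property~4, and the direct appeal to persistence stability for Property~5---so there is nothing to compare beyond noting that you have supplied what the paper deliberately omitted.
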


For any compact set $A\subset \mathbb{R}^d$, let $r(A)$ denotes the radius of the smallest enclosing ball of $A$
centered at zero:
$$ r (A ) = \inf \left\{ r >0  : A \subset B(0,r) \right\} .$$ 
We conclude this section by bounding the distance between the diagrams
$D_{\dtm_{P,m}}$ and $D_{\Delta_S}$.
\begin{lemma}[Comparison of Diagrams]
Let
$P = \pi R + (1-\pi) (Q\star \Phi_\sigma)$
where $Q$ is supported on $S$ and  satisfies (\ref{eq::ab}),
$R$ is uniform on a compact set $A\subset \mathbb{R}^d$ and
$\Phi_\sigma = N(0,\sigma^2 I)$.
Then,
$$
W_{\infty} \left( D_{\dtm_{P,m}},D_{\Delta_S} \right) \leq
a^{-1/b} m^{1/b} + \frac{ \pi  \sqrt{   r(A) ^2   +  2 r(S) ^2 + 2 \sigma^2     }   +  \sigma }{\sqrt{m}}.
$$
\end{lemma}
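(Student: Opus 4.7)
My plan chains three reductions. First, since $\dtm_{P,m}$ and $\Delta_S$ are both $1$-Lipschitz distance-like functions on $\R^d$, the Persistence Stability Theorem stated in Section~\ref{section::background} yields
\[
W_\infty\bigl(D_{\dtm_{P,m}},D_{\Delta_S}\bigr) \;\leq\; \bigl\|\dtm_{P,m}-\Delta_S\bigr\|_\infty.
\]
Next, since $Q$ is supported on $S$ and satisfies the $(a,b)$-condition, part~4 of the preceding Properties-of-DTM theorem gives
\[
\bigl\|\dtm_{P,m}-\Delta_S\bigr\|_\infty \;\leq\; a^{-1/b}\, m^{1/b} + \frac{1}{\sqrt{m}}\,W_2(P,Q).
\]
This immediately accounts for the leading $a^{-1/b} m^{1/b}$ summand in the claimed bound, so the remainder of the argument reduces to showing $W_2(P,Q)\leq\pi\sqrt{r(A)^2+2r(S)^2+2\sigma^2}+\sigma$.

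For this transport estimate I would split at the noise layer via the triangle inequality
\[
W_2(P,Q) \;\leq\; W_2\bigl(P,Q\star\Phi_\sigma\bigr) + W_2\bigl(Q\star\Phi_\sigma,Q\bigr).
\]
The second summand is handled by the natural convolutional coupling $(Y,Y+Z)$ with $Y\sim Q$ and $Z\sim\Phi_\sigma$ independent, giving $W_2(Q\star\Phi_\sigma,Q)\leq\sqrt{\mathbb{E}\|Z\|^2}=\sigma$ under the paper's scaling convention. For the first summand I would use the coupling that keeps the weight-$(1-\pi)$ copy of $Q\star\Phi_\sigma$ inside $P$ fixed at zero cost and transports only the weight-$\pi$ outlier part $R$ onto $Q\star\Phi_\sigma$. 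Expanding $\|X-(Y'+Z)\|^2$ for mutually independent $X\sim R$, $Y'\sim Q$, $Z\sim\Phi_\sigma$ and using $\mathbb{E}Z=0$ kills the cross term in $Z$, after which the bounds $\mathbb{E}\|X\|^2\leq r(A)^2$, $\mathbb{E}\|Y'\|^2\leq r(S)^2$, and $\mathbb{E}\|Z\|^2\leq\sigma^2$ combine to produce the combination $r(A)^2+2r(S)^2+2\sigma^2$ inside the square root.

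The step I expect to be most delicate is extracting a linear-in-$\pi$ prefactor rather than the $\sqrt{\pi}$ one would get from a naive application of the convexity inequality $W_2^2(\alpha\mu_1+(1-\alpha)\mu_2,\nu)\leq\alpha W_2^2(\mu_1,\nu)+(1-\alpha)W_2^2(\mu_2,\nu)$ to the mixture $P=\pi R+(1-\pi)(Q\star\Phi_\sigma)$. Obtaining the factor $\pi$ out front requires exploiting the exact identity $P-(1-\pi)(Q\star\Phi_\sigma)=\pi R$: under the coupling above only a $\pi$-fraction of mass is actually displaced, so the cost scales as $\pi\cdot C$ inside $W_2^2$, and a coordinated use of $\sqrt{a+b}\leq\sqrt{a}+\sqrt{b}$ together with $\sqrt{\pi}\leq 1$ then redistributes the square root so that $\pi$ survives outside. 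The remaining bookkeeping with the constants $1,2,2$ in front of $r(A)^2$, $r(S)^2$, $\sigma^2$ is routine given the independence of $X$, $Y'$, and $Z$ and the fact that $A\subset B(0,r(A))$ and $S\subset B(0,r(S))$ by the definition of $r(\cdot)$.
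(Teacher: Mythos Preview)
Your architecture is the paper's: persistence stability plus part~4 of the DTM theorem reduces everything to bounding $W_2(P,Q)$; then the triangle inequality splits this into $W_2(P,Q\star\Phi_\sigma)+W_2(Q\star\Phi_\sigma,Q)$; the convolutional coupling handles the second piece; and a mixture coupling handles the first. The paper writes that coupling as $V=ZX+(1-Z)Y$ with $Z\sim\mathrm{Bernoulli}(\pi)$, $X\sim R$, $Y\sim Q\star\Phi_\sigma$ independent, which is exactly your ``keep the $(1-\pi)$ copy fixed'' plan.

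The gap is the $\pi$-versus-$\sqrt\pi$ step, and your proposed repair does not work. From $V-Y=Z(X-Y)$ one gets $W_2^2(P,Q\star\Phi_\sigma)\leq\E[Z^2]\,\E\|X-Y\|^2=\pi\,\E\|X-Y\|^2$, hence $W_2\leq\sqrt\pi\,\sqrt{\E\|X-Y\|^2}$. Since $\sqrt\pi\geq\pi$ on $[0,1]$, no combination of $\sqrt{a+b}\leq\sqrt a+\sqrt b$ and $\sqrt\pi\leq1$ can upgrade the prefactor from $\sqrt\pi$ to $\pi$; the desired inequality points the wrong way. The obstruction is genuine: with $R=\delta_1$ and $Q\star\Phi_\sigma=\delta_0$ one has $W_2(P,Q\star\Phi_\sigma)=\sqrt\pi$, strictly larger than $\pi$. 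In fact the paper's own proof commits precisely this slip, passing from $\E|Z|^2$ in one line to $\pi^2$ in the next, even though $\E Z^2=\pi$ for a Bernoulli variable. So your argument (and the paper's) is valid for the bound with $\sqrt\pi$ replacing $\pi$, but the linear-in-$\pi$ prefactor stated in the lemma is not obtainable from this coupling.
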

\begin{proof}
We first apply the stability theorem and part 3 in the previous result:
\begin{eqnarray*}
W_{\infty} \left( D_{\dtm_{P,m}},D_{\Delta_S} \right) &\leq & a^{-1/b} m^{1/b} + 
\frac{1}{\sqrt{m}}W_2(P,Q).
\end{eqnarray*}
The term $W_2(P,Q)$ can be upper bounded as follows:
\begin{equation*} \label{W2PQ}
W_2(P,Q) \leq W_2\left(P , Q\star\Phi_\sigma\right) + W_2\left( Q \star\Phi_\sigma, Q\right) 
\end{equation*}
These two terms can be bounded with simple transport plans. Let $Z$ be a
Bernoulli random variable with parameter $\pi$. Let $X$ and $Y$ be random
variables with distributions $R$ and $Q\star\Phi_\sigma$. We take these three
random variables independent. Then, the random variable $V$ defined by $V = Z X
+ (1-Z) Y$ has for distribution the mixture distribution $P$. By definition of
$W_2$, one has 
\begin{eqnarray*}
  W_2 ^2 \left(P  , Q\star\Phi_\sigma\right)  &\leq  & \E  \left(  \|V -  Y \|^2 \right) \\
& \leq  &   \E  \left(  | Z | ^2 \right)   \E  \left(    \| X -  Y \|^2 \right),
\end{eqnarray*}
by definition of $V$ and by independence of $Z$ and $X-Y$. Next, we have  $ \E  \left(    \| X  \|^2 \right) \leq r(A) ^2 $ and  $ \E  \left(    \| Y  \|^2 \right)  \leq 2 [r(S) ^2 + \sigma^2]$.  Thus
\begin{eqnarray*}
  W_2 ^2 \left(P  , Q\star\Phi_\sigma\right)  &\leq  &  \pi^2    \left(  r(A) ^2     +  2 r(S) ^2 + 2 \sigma^2  \right).
\end{eqnarray*}
It can be checked in a similar way that $W_2\left( Q \star\Phi_\sigma, Q\right)  \leq \sigma $ (see for instance the proof of Proposition 1 in \cite{caillerie2011deconvolution}) and the Lemma is proved.
\end{proof}

\section{Limiting Distribution of the Empirical DTM}
In this section, we find the limiting distribution of
$\edtm$ and we use this to find confidence bands for $\dtm(x)$. We start with the pointwise limit. 

Let $\dtm(x) \equiv \dtm_{P,m}(x)$ and $\edtm(x) \equiv \dtm_{P_n,m}(x)$, 
as defined in the previous section. 
\begin{theorem}[Convergence to Normal Distribution]
\label{th:pointwiseDTM}
Let $P$ be some distribution in $\R^d$. For some fixed $x$, assume that $F_x$ is differentiable at $F_x^{-1}(m)$, for $m \in
(0,1)$, with positive derivative $F_x'(F_x^{-1}(m))$.  Then we have
\begin{equation}
\sqrt{n}(\edtm^2(x) - \dtm^2(x))\rightsquigarrow N(0,\sigma^2_x),
\end{equation}
where 
$$
\sigma_x^2 = \frac{1}{m^2} \int_0^{F_x^{-1}(m)} \int_0^{F_x^{-1}(m)} 
[F_x(s \wedge t) - F_x(s) F_x(t)] \; ds \; dt.
$$
\end{theorem}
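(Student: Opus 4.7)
The plan is to recognize $\dtm^2(x)$ as a Hadamard-differentiable functional of the CDF $F_x$ of the squared distance $Y = \|X - x\|^2$, and then apply the functional delta method to the empirical process of $Y_1, \ldots, Y_n$, where $Y_i = \|X_i - x\|^2$. By Lemma~\ref{lemma::simplify}, $\dtm^2(x) = \psi(F_x)$ with
$$\psi(F) := \frac{1}{m}\int_0^m F^{-1}(u)\, du.$$
A short calculation with order statistics shows $\edtm^2(x) = \psi(\hat F_x) + r_n$, where $\hat F_x$ is the empirical CDF of the $Y_i$'s and $|r_n| = O(1/n) = o_P(n^{-1/2})$; this remainder absorbs the discrepancy between $k/n = \lceil mn\rceil/n$ and $m$. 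Hence the asymptotics of $\sqrt{n}(\edtm^2(x) - \dtm^2(x))$ coincide with those of $\sqrt{n}(\psi(\hat F_x) - \psi(F_x))$.

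Next, I would invoke the empirical-process CLT: $\sqrt{n}(\hat F_x - F_x) \rightsquigarrow \mathbb{B}\circ F_x$ in $\ell^\infty(\R)$, where $\mathbb{B}$ is a standard Brownian bridge on $[0,1]$. The classical quantile-map lemma (e.g., Lemma~21.3 in van~der~Vaart, \emph{Asymptotic Statistics}) shows that $F \mapsto F^{-1}$ is Hadamard differentiable at $F_x$ at the level $u = m$ precisely under the hypothesis that $F_x$ is differentiable at $F_x^{-1}(m)$ with positive derivative---which is exactly the standing assumption. Hadamard differentiability of $\psi$ follows, with derivative
$$d\psi_{F_x}(h) \;=\; -\frac{1}{m}\int_0^m \frac{h(F_x^{-1}(u))}{F_x'(F_x^{-1}(u))}\, du.$$
The functional delta method then gives
$$\sqrt{n}(\edtm^2(x) - \dtm^2(x)) \;\rightsquigarrow\; Z \;:=\; -\frac{1}{m}\int_0^m \frac{\mathbb{B}(u)}{F_x'(F_x^{-1}(u))}\, du,$$
a centered Gaussian random variable.

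The variance of $Z$ is
$$\operatorname{Var}(Z) \;=\; \frac{1}{m^2}\int_0^m\int_0^m \frac{u\wedge v - uv}{F_x'(F_x^{-1}(u))\,F_x'(F_x^{-1}(v))}\,du\,dv,$$
and the change of variables $u = F_x(s)$, $v = F_x(t)$ (with Jacobian $F_x'(s)F_x'(t)\,ds\,dt$) exactly cancels the denominators and converts the domain to $[0, F_x^{-1}(m)]^2$, yielding the stated formula for $\sigma_x^2$. The main technical obstacle is the Hadamard-differentiability step, particularly the care required at the boundary level $u = m$: positivity of $F_x'(F_x^{-1}(m))$ is essential to control $\hat F_x^{-1}(u)$ uniformly near the trimming level and to preserve the linearization through the endpoint. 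Once this is in hand, the reduction in Step~1, the empirical-process CLT in Step~2, and the closing variance computation are all routine.
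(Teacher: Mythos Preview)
Your overall strategy---recognize $\dtm^2(x)$ as a trimmed-mean functional of $F_x$ and apply the functional delta method---is sound and is closely related to what the paper does. But there is a genuine gap in the step where you pass from pointwise quantile differentiability to differentiability of $\psi$. The intermediate formula
\[
d\psi_{F_x}(h) \;=\; -\frac{1}{m}\int_0^m \frac{h\bigl(F_x^{-1}(u)\bigr)}{F_x'\bigl(F_x^{-1}(u)\bigr)}\, du
\]
requires $F_x$ to be differentiable with positive derivative at $F_x^{-1}(u)$ for (almost) every $u\in(0,m)$, whereas the theorem assumes this only at the single level $u=m$. Lemma~21.3 in van der Vaart gives Hadamard differentiability of the quantile map at one level; integrating those pointwise derivatives over $[0,m]$ does not by itself give the derivative of the integral functional without uniformity in $u$ that you have not assumed. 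The subsequent change of variables you perform to obtain $\sigma_x^2$ likewise presupposes that $F_x$ is absolutely continuous on $[0,F_x^{-1}(m)]$.

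The paper avoids this entirely by a direct ``horizontal versus vertical'' integration identity:
\[
\frac{1}{m}\int_0^m \sqrt{n}\bigl[\hat F_x^{-1}(u)-F_x^{-1}(u)\bigr]\,du
\;=\;
\frac{1}{m}\int_0^{F_x^{-1}(m)} \sqrt{n}\bigl[F_x(t)-\hat F_x(t)\bigr]\,dt \;+\; R_n,
\]
where $R_n$ is an integral over $[F_x^{-1}(m),\hat F_x^{-1}(m)]$. The differentiability assumption at the single point $m$ is used exactly once, to get $|\hat F_x^{-1}(m)-F_x^{-1}(m)|=O_P(n^{-1/2})$ and hence $R_n=o_P(1)$; the main term then converges directly by the empirical-process CLT, yielding the stated variance without any change of variables. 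This decomposition is, in effect, a self-contained proof that $\psi$ is Hadamard differentiable with derivative $h\mapsto -\tfrac{1}{m}\int_0^{F_x^{-1}(m)} h(s)\,ds$ under the minimal hypothesis---the very form you would reach after your change of variables, but without passing through the ill-defined intermediate expression. To repair your argument, either invoke Hadamard differentiability of the trimmed-mean functional directly (which is known under only the boundary condition; see, e.g., the L-statistics discussion in van der Vaart, Chapter~22) or adopt the paper's decomposition.
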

\begin{remark}
Note that assuming that $F_x$ is differentiable is not a strong assumption. According to the Lebesgue differentiation theorem on $\R$, it will be satisfied as soon as the push forward measure of $P$ by the function $\| x - \cdot\| ^2$ is absolutely continuous with respect to the Lebesgue measure on $\R$. \end{remark}




\begin{proof}
From Lemma \ref{lemma::simplify},
$$
\dtm^2(x) = \frac{1}{m}\int_0^m (G_x^{-1}(t))^2 dt = \frac{1}{m}\int_0^m 
F_x^{-1}(t) dt
$$
where
$G_x(t) = \mathbb{P}(\Vert X-x\Vert \leq t)$ and
$F_x(t) = \mathbb{P}(\Vert X-x\Vert ^2\leq t)$.
So
\begin{equation}\label{eq:integral}
\sqrt{n}(\edtm^2(x) - \dtm^2(x))=
\frac{1}{m}\int_0^m \sqrt{n}[\hat F_x^{-1}(t) - F_x^{-1}(t)] dt.
\end{equation}
First suppose that
$\hat F_x^{-1}(m) >  F_x^{-1}(m)$.
\begin{figure}[htb]
    \begin{center}
    \includegraphics[height=1.5in]{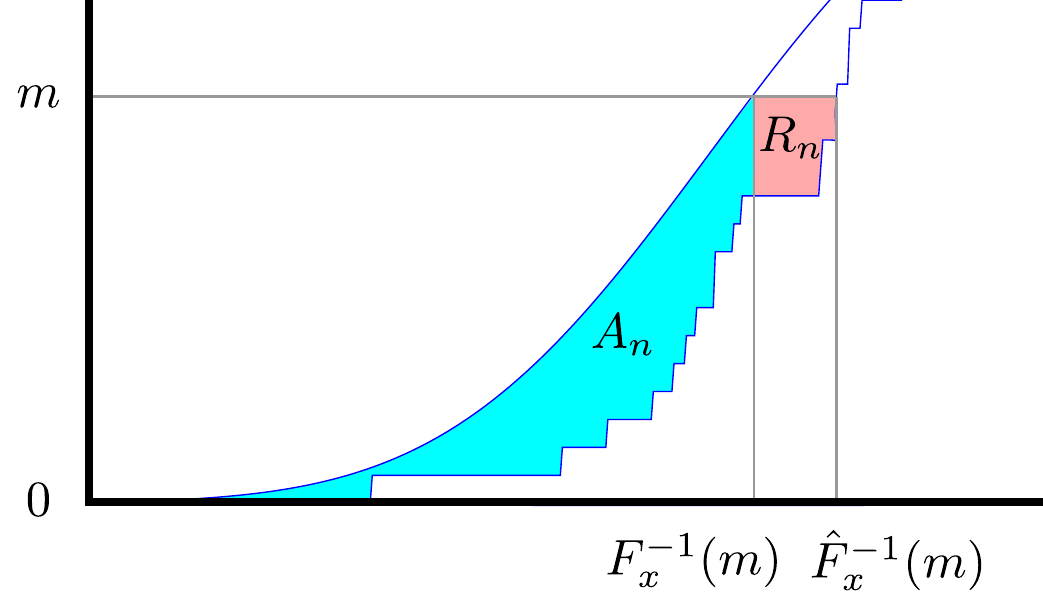}
    \end{center}
    \caption{The integral of \eqref{eq:integral} can be 
decomposed into two parts, $A_n$ and $R_n$.}
    \label{fig:cdf}
\end{figure}
Then, by integrating ``horizontally'' rather than ``vertically'',
we can split the integral into two parts, as illustrated 
in \figref{fig:cdf}:
\begin{align}
\frac{1}{m}\int_0^m \sqrt{n}[\hat F_x^{-1}(t) - F_x^{-1}(t)] dt &=
\frac{1}{m}\int_0^{F_x^{-1}(m)} \sqrt{n}[F_x(t) - \hat F_x(t)] dt +
\frac{1}{m}\int_{F_x^{-1}(m)}^{\hat F_x^{-1}(m)} \sqrt{n}[m - \hat F_x(t)] dt \notag \\
&\equiv
A_n(x) + R_n(x) \label{decompos:AnRn}
\end{align}
Next, it can be easily checked that (\ref{decompos:AnRn}) is also true
when $\hat F_x^{-1}(m) < F_x^{-1}(m)$ if we take $\int_a^b f(u) du := - \int_b^a f(u) du$ 
when $a > b$. Now, since $F_x$ is differentiable
at $m$, we have that $ \Bigl|F_x^{-1}(m) - \hat F_x^{-1}(m)\Bigr| =O_P(1/\sqrt{n})$, 
see for instance Corollary 21.5 in
\cite{van2000asymptotic}. According to the DKW inequality we have that
$\sup_t \left| F_x(t) - \hat F_x(t) \right| =O_P(\sqrt{1/n})$ and thus
$$
|R_n| \leq 
\frac{\sqrt{n}}{m}\ \Bigl|F_x^{-1}(m) - \hat F_x^{-1}(m)\Bigr|\  
\sup_t \left| F_x(t) - \hat F_x(t) \right|  = o_P(1).
$$

Next, note that $\sqrt{n}[F_x(t) - \hat F_x(t)] \rightsquigarrow 
\mathbb{B}(t)$, where $\mathbb{B}(t)$
is a Gaussian process with covariance function $[F_x(s \wedge t) - F_x(s) 
F_x(t)]$ (See, for example, \cite{van1996weak}).
By taking the integral, which is a bounded operator, 
we have that
$$
A_n \rightsquigarrow \int_0^{F_x^{-1}(m)}\mathbb{B}(t)\,dt \stackrel{d}{=} 
N(0,\sigma^2_x),
$$
where
$$
\sigma_x^2 = \frac{1}{m^2} \int_0^{F_x^{-1}(m)} \int_0^{F_x^{-1}(m)} 
[F_x(s \wedge t) - F_x(s) F_x(t)] \; ds \; dt.
$$
\end{proof}

Now, we consider the functional limit of the distance to measure, on a compact 
domain  $\mathcal X \subset \R^d$. The 
functional convergence of the DTM requires assumptions on the regularity of the quantile functions $F_x^{-1}$. 
We say that  $\omega_x : (0,1) \rightarrow \R^+ $ is a {\em modulus of continuity} of $F_x^{-1}$ if, for any $u\in (0,1)$,
\begin{equation*}
 \sup_{(m,m') \in (0,1)^2\; , \;  |m' - m|  < u} | F_x^{-1} (m') - F_x^{-1}(m)| \leq \omega_x(u),
\end{equation*}
with $  \lim_{u \rightarrow 0}  \omega_x(u) = \omega_x(0) = 0$. 
We say that $\omega_{\mathcal X}: (0,1) \rightarrow \R^+$ is an {\em uniform modulus of continuity} for the family of quantiles functions  $(F_x^{-1})_{\mathcal X}$ if, for any $u\in (0,1)$ and any $x \in \mathcal X$,
\begin{equation*}
\sup_{(m,m') \in (0,1)^2\; , \;  |m' - m|  < u} | F_x^{-1} (m') - F_x^{-1}(m)|  \leq \omega_{\mathcal X}(u),
\end{equation*}
with $ \lim_{u \rightarrow 0} \omega_{\mathcal X}(u) =
\omega_{\mathcal X}(0) = 0$.  When such modulus of continuity $\omega
$ exists, note that it always can be chosen non decreasing and this
allows us to consider its generalized inverse $\omega ^{-1}$.

One may ask if the existence of the uniform modulus of continuity over a compact 
domain $\cal X$ is a strong assumption or not.   To answer this issue, let us 
introduce the following assumption:

\medskip

\noindent $\left(H_{\omega,\mathcal X} \right):$ for any $x \in
\mathcal X$, the push forward measure $P_x$ of $P$ by $\|x-\cdot \|^2$
is supported on an interval and the absolutely continuous component of
$P_x$ has on that interval an a.e. positive density with respect to
the Lebesgue measure on $\R$.

\medskip

Note that Assumption $\left(H_{\omega,\mathcal X} \right)$ is not very strong.  
For instance it is satisfied for a measure $P$ supported on a compact and 
connected manifold, with $P_x$ absolutely continuous for the Hausdorff measure 
on $P$. The following Lemma  derives from general  results on quantile functions 
given in~\cite{bobkov2014one} (see their Appendix A);
the lemma shows that a uniform 
modulus of continuity for the quantiles exists under 
Assumption~$\left(H_{\omega,\mathcal X} \right)$.

\begin{lemma}[Existence of Uniform Modulus of Continuity]
Let $\cal X$ be a compact domain and let $P$ be a measure with compact
support in $\R^d$. Assume that Assumption~$\left(H_{\omega,\mathcal X}
\right)$ is satisfied. Then there exists an uniform modulus of
continuity for the family of quantile functions $F_x^{-1}$ over $\cal X$.
\end{lemma}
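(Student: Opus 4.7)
My plan is to combine the pointwise continuity of each $F_x^{-1}$, which follows from the Bobkov–Ledoux results cited in the statement, with a compactness argument that exploits the compactness of $\mathcal X$. Assumption $(H_{\omega,\mathcal X})$ forces the pushforward $P_x$ to be concentrated on an interval with an absolutely continuous component of a.e. positive density; this makes $F_x$ strictly increasing on its support and so $F_x^{-1}$ continuous on $(0,1)$. Since $P$ and $\mathcal X$ are both compact, the support of $P_x$ is bounded uniformly in $x$, so Appendix~A of \cite{bobkov2014one} yields that each $F_x^{-1}$ in fact extends to a continuous function on the closed interval $[0,1]$. In particular, for each fixed $x$ the optimal modulus
$$
\omega_x(u) \;=\; \sup_{|m-m'|<u,\; m,m'\in(0,1)} |F_x^{-1}(m) - F_x^{-1}(m')|
$$
tends to $0$ as $u \to 0$; the whole problem is to show that the supremum of $\omega_x$ over $x \in \mathcal X$ also tends to $0$.

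The key step is to prove that the map $\Psi : \mathcal X \to C([0,1])$ defined by $\Psi(x) = F_x^{-1}$ is continuous in the uniform norm. If $x_n \to x$ in $\mathcal X$, then since $P$ has compact support the squared-distance functions $\|x_n - \cdot\|^2$ converge uniformly on $\mathrm{supp}(P)$ to $\|x-\cdot\|^2$, so the pushforward measures $P_{x_n}$ converge weakly to $P_x$. Standard results on quantile functions then give pointwise convergence $F_{x_n}^{-1}(m) \to F_x^{-1}(m)$ at every continuity point of $F_x^{-1}$, and by the previous step every $m \in [0,1]$ is such a point. Because each $F_{x_n}^{-1}$ is monotone and the limit $F_x^{-1}$ is continuous on the compact interval $[0,1]$, a Pólya-type uniform convergence theorem upgrades this pointwise convergence to convergence in $C([0,1])$.

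Continuity of $\Psi$ together with compactness of $\mathcal X$ then implies that $\Psi(\mathcal X) = \{F_x^{-1} : x \in \mathcal X\}$ is a compact subset of $C([0,1])$. By the Arzelà–Ascoli theorem, any such compact family is equicontinuous, so there is a function $\omega_{\mathcal X} : (0,1) \to \R^+$ with $\omega_{\mathcal X}(u) \to 0$ as $u \to 0$ that serves as a modulus of continuity for every $F_x^{-1}$ simultaneously. This is precisely the required uniform modulus of continuity. The main technical obstacle is the Pólya-type step: one must check that the weak-convergence argument really gives uniform convergence of $F_{x_n}^{-1}$ on all of $[0,1]$, including the endpoints, and that the continuity of the limit (guaranteed by the Bobkov–Ledoux machinery under $(H_{\omega,\mathcal X})$) is strong enough to invoke the uniform convergence theorem on the closed interval rather than only on compact subsets of $(0,1)$.
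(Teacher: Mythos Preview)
Your argument is correct, but it takes a different route from the paper's proof. The paper relies on the Lipschitz Lemma (Lemma~\ref{lemma::lipschitz}, stated just after this result): a short nested-balls argument gives
\[
\bigl|F_{x'}^{-1}(m)-F_x^{-1}(m)\bigr|\le 2\sqrt{q_{P,\mathcal X}(m)}\,\|x'-x\|,
\]
with the constant uniform over $m$ once $P$ and $\mathcal X$ are compact. From this it follows at once that $x\mapsto\omega_x(u)$ is Lipschitz for every fixed $u$, and a brief subsequence argument on the compact $\mathcal X$ then shows $\omega_{\mathcal X}(u):=\sup_x\omega_x(u)\to 0$ as $u\to 0$.

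Your approach---weak convergence of the pushforwards $P_{x_n}\Rightarrow P_x$, pointwise convergence of quantiles at continuity points, a P\'olya-type upgrade to uniform convergence, and finally Arzel\`a--Ascoli on $C([0,1])$---is softer and more functional-analytic. It has the virtue of being conceptually clean and of generalizing to settings where no explicit Lipschitz bound is at hand. The paper's route, by contrast, sidesteps entirely the endpoint issues you correctly flag: because the Lipschitz estimate is pointwise in $m\in(0,1)$ and the final compactness argument never leaves $(0,1)$, there is no need to extend $F_x^{-1}$ to $[0,1]$ or to verify convergence of $F_{x_n}^{-1}$ at $m=0,1$. In effect the Lipschitz Lemma already shows that $\Psi:x\mapsto F_x^{-1}$ is Lipschitz into $\ell^\infty\bigl((0,1)\bigr)$, which is a stronger and more direct statement than the continuity you obtain through weak convergence.
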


\begin{proof}
Let $x \in \mathcal{X}$. According to Proposition A.17 in \cite{bobkov2014one}, Assumption~$\left(H_{\omega,\mathcal X} \right)$ is equivalent to the absolute continuity of $F_x^{-1}$ in $[0,1)$.  
We can then define a modulus of continuity of $F_x^{-1}$ by 
$$u \in (0,1) \,  \mapsto \, \omega_x (u):= \sup_{(m,m') \in (0,1)^2\; , \;  |m' - m|  < u} \left|F_x^{-1} (m') - F_x^{-1}(m) \right| . $$
According to Lemma~\ref{lemma::lipschitz}, we have that for any $(x,x') \in \mathcal X  ^2 $:
\begin{equation}
\label{QuantileLip}
\left | F_{x'}^{-1}(m) - F_{x}^{-1}(m) \right | \leq  C   \Vert x' - x \Vert , 
\end{equation}
where $C$ only depends on $P$ and $\cal X$.  According to \eqref{QuantileLip}, for any $(m,m') \in (0,1)^2$, and for any $(x,x') \in \mathcal X  ^2 $:
 $$ \left|F_x^{-1} (m') - F_x^{-1}(m) \right| \leq   \left|F_{x'}^{-1} (m') - F_{x'}^{-1}(m) \right| + 2   C \Vert x' - x \Vert . $$ 
By taking the supremum over the $m$ and the $m'$ such that $|m' - m|  < u$, it yields:
 $$ \omega_x (u)  \leq \omega_{x'} (u)   +  2   C \Vert x' - x \Vert ,$$ 
and $x \mapsto \omega_{x}(u) $ is thus Lipschitz at any $u$.  For any $u \in (0,1)$, let
$$\omega_{\cal X}(u)  := \sup_{ x \in \mathcal X}  \omega_x (u)  ,$$
which is finite because the function $x \mapsto \omega_{x}(u) $ is continuous on the compact $\cal X$ for any $u \in
(0,1)$. We only need to prove that  $\omega_{\cal X}$ is continuous at $0$. Let $(u_n) \in (0,1)^{\mathbb N}$ be a
decreasing sequence to zero. Since   $\omega_{\cal X}$ is a non decreasing function, $\omega_{\cal X} (u_n)$ has a
limit. For any $n \in \mathbb N$, there exists a point $x_n \in \cal X$ such that $\omega_{\cal X}(u_n)  =
\omega_{x_n}(u_n)$. Let $x_{\phi(n)}$ be a subsequence which converges to $\bar x \in \mathcal X$. According to
\eqref{QuantileLip}, 
\begin{eqnarray*}
\omega_{\cal X}(u_{\phi(n)})& \leq  &\left|\omega_{x_{\phi(n)}}(u_{\phi(n)}) - \omega_{\bar x} (u_{\phi(n)})  \right| + \left|  \omega_{\bar x}(u_{\phi(n)}) \right| \\
& \leq & C \left\|x_{\phi(n)} - \bar x \right\| + \left|  \omega_{\bar x}(u_{\phi(n)}) \right|
\end{eqnarray*}
which gives that $\omega_{\cal X}(u_{\phi(n)})$ and  $\omega_{\cal X}(u_n)$ both tend to zero because $\omega_{\bar x}$ is continuous at zero. Thus $\omega_{\cal X}$ is continuous at zero and the Lemma is proved.
\end{proof}

We will also need the the following result, which shows that on any
compact domain $\mathcal X$, the function $x \mapsto F_{x}^{-1}(m)$ is
Lipschitz. For a domain $\mathcal X \in \R^d$, a probability $P$ and a
level $m$, we introduce the quantity  $q_{P,\mathcal X}(m) \in  \bar \R$,
defined by
$$q_{P,\mathcal X}(m) := \sup _{x \in \mathcal X}  F_x ^{-1} (m)    .$$
\begin{lemma}[Lipschitz Lemma]
\label{lemma::lipschitz}
Let $P$  be a measure on $\R^d$ and let $m \in (0,1)$. Then, 
for any $(x,x') \in \R^d$,
$$
\left | \sqrt{F_{x'}^{-1}(m)} - \sqrt{F_{x}^{-1}(m)} \right | \leq \Vert 
x' - x\Vert .
$$
Moreover, if  $\mathcal X$ is a compact domain in $\R^d$, then $q_{P,\mathcal
X}(m)  < \infty $ and for any $(x,x') \in \mathcal X  ^2 $:
$$  
\left | F_{x'}^{-1}(m) - F_{x}^{-1}(m) \right | \leq  2  \sqrt {q_{P,\mathcal X} (m)}  \  \Vert x' - x \Vert  . 
$$
\end{lemma}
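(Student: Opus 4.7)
The plan is to first prove the square-root inequality via the triangle inequality in $\R^d$, then deduce the second inequality as a corollary by factoring $a^2-b^2=(a-b)(a+b)$, with a preliminary step to verify the finiteness of $q_{P,\mathcal X}(m)$.

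For the first inequality, I would rewrite things in terms of $G_x$ rather than $F_x$. Recall that $G_x(t)=P(\|X-x\|\le t)$ and $F_x(t)=P(\|X-x\|^2\le t)$, so that, as already noted in Lemma~\ref{lemma::simplify}, $\sqrt{F_x^{-1}(m)}=G_x^{-1}(m)$. The reverse triangle inequality $\|X-x\|\le \|X-x'\|+\|x'-x\|$ yields the pointwise set inclusion
\begin{equation*}
\{\|X-x'\|\le t\}\ \subseteq\ \{\|X-x\|\le t+\|x'-x\|\},
\end{equation*}
hence $G_{x'}(t)\le G_x(t+\|x'-x\|)$ for every $t\ge 0$. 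Taking generalized inverses in $m\in(0,1)$ gives $G_x^{-1}(m)\le G_{x'}^{-1}(m)+\|x'-x\|$, and exchanging the roles of $x$ and $x'$ yields $|G_{x'}^{-1}(m)-G_x^{-1}(m)|\le\|x'-x\|$. Substituting back $G_x^{-1}(m)=\sqrt{F_x^{-1}(m)}$ proves the first inequality.

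Next, I would establish that $q_{P,\mathcal X}(m)<\infty$. Fix any $x_0\in\mathcal X$. Since $P$ is a probability measure on $\R^d$ and $m<1$, the monotone limit $F_{x_0}(t)\to 1$ as $t\to\infty$ forces $F_{x_0}^{-1}(m)<\infty$. Compactness of $\mathcal X$ gives a diameter $D:=\sup_{x\in\mathcal X}\|x-x_0\|<\infty$, and applying the first inequality,
\begin{equation*}
\sqrt{F_x^{-1}(m)}\ \le\ \sqrt{F_{x_0}^{-1}(m)}+D\qquad\text{for all }x\in\mathcal X,
\end{equation*}
so $q_{P,\mathcal X}(m)\le\bigl(\sqrt{F_{x_0}^{-1}(m)}+D\bigr)^2<\infty$.

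Finally, for the second inequality, I would factor
\begin{equation*}
\bigl|F_{x'}^{-1}(m)-F_x^{-1}(m)\bigr|=\bigl|\sqrt{F_{x'}^{-1}(m)}-\sqrt{F_x^{-1}(m)}\bigr|\cdot\bigl(\sqrt{F_{x'}^{-1}(m)}+\sqrt{F_x^{-1}(m)}\bigr),
\end{equation*}
bound the first factor by $\|x'-x\|$ using the first inequality, and bound the second factor by $2\sqrt{q_{P,\mathcal X}(m)}$ using the definition of $q_{P,\mathcal X}(m)$. No step looks subtle; the only place one needs to be careful is handling generalized inverses of non-continuous cdfs, which is standard and follows from the equivalence $G(t)\ge m\iff t\ge G^{-1}(m)$ combined with right-continuity.
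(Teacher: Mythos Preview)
Your proposal is correct and follows essentially the same approach as the paper: the paper argues directly via the ball inclusion $B(x,\sqrt{F_x^{-1}(m)})\subseteq B(x+a,\sqrt{F_x^{-1}(m)}+\|a\|)$, which is the same triangle-inequality content as your set inclusion $\{\|X-x'\|\le t\}\subseteq\{\|X-x\|\le t+\|x'-x\|\}$ phrased through $G_x$, and it then uses the identical factorization $|u-v|=|\sqrt u-\sqrt v|\,(\sqrt u+\sqrt v)$ for the second inequality. Your version is arguably slightly more careful about the generalized-inverse step (the paper writes $m=P[B(x,\sqrt{F_x^{-1}(m)})]$, which in general is only $\ge m$), but the argument and its structure are the same.
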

\begin{proof}
Let $(x,a) \in \R ^2$, note that
$$
B\left(x, \sqrt{F_x^{-1}(m)} \right) \subseteq B\left(x+a, 
\sqrt{F_x^{-1}(m)} +\Vert a \Vert \right),
$$
which implies 
$$
m= \mathbb{P}\left[ B\left(x, \sqrt{F_x^{-1}(m)}\right)  \right] \leq 
\mathbb{P} \left[ B\left(x+a, \sqrt{F_x^{-1}(m)}+\Vert a \Vert 
\right) \right].
$$
Therefore $\sqrt{F_{x+a}^{-1}(m)} \leq \sqrt{F_x^{-1}(m)} + \Vert a 
\Vert$. Similarly,
$$
m = \mathbb{P}\left[ B\left(x+a, \sqrt{F_{x+a}^{-1}(m)} \right)  
\right] 
\leq \mathbb{P} \left[ B\left(x, \sqrt{F_{x+a}^{-1}(m)}+\Vert a \Vert 
\right) \right],
$$
which implies $\sqrt{F_x^{-1}(m)} \leq \sqrt{F_{x+a}^{-1}(m)} + \Vert 
a \Vert$.

Let $\mathcal X$ be a compact domain of $\R^d$, then according to the previous result for some fixed $x \in \mathcal X$ and for any $x' \in \mathcal X$, $ \sqrt {F_{x'} ^{-1}  (m) } \leq \| x' -x \|  +  \sqrt {F_{x} ^{-1}  (m)} $ which is bounded on $\mathcal X$.  The last statement follows from the fact that $|x-y| = |\sqrt{x} - \sqrt{y}|\ |\sqrt{x} + \sqrt{y}|$.
\end{proof}

We are now in position to state the functional limit of the distance to measure to the empirical measure.
\begin{theorem}[Functional Limit] \label{theo:functionalDTM}
Let $P$ be a measure on $\R^d$ with compact support.  Let  $\mathcal X$ be a compact
domain on $\R^d$ and $m \in (0,1)$. Assume that there exists a uniform modulus of continuity $\omega_{\mathcal X}$ for the family $(F_x^{-1})_{\mathcal X}$.
Then
$\sqrt{n}(\edtm^2(x) - \dtm^2(x))\rightsquigarrow \mathbb{B}(x)$
for a centered Gaussian process $\mathbb{B}(x)$
with covariance~kernel 
$$
\kappa(x,y) = \frac{1}{m^2} \int_0^{F_x^{-1}(m)} 
\int_0^{F_y^{-1}(m)} 
\Biggl(\mathbb{P}\left[ B(x, \sqrt{t}) \cap B(y, \sqrt{s}) \right] - 
F_x(t)F_y(s)\; \Biggr)ds \; dt.
$$
\end{theorem}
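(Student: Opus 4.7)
The plan is to leverage the decomposition used pointwise in Theorem~\ref{th:pointwiseDTM} and promote the central-limit step to weak convergence in $\ell^\infty(\mathcal X)$, using Donsker theory for the class of closed Euclidean balls together with the continuous mapping theorem. For each $x \in \mathcal X$, write
$$
\sqrt{n}(\edtm^2(x) - \dtm^2(x)) = A_n(x) + R_n(x),
$$
where
$$
A_n(x) = -\frac{1}{m}\int_0^{F_x^{-1}(m)} \mathbb{G}_n\bigl(B(x,\sqrt{t})\bigr)\,dt, \qquad R_n(x) = \frac{1}{m}\int_{F_x^{-1}(m)}^{\hat F_x^{-1}(m)} \sqrt{n}[m - \hat F_x(t)]\,dt,
$$
and $\mathbb{G}_n = \sqrt{n}(\hat P_n - P)$ is the empirical process. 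The identity $\hat F_x(t) = \hat P_n(B(x,\sqrt{t}))$ turns the problem into one about the empirical process indexed by the class $\mathcal B = \{B(x,r) : x \in \mathcal X,\, r \geq 0\}$ of closed balls.

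The first step is to show that $\sup_{x\in\mathcal X} |R_n(x)| = o_P(1)$. Since $\mathcal B$ is VC, $\sup_{x,t} |\hat F_x(t) - F_x(t)| = O_P(n^{-1/2})$. Combined with the uniform modulus $\omega_{\mathcal X}$ and the identity $F_x(\hat F_x^{-1}(m)) = m + O_P(n^{-1/2})$ (uniformly in $x$), this yields $\sup_{x\in\mathcal X} |\hat F_x^{-1}(m) - F_x^{-1}(m)| \leq \omega_{\mathcal X}\bigl(O_P(n^{-1/2})\bigr) = o_P(1)$. Bounding $R_n(x)$ pointwise as in the proof of Theorem~\ref{th:pointwiseDTM} by $\frac{\sqrt n}{m}|\hat F_x^{-1}(m) - F_x^{-1}(m)| \cdot \sup_t|F_x(t) - \hat F_x(t)|$ then gives $\sup_x |R_n(x)| = o_P(1)$.

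Second, I would establish the weak convergence of $A_n$. Since $\mathcal B$ is Donsker, $\mathbb{G}_n \rightsquigarrow \mathbb{G}$ in $\ell^\infty(\mathcal B)$, where $\mathbb G$ is a tight centered Gaussian process with almost surely uniformly continuous sample paths and covariance $P(A_1 \cap A_2) - P(A_1)P(A_2)$. Define the functional $\Psi$ that sends $g=g(x,t)$ to $-\frac{1}{m}\int_0^{F_x^{-1}(m)} g(x,t)\,dt$. Lemma~\ref{lemma::lipschitz} shows that $x \mapsto F_x^{-1}(m)$ is Lipschitz, so together with the almost sure uniform continuity of sample paths of $\mathbb G$, the map $\Psi$ is continuous from $\ell^\infty(\mathcal X \times \R^+)$ to $\ell^\infty(\mathcal X)$ at the limit. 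The continuous mapping theorem then delivers $A_n = \Psi(\mathbb G_n) \rightsquigarrow \Psi(\mathbb G)$ in $\ell^\infty(\mathcal X)$. Expanding the covariance of $\Psi(\mathbb G)$ by Fubini (the two minus signs cancel) gives exactly the claimed kernel $\kappa(x,y)$.

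The main obstacle is the uniform control of $R_n$: converting the uniform $O_P(n^{-1/2})$ bound on the bivariate cdf process into uniform control on the empirical quantile $\hat F_x^{-1}(m)$ over $x\in\mathcal X$. This is precisely what the existence of the uniform modulus of continuity $\omega_{\mathcal X}$ buys us. A subsidiary technical point is the continuity of $\Psi$ at the limit process $\mathbb G$, which requires both the Lipschitz regularity of the upper integration limit and the almost sure uniform continuity of sample paths provided by the Donsker hypothesis. Once these two ingredients are in place, the theorem follows by an empirical-process CLT combined with a routine covariance computation.
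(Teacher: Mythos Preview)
Your proposal is correct and uses the same $A_n+R_n$ decomposition as the paper, but each half is executed with a somewhat different technique, so a brief comparison is worthwhile.

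For the remainder $R_n$, your route is cleaner than the paper's. You use the VC bound $\sup_{x,t}|\hat F_x-F_x|=O_P(n^{-1/2})$ together with the standard quantile sandwich $F_x^{-1}(m-\epsilon)\le\hat F_x^{-1}(m)\le F_x^{-1}(m+\epsilon)$ (with $\epsilon=\|\hat F_x-F_x\|_\infty$), and then the uniform modulus $\omega_{\mathcal X}$ gives $\sup_x|\hat F_x^{-1}(m)-F_x^{-1}(m)|=o_P(1)$ directly. The paper instead represents $\hat F_x^{-1}(m)$ through the uniform quantile transform, applies a Bernstein-type deviation bound for uniform order statistics, and reaches the supremum over $\mathcal X$ by an $\epsilon$-covering argument based on Lemma~\ref{lemma::lipschitz}.

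For the main term $A_n$, the paper does \emph{not} go through the continuous mapping theorem on top of Donsker for balls. It computes the $t$-integral explicitly to obtain $A_n(x)=\tfrac{1}{m}\nu_n(f_x)$ with $f_x(y)=\bigl[F_x^{-1}(m)-\|x-y\|^2\bigr]_+$, and then shows that $\{f_x:x\in\mathcal X\}$ is a Lipschitz-in-$x$ (hence $P$-Donsker) class, using Lemma~\ref{lemma::lipschitz} and compact support. This avoids the measurability wrinkle in your argument: your $\Psi$ involves integrating $t\mapsto g\bigl(B(x,\sqrt t)\bigr)$, which is not defined for arbitrary $g\in\ell^\infty(\mathcal B)$. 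Your appeal to the a.s.\ uniform continuity of $\mathbb G$ is really about well-definedness of $\Psi(\mathbb G)$, not about continuity of $\Psi$; where $\Psi$ is defined it is a bounded linear operator (boundedness coming from $\sup_x F_x^{-1}(m)<\infty$, Lemma~\ref{lemma::lipschitz}), hence continuous everywhere, so neither the Lipschitz property of $x\mapsto F_x^{-1}(m)$ nor path regularity of $\mathbb G$ is needed for that step. Your approach can be made rigorous, but the paper's explicit-integral reduction to a Donsker class is tidier.
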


\begin{remark} \label{rem:localmodulus}
Note that the functional limit is valid for any value of $m \in
(0,1)$. A local version of this result could be also
proposed by considering the (local) modulii of continuity of the
quantile functions at $m$. For the sake of clarity, we prefer to give
a global version.
\end{remark}

\begin{proof}
In the proof of Theorem \ref{th:pointwiseDTM} we showed that
$\sqrt{n}(\edtm^2(x) - \dtm^2(x)) = A_n(x) + R_n(x)$
where
\begin{align*}
A_n(x) &= \frac{1}{m}\int_0^{F_x^{-1}(m)} \sqrt{n}[F_x(t) - \hat F_x(t)] dt\\
R_n(x) &=\frac{1}{m}\int_{F_x^{-1}(m)}^{\hat F_x^{-1}(m)} \sqrt{n}[m - \hat 
F_x(t)] dt.
\end{align*}
First, we show that $ \sup_{x \in \mathcal X} |R_n(x)| = o_P(1)$. Then we prove 
that $A_n(x)$ converges to a Gaussian process.

Note that 
$|R_n(x)| \leq \frac{\sqrt{n}}{m} | S_n(x) | |T_n(x)| $
where
$$
S_n(x)=\left| F_x^{-1}(m) - \hat F_x^{-1}(m) \right|,\ \ \ 
T_n(x)=\sup_t \left| F_x(t) - \hat F_x(t) \right|.
$$
Let $\xi_i \sim $Uniform (0,1), for $i=1, \dots, n$ and 
let $H_n$ be their empirical distribution function. Define $k=mn$. Then
$\hat F_x^{-1}(m) \stackrel{d}{=} F_x^{-1}(\xi_{(k)})= F_x^{-1}\left( 
H_n^{-1}(m) \right)$, where 
$\xi_{(k)}$ is the $k$th order statistic.
Thus,  for any $m>0$ and any $x \in \mathcal X$:
\begin{align}
\mathbb{P} \left( |  S_n(x)| > \epsilon\right) &=
\mathbb{P} \left( |F_x^{-1} (H^{-1}_n(m)) -  F_x^{-1}(m)| > \epsilon\right) \notag \\
& \leq \mathbb{P} \left( \omega_{\mathcal X} ( | m - H^{-1}_n(m) |  ) > \epsilon\right) \notag \\
& \leq \mathbb{P} \left( | m - H^{-1}_n(m)| > \omega_{\mathcal X}^{-1}(\epsilon)\right) \notag  \\
& \leq 2 \exp \left\{  - \frac{n \left[ \omega_{\mathcal X}^{-1}(\epsilon)  \right] ^2}{m}   
\frac{1}{1 + \frac {2 \omega_{\mathcal X}^{-1}(\epsilon)  }{3 m}}   \right\}  \label{eq:deviationSn}
\end{align}
In the last line we used inequality 1 page 453 and Point (12) of
Proposition 1 page 455 of \cite{shorack2009empirical}. Note that
$\omega_{\mathcal X}^{-1}(\epsilon) >0$ for any $\varepsilon >0$
because $\omega_{\mathcal X}$ is assumed to be continuous at zero by
definition.

Fix $\varepsilon >0$.  There exists an absolute constant $C_{\mathcal X}$ 
such that there exists an integer $N \leq C_{\mathcal X}
\varepsilon ^{-d}$ and $N$ points $(x_1,\dots,x_N)$ laying in
$\mathcal X$ such that $ \bigcup_{j= 1 \dots N} B_j \supseteq \mathcal
X$, where $B_j = B(x_j, \varepsilon)$. Now, we apply
Lemma~\ref{lemma::lipschitz} with $P$, and with $P_n$ and we find that
for any $x \in B_j$:
$$  
\left | F_{x}^{-1}(m) - F_{x_j}^{-1}(m) \right | \leq  2  \sqrt {q_{P,\mathcal X} (m)} \  \varepsilon  
\quad \textrm{ and } \quad   
\left | \hat F_{x}^{-1}(m) - \hat F_{x_j}^{-1}(m) \right | \leq  2  \sqrt {q_{P_n,\mathcal X} (m)}  \ \varepsilon .
$$
Thus, for any $x \in B_j$,
\begin{align} 
\left| F_{x}^{-1}(m) - \hat F_{x}^{-1}(m)  \right|   
& \leq  \left|  F_{x}^{-1}(m) -  F_{x_j}^{-1}(m)   \right| +  \left|  F_{x_j}^{-1}(m) -   \hat F_{x_j}^{-1}(m)    \right|  + \left|  \hat  F_{x_j}^{-1}(m) -   \hat F_{x}^{-1}(m)    \right| \notag \\
& \leq   2   \left[\sqrt {q_{P,\mathcal X} (m)}  + \sqrt {q_{P_n,\mathcal X} (m)}  \right] \,   \varepsilon   + | F_{x_j}^{-1}(m) -   \hat F_{x_j}^{-1}(m)   |   \notag \\
& \leq    C    \varepsilon   + | F_{x_j}^{-1}(m) -   \hat F_{x_j}^{-1}(m)   |  
\label{MajorFx-1}
\end{align}
where $C  $ is a positive constant which only depends on $\cal X$ and $P$. Using a union bound together with  \eqref{eq:deviationSn} , we find that 
\begin{align*}
P \left(    \sup_{x \in \mathcal X}  |S_n (x) |  >  2 C     \varepsilon \right)  
& \leq P \left(    \sup_{j = 1 \dots N }  |S_n (x_j) |  >  C   \varepsilon \right) \\
 & \leq   2 C_{\mathcal X}  \varepsilon ^{-d}  \exp \left\{  - \frac{n \left[ \omega_{\mathcal X}^{-1}(C  \epsilon)  \right] ^2}{m}   \frac 1 {1 + \frac {2 \omega_{\mathcal X}^{-1}(C  \epsilon)  }{3 m}}   \right\} .
\end{align*}
Thus, $\sup_{x \in \mathcal X}  |S_n(x) |  =
o_P(1)$. Then
\begin{align}
 \sup_{x \in \mathcal X} |T_n(x)| &= \sup_ {x \in \mathcal X} \sup_t |\hat F_x(t) - F_x(t)| \notag \\
&= \sup_ {x \in \mathcal X} \sup_t \left| \mathbb{P}_n  \left( B(x,\sqrt{t} \right) - \mathbb{P}  \left( B(x,\sqrt{t} \right) \right| \notag\\
& \leq \sup_{B \in \mathcal{B}_d} |\mathbb{P}_n(B)-\mathbb{P}(B)| =
O_P\left( {\sqrt{ \frac \dimension n}} \right)  \label{ControlTn} 
\end{align}
where ${\cal B}_d$ is the set of balls in $\mathbb{R}^d$ and we used the Vapnik-Chervonenkis theorem. Finally, we obtain
that
\begin{equation} \label{eq:RnOp}
 \sup_{x \in \mathcal X}   |R_n(x)| \leq  \frac {\sqrt n} m \sup_{x \in \mathcal X}   |S_n(x) | \sup_{x \in \mathcal X} 
|T_n(x)|  = o_P(1) .
 \end{equation}
Since  $\sup_{x \in \mathcal X}   |R_n(x)| = o_P(1)$, it only remains to prove that the process $A_n$ converges to a Gaussian process.

Now, we consider the process $A_n$ on $\mathcal X$. 
Let us denote $\nu_n := \sqrt n (P_n - P)$ the empirical process. Note that 
$$
A_n(x) = \frac 1 m  \nu_n \left ( \int_ {0 } ^{F_x ^{-1} (m)}  \mathbb 1 _{  \|x -  X\|^2 \leq t}  d t  \right) =
\frac 1 m  \nu_n \left ( f_{x} \right)  
$$
where 
$f_{x} (y) :=    \left[ F_x ^{-1} (m) -    \|x -  y\|^2   \right] \wedge 0$. 
For any $(x,x') \in \mathcal X$ and
any $y \in \R^d$, we have
\begin{align*} 
| f_{x} (y) - f_{x} (y) |  & \leq  | F_x ^{-1} (m) - F_{x'} ^{-1} (m) | +  \|x -  x'\| \left[ \| x  \| + \| x'\|  + 2 
\| y \|  \right]\\
&\leq   2 \left[  r ( \mathcal X ) +  \| y\|  + \sqrt{q_{P, \mathcal X}(m)  }  \right]    \|x -  x'\| 
\end{align*}
Since $P$ is compactly supported, then the collection of functions  $ \left( f_{x} \right)_{ x \in \mathcal
X}$ is $P$-Donsker (see for instance 19.7 in~\cite{van2000asymptotic}) and $A_n(x) \rightsquigarrow \mathbb{B}(x)$
for a centered Gaussian process $\mathbb{B}(x)$
with covariance~kernel 
\begin{align*}
\kappa(x,y) &= \text{Cov}(A_n(x), A_n(y)) = \mathbb{E}[A_n(x) A_n(y)] \\
&= \frac{1}{m^2} 
\int_0^{F_x^{-1}(m)} \int_0^{F_y^{-1}(m)} 
\mathbb{E}\left[ \left(\hat F_x(t) - F_x(t)\right) \left(\hat F_y(s) - 
F_y(s)\right) \right] \; ds \; dt \\
&= \frac{1}{m^2} \int_0^{F_x^{-1}(m)} 
\int_0^{F_y^{-1}(m)} 
\Biggl(\mathbb{P}\left[ B(x, \sqrt{t}) \cap B(y, \sqrt{s}) \right] - F_x(t)F_y(s)\Biggr)  \; 
ds \; dt.
\end{align*}
\end{proof}


\section{Hadamard Differentiability and The Bootstrap}
\label{sec:hadamard}

In this section, we use the bootstrap to get a confidence band for
$\delta$.
Define $c_\alpha$ by
$$
\mathbb{P}(\sqrt{n}||\hat\delta - \delta||_\infty > c_\alpha) = \alpha.
$$
Let $X_1^*,\ldots, X_n^*$ be a sample from
the empirical measure $P_n$ and let
$\hat\delta^*$ be the corresponding empirical DTM.
The bootstrap estimate $\hat c_\alpha$ is defined by
$$
\mathbb{P}(\sqrt{n}||\hat\delta^* - \hat\delta||_\infty > \hat c_\alpha|X_1,\ldots, X_n) = \alpha.
$$
As usual, $\hat c_\alpha$ can be approximated
by Monte Carlo.
Below we show that this bootstrap is valid.
It then follows that
$$
\mathbb{P}\left(||\delta - \hat\delta||_\infty < \frac{\hat c_\alpha}{\sqrt{n}}\right)\to 1-\alpha.
$$
A different approach to the bootstrap is considered in Section \ref{section::BB}.

To prepare for our next result, let $\mathcal{B}$ denote the class of all closed
Euclidean balls in $\mathbb{R}^d$ and let $\mathbb{B}$ denote the $P$-Brownian
bridge on $\mathcal{B}$, i.e. the centered Gaussian process on $\mathcal{B}$
with covariance function 
$\kappa(B,C) = P(B\cap C)-P(B)P(C)$, with $B,C$ in $\mathcal{B}$. We will denote
with $\mathbb{B}_x(r)$ the value of $\mathbb{B}$ at $B(x,r)$, the closed ball
centered at $x \in \mathbb{R}^d$ and with radius $r>0$.

\begin{theorem}[Bootstrap Validity]
    Let $P$ be a measure on $\mathbb{R}^d$ with compact support $S$, $m \in (0,1)$ be
    fixed and $\mathcal{X}$ be a compact domain in $\mathbb{R}^d$. Assume that  
     $F_{P,x} = F_x$ is  differentiable at
    $F_x^{-1}(m)$ and that
there exist a constant $C>0$ such that for all small $\eta \in \mathbb{R}$,
\begin{equation}\label{eq:inf.deriv}
     \sup_{x \in \cal X} \left| F_x\left( F^{-1}_x(m)  \right) -  F_x(F_x^{-1}(m)+\eta)\right| <
     \epsilon \quad \text{implies} \quad 
     \left| \eta \right|< C \epsilon.
\end{equation}
 for all $x \in \mathcal{X}$.
Then,
$\mathrm{sup}_{x \in \mathcal{X}} \sqrt{n}\left|  \left( \hat\delta^*(x)
\right)^2 - \left( \hat\delta (x) \right)^2 \right|$
converges in distribution to
\[
    \mathrm{sup}_{x \in \mathcal{X}} \left| \frac{1}{m} \int_0^{F_x^{-1}(m)}
    \mathbb{B}_x(u) du \right|
    \]
    conditionally given $X_1,X_2,\ldots$, in probability.
    \label{thm:bootstrap}
\end{theorem}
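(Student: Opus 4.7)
The plan is to mirror the proof of Theorem~\ref{theo:functionalDTM} in the bootstrap world: decompose the bootstrap centered process into a linearized empirical-process piece and a remainder, then invoke a Donsker-class bootstrap CLT. Let $P_n^*$ denote the bootstrap empirical measure, $\hat F_x^*(t) = P_n^*(\|X - x\|^2 \leq t)$, and $\hat F_x(t) = P_n(\|X - x\|^2 \leq t)$. The same manipulation that produced the $A_n + R_n$ decomposition gives
$$\sqrt{n}(\hat\delta^{*2}(x) - \hat\delta^2(x)) = A_n^*(x) + R_n^*(x),$$
with
$$A_n^*(x) = \frac{1}{m}\int_0^{\hat F_x^{-1}(m)} \sqrt{n}[\hat F_x(t) - \hat F_x^*(t)]\, dt, \qquad R_n^*(x) = \frac{1}{m}\int_{\hat F_x^{-1}(m)}^{\hat F_x^{*-1}(m)} \sqrt{n}[m - \hat F_x^*(t)]\, dt.$$

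First I would show $\sup_{x \in \mathcal{X}} |R_n^*(x)| = o_P(1)$. Since the class of closed balls in $\mathbb{R}^d$ is Vapnik--Chervonenkis, both $\sup_{x,t}|\hat F_x^*(t) - \hat F_x(t)|$ and $\sup_{x,t}|\hat F_x(t) - F_x(t)|$ are $O_P(n^{-1/2})$, by the Vapnik--Chervonenkis bound applied to $P$ and, conditionally, to $P_n$. Setting $\eta_x = \hat F_x^{*-1}(m) - F_x^{-1}(m)$ and chaining through $\hat F_x^*$, $\hat F_x$ and $F_x$ yields $\sup_{x\in\mathcal X}|F_x(F_x^{-1}(m) + \eta_x) - F_x(F_x^{-1}(m))| = O_P(n^{-1/2})$; assumption~\eqref{eq:inf.deriv} converts this into $\sup_{x\in\mathcal X}|\eta_x| = O_P(n^{-1/2})$, and, by the same argument, $\sup_{x\in\mathcal X}|\hat F_x^{*-1}(m) - \hat F_x^{-1}(m)| = O_P(n^{-1/2})$. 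For $t$ between these two quantiles, monotonicity of $\hat F_x^*$ and the fact that $\hat F_x^*(\hat F_x^{*-1}(m)) = m$ up to $O(1/n)$ give $|m - \hat F_x^*(t)| = O_P(n^{-1/2})$ uniformly. Hence $\sup_x |R_n^*(x)| \le (\sqrt n/m)\, O_P(n^{-1/2})\, O_P(n^{-1/2}) = o_P(1)$.

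Next I would establish conditional weak convergence of $A_n^*$. Writing $A_n^*(x) = \frac{1}{m}\nu_n^*(\hat g_x)$ with $\nu_n^* = \sqrt{n}(P_n^* - P_n)$ and $\hat g_x(y) = [\hat F_x^{-1}(m) - \|x-y\|^2] \wedge 0$, I would first replace $\hat g_x$ by $g_x(y) = [F_x^{-1}(m) - \|x-y\|^2] \wedge 0$. The difference $\sup_x|\nu_n^*(\hat g_x) - \nu_n^*(g_x)|$ is $o_P(1)$ by stochastic equicontinuity, since $\sup_x\|\hat g_x - g_x\|_{L^2(P)} \to 0$ (a consequence of Lemma~\ref{lemma::lipschitz} and $\sup_x|\hat F_x^{-1}(m) - F_x^{-1}(m)| = o_P(1)$). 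The class $\{g_x : x \in \mathcal{X}\}$ was shown $P$-Donsker in the proof of Theorem~\ref{theo:functionalDTM}, so the bootstrap CLT for Donsker classes (Theorem 3.6.1 in \cite{van1996weak}) gives $\nu_n^*(g_\cdot) \rightsquigarrow \mathbb{B}(g_\cdot)$ in $\ell^\infty(\mathcal X)$, conditionally in probability. Fubini's theorem rewrites $\frac{1}{m}\mathbb{B}(g_x)$ as $\frac{1}{m}\int_0^{F_x^{-1}(m)} \mathbb{B}_x(u)\, du$, matching the target limit.

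The conclusion follows from the continuous mapping theorem applied to the sup-norm functional on $\ell^\infty(\mathcal X)$, which is continuous and so transfers the conditional weak convergence of $A_n^* + R_n^*$ to the conditional weak convergence of the suprema. The main obstacle is step one: obtaining the uniform $O_P(n^{-1/2})$ rate for $\hat F_x^{*-1}(m) - \hat F_x^{-1}(m)$. Pointwise differentiability of $F_x$ is not enough here, as one needs a quantitative \emph{uniform} inversion of the local modulus of continuity of $F_x$ at $F_x^{-1}(m)$; this is precisely what assumption~\eqref{eq:inf.deriv} provides, and it is the key structural requirement that the bootstrap result imposes beyond Theorem~\ref{theo:functionalDTM}.
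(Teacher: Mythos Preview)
Your proof is correct but takes a genuinely different route from the paper. The paper proceeds via the functional delta method: it sets up a normed space of signed measures restricted to balls, shows that the map $\mu \mapsto (x \mapsto \delta^2_{\mu,m}(x))$ is Hadamard differentiable at $P$ with derivative $Q \mapsto -\frac{1}{m}\int_0^{F_x^{-1}(m)} Q(\mathcal{A}_{x,u})\,du$, and then invokes the bootstrap delta method (Theorem~3.9.11 in \cite{van1996weak}). Their verification of Hadamard differentiability---an $A_1,A_2,A_3$ decomposition together with a lemma giving $\sup_x|F_x^{-1}(m) - F_{x,t}^{-1}(m)|/t = O(1)$---does essentially the same analytic work as your bound on $R_n^*$, but for deterministic perturbations $P+tQ_t$ rather than for the random perturbation $P_n^*-P_n$.

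What each approach buys: the delta-method route is more modular, since once Hadamard differentiability is established both the weak limit of Theorem~\ref{theo:functionalDTM} and bootstrap validity follow by citation; assumption~\eqref{eq:inf.deriv} enters as the condition enabling uniform quantile stability in that abstract setting. Your direct route is closer to the proof of Theorem~\ref{theo:functionalDTM} and makes transparent that the only new ingredient is the uniform $O_P(n^{-1/2})$ rate for the bootstrap quantile, which is exactly what \eqref{eq:inf.deriv} delivers. One small refinement: to replace $\hat g_x$ by $g_x$ via asymptotic equicontinuity of $\nu_n^*$, you should work in the enlarged class $\{[r - \|x-\cdot\|^2]_+ : x\in\mathcal X,\ 0\le r\le R\}$ for $R$ large, since $\hat g_x$ is a random function not belonging to $\{g_x : x\in\mathcal X\}$; the Lipschitz argument in the paper extends to this class without change.
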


We will establish the above result using the functional delta method, which entails showing that the distance to measure function is
Hadamard differentiable at $P$. In fact, the proof further shows that  the process
\[
    x \in \mathcal{X} \mapsto \sqrt{n} \left( \delta^2(x) - \hat \delta^2 (x) \right), 
    \]
    converges weakly to the Gaussian process 
    \[
 x \in \mathcal{X} \mapsto - \frac{1}{m} \int_0^{F_x^{-1}(m) } \mathbb{B}_x(u) du.
	\]	

\begin{remark}
This result is consistent with the result established in
Theorem~\ref{theo:functionalDTM}, but in order to establish Hadamard
differentiability, we use a slightly different assumption.
Theorem~\ref{theo:functionalDTM} is proved by assuming an uniform
modulus of continuity on the quantile functions $F_x^{-1}$ whereas in
Theorem~\ref{thm:bootstrap} an uniform lower bound on the derivatives
is required. These two assumptions are consistent: they both say that
$F^{-1}_x$ is well behaved in a neighborhood of $m$ for all
$x$. However, \eqref{eq:inf.deriv} is stronger.
\end{remark}


\begin{proof}[Proof of Theorem \ref{thm:bootstrap}]
Let us first give the definition of Hadamard differentiability, for
which we refer the reader to, e.g., Section 3.9 of \cite{van1996weak}.
A map $\phi$ from a normed space $(\mathcal{D}, \| \cdot \|_{\mathcal{D}})$ to a
normed space $(\mathcal{E}, \| \cdot \|_{\mathcal{E}})$ is
Hadamard differentiable at the point $x \in \mathcal{D}$ if
there exists
a continuous linear map
$\phi_x': \mathcal{D} \to \mathcal{E}$ such that
\begin{equation}\label{eq:hadamard.def}
\left| \left|
\frac{ \phi(x + t h_t)-\phi(x)}{t}-\phi_x'(h)\right| \right|_\mathcal{E} \to 0,
\end{equation}
whenever $\| h_t  - h \|_\mathcal{D} \to 0$ as $t\to 0$.

We also recall the functional delta method
\cite[see, e.g.][Theorem 3.9.4]{van1996weak}:
suppose that $T_n$ takes values in $\mathcal{D}$,
$r_n \to \infty$,
$r_n(T_n - \theta)\rightsquigarrow T$,
and suppose that
$\phi$ is Hadamard differentiable at $\theta$.
Then
$r_n(\phi(T_n) - \phi(\theta))\rightsquigarrow \phi_\theta'(T)$.
Moreover,
by Theorem 3.9.11 of \cite{van1996weak} 
the bootstrap has the same limit.
More precisely, given $X_1,X_2,\ldots$,
we have that
$r_n(\phi(T_n^*) - \phi(T_n))$
converges conditionally in distribution to
$\phi_\theta'(T)$, in probability.
This implies the validity of the bootstrap confidence sets.

We begin our proof by defining $\mathcal{M}$ to be the space of finite, $\sigma$-finite signed measures on 
$(\mathbb{R}^d,\mathcal{B}^d)$ supported on the compact set  $S$
    and the 
mapping $\| \cdot \|_\mathcal{B} \colon \mathcal{M} \mapsto \mathbb{R}$ given
by
\[
    \| \mu \|_{\mathcal{B}} = \sup_{B \in \mathcal{B}} | \mu(B)|, \quad \mu \in
    \mathcal{M}.
    \]

    \begin{lemma}[Normed Space]
	\label{lem:norm}
	The pair $(\mathcal{M},\| \cdot \|_\mathcal{B})$ is a normed space.
    \end{lemma}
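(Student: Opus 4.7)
The plan is to verify the defining properties of a normed space in turn, dispatching the easy axioms quickly and concentrating on the only genuinely nontrivial point, positive definiteness.

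First, $\mathcal{M}$ is a real vector space under setwise addition and scalar multiplication: if $\mu,\nu \in \mathcal{M}$ and $c \in \mathbb{R}$, then $\mu + \nu$ and $c\mu$ are again finite signed Borel measures supported on the compact set $S$, so they lie in $\mathcal{M}$. This step is routine. The semi-norm properties of $\|\cdot\|_{\mathcal{B}}$ are equally direct: $\|\mu\|_{\mathcal{B}}$ is finite because $|\mu(B)| \leq |\mu|(\mathbb{R}^d) < \infty$, where $|\mu|$ is the total variation of $\mu$; non-negativity is immediate; absolute homogeneity is
$$
\|c\mu\|_{\mathcal{B}} = \sup_{B \in \mathcal{B}} |c\mu(B)| = |c|\,\|\mu\|_{\mathcal{B}};
$$
and the triangle inequality follows from
$$
\|\mu+\nu\|_{\mathcal{B}} \leq \sup_{B \in \mathcal{B}} \bigl(|\mu(B)| + |\nu(B)|\bigr) \leq \|\mu\|_{\mathcal{B}} + \|\nu\|_{\mathcal{B}}.
$$

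The main obstacle is the definiteness axiom, $\|\mu\|_{\mathcal{B}} = 0 \Rightarrow \mu = 0$, which is equivalent to the statement that the collection $\mathcal{B}$ of closed balls is a determining class for finite signed Borel measures on $\mathbb{R}^d$. A direct $\pi$--$\lambda$ argument is not available since closed balls are not closed under finite intersections. My plan is to use the Hahn--Jordan decomposition $\mu = \mu^+ - \mu^-$ and to fix an arbitrary point $x_0 \in \mathbb{R}^d$. The hypothesis $\mu(B(x_0,r)) = 0$ for every $r \geq 0$ forces the pushforwards of the positive measures $\mu^+$ and $\mu^-$ under the radial map $y \mapsto \|y - x_0\|$ to have identical cumulative distribution functions on $[0,\infty)$, hence to coincide as finite Borel measures there.

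It then follows by a standard pushforward argument that
$$
\int \phi\bigl(\|y - x_0\|\bigr) \, d\mu(y) = 0
$$
for every bounded Borel $\phi : [0,\infty) \to \mathbb{R}$. Choosing $\phi(r) = e^{-r^2/(2t)}$ yields $(p_t * \mu)(x_0) = 0$ for every $x_0 \in \mathbb{R}^d$ and every $t > 0$, where $p_t$ is the Gaussian heat kernel in $\mathbb{R}^d$. Testing against any $\psi \in C_c^\infty(\mathbb{R}^d)$ and letting $t \downarrow 0$, uniform convergence of $p_t * \psi$ to $\psi$ on a neighbourhood of the compact support of $\mu$ gives $\int \psi \, d\mu = 0$, so $\mu = 0$ as a signed Borel measure. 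Combining the vector-space check with the three norm axioms yields the conclusion that $(\mathcal{M}, \|\cdot\|_{\mathcal{B}})$ is a normed vector space.
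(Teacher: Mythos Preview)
Your proof is correct. For the positive-definiteness axiom---the only nontrivial part---you take a genuinely different route from the paper. Both arguments start from the Jordan decomposition $\mu = \mu^+ - \mu^-$ and the fact that $\mu^+(B) = \mu^-(B)$ for every closed ball $B$. The paper then argues topologically: since the support of a Borel measure on $\mathbb{R}^d$ is characterised via balls, $\mu^+$ and $\mu^-$ must have the same support, and the paper asserts that this contradicts mutual singularity, forcing both supports to be empty. Your argument is analytic: fixing $x_0$, you observe that the radial pushforwards of $\mu^{\pm}$ under $y \mapsto \|y - x_0\|$ have equal distribution functions and hence agree as measures on $[0,\infty)$; integrating the radial profile $\phi(r) = e^{-r^2/(2t)}$ then gives $p_t * \mu \equiv 0$ for every $t>0$, and the heat kernel as an approximate identity forces $\int \psi\, d\mu = 0$ for every $\psi \in C_c^\infty$, so $\mu = 0$.

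Your approach is a little longer but fully self-contained, and it sidesteps a subtle point in the paper's argument: the final ``contradiction'' there presumes that two mutually singular non-zero finite measures cannot share a support, which is false in general (take a Borel set $A \subset [0,1]$ with $0 < \lambda(A \cap I) < \lambda(I)$ for every subinterval $I$; then $\lambda|_A$ and $\lambda|_{[0,1]\setminus A}$ are mutually singular with common support $[0,1]$). Your heat-kernel route gives a clean, standard proof that closed balls form a determining class for finite signed measures on~$\mathbb{R}^d$.
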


    \begin{proof}
	It is clear that $\mathcal{M}$ is closed under addition
	and scalar multiplication, and so it is a  linear space. We then need to
	show that the mapping $\| \cdot \|_\mathcal{B}$ is a norm. It is
	immediate to see that it
	absolutely homogeneous and satisfies the triangle inequality: for any
	$\mu$ and $\nu$ in $\mathcal{M}$ and $c \in \mathbb{R}$, $\| c\mu
	\|_\mathcal{B} = |c| \| \mu \|_\mathcal{B}$ and  $\| \mu +
	\nu \|_\mathcal{B} \leq \| \mu \|_\mathcal{B} + \| \nu \|_\mathcal{B}$. 
	It remains to prove that $\| \mu \|_\mathcal{B} = 0$ if and only if
$\mu$ is identically zero, i.e.	$\mu(A)
	=0$ for all Borel sets $A$. One direction is immediate: if $\| \mu
	\|_\mathcal{B} > 0$, then there exists a ball $B$ such that $\mu(B)
	\neq 0$, so that $\mu \neq 0$. For the other direction, assume that $\mu \in \mathcal{M}$ is
	such  that $\| \mu \|_\mathcal{B} = 0$. 
By the Jordan decomposition, $\mu$ can be represented as the difference of two
singular, non-negative finite measures:
$\mu = \mu_+ - \mu_{-}$. The condition $\mu(B) = 0$ for all $B \in \mathcal{B}$
is equivalent to $\mu_{+}(B) = \mu_{-}(B)$ for all $B \in \mathcal{B}$. We will show
that this further implies that  the supports of  $\mu_+$ and $\mu_{-}$, denoted
with $S_{+}$
and $S_{-}$ respectively, are both
empty, and therefore that $\mu$ is identically zero.
Indeed, recall that the support of a Borel measure $\lambda$ over a topological space $\mathbb{X}$ is the
set of points $x \in \mathbb{X}$ all of whose open
neighborhoods have positive  $\lambda$-measure. In our setting this is equivalent to the
set of points in $\mathbb{R}^d$ such that all open balls centered at those points have positive
measure, which in turn is equivalent to the set of points such that all closed balls centered at those points have positive
measure.
Therefore, using the fact that $\mu^+(B) = \mu^{-}(B)$, for all $B \in
\mathcal{B}$,
\[
    S_{+} = \left\{ x \in \mathbb{R}^d \colon \mu_{+}(B(x,r))>0, \forall r>0 \right\} = \left\{ x \in \mathbb{R}^d   \colon
	\mu_{-}(B(x,r))>0, \forall r>0\right\} = S_{-}.
    \]
    where $B(X,r) = \{ y \in \mathbb{R}^d \colon \| y - x \| \leq r \}$. 
    It then follows that $S_+$ and $S_{-}$ must be empty, for  otherwise $\mu_+$ and
    $\mu_{-}$ would be mutually singular, non-zero measures with the same support, a
    contradiction.
%
%
%
%
\end{proof}

For our purposes, instead of using $\mathcal{M}$ it will be convenient to work
with the equivalent space of the
evaluations of all $\mu \in \mathcal{M}$ over the balls $\mathcal{B}$.  Formally,
let $\ell^\infty(\mathcal{B})$ denote the normed space of bounded functions on
$\mathcal{B}$ equipped with the supremum norm. Then, by 
Lemma~\ref{lem:norm}, the mapping from $\mathcal{M}$ into
$\ell^\infty(\mathcal{B})$ given by
\begin{equation}\label{eq:MtoD}
    \mu \mapsto \left( \mu \colon \mathcal{B} \to [0,1] \right)
    \end{equation}
    is a bijection on its image, which we will denote by $\mathcal{D}$. By
    definition, the
    supremum norm on $\mathcal{D}$ is exactly the norm $\| \cdot
    \|_\mathcal{B}$, so that $\mathcal{D} \subset \ell^\infty(\mathcal{B})$ equipped with the supremum norm is a
    normed space.  With a slight abuse of notation, we will identify measures
    in $\mathcal{M}$ with the corresponding points in $\mathcal{D}$  and write $\mu \in \mathcal{D}$ to
denote the signed measure $\mu$ corresponding to the point $\{\mu(B),
    B \in \mathcal{B}\}$ in $\mathcal{D}$.

 The advantage of using the space $\mathcal{D}$ instead of $\mathcal{M}$ is that
 the  convergence of the empirical process
 $(\sqrt{n}(\P_n(B)-P(B)):\ B\in{\cal B})$ to the Brownian bridge takes place in
 $\mathcal{D}$, as required by the delta-method for the bootstrap
 \citep[see][Theorem 3.9.]{van1996weak}.

    For a signed measure $\mu$ in $\mathcal{M}$, $x \in
    \mathbb{R}^d$ and $r > 0$, we set $F_{\mu,x}(r) = \mu\left( B(x,\sqrt{r})
    \right)$. Notice if $P$ is a probability measure, then  $F_{P,x}$ is the
    c.d.f. of the univariate random variable $\|X -
    x\|^2$, with $X \sim P$. For a general $\mu \in \mathcal{M}$,
    $F_{\mu,x}$ is a cadlag function, though not monotone. For any $m \in \mathbb{R}$, $\mu$ in $\mathcal{M}$ and $x \in
    \mathbb{R}^d$, set
    \[
	F^{-1}_{\mu,x}(m) = \inf\Big\{ r > 0 \colon \mu(B(x,\sqrt{r})) \geq m
    \Big\},
	\]
    where the infimum over the empty set is define to be $\infty$. If $P$ is a
    probability measure and $m \in (0,1)$ then 	$F^{-1}_{P,x}(m)$ is just the $m$-th
    quantile of the random variable $\| X - x\|^2$, $X \sim P$.

    Fix a $m \in (0,1)$ and let $\mathcal{M}_m = \mathcal{M}_m(\mathcal{X})$ denote the subset of
    $\mathcal{M}$ consisting of all finite signed measure $\mu$ such that, there
    exists a value of $r>0$ for which 
    $\inf_{x \in \mathcal{X}} \mu\left( B(x,\sqrt{r}) \right) \geq m$. Thus, for any $\mu \in
    \mathcal{M}_m$ and $x \in \mathcal{X}$, $F^{-1}_{\mu,x}(m) < \infty$. Let
    $\mathcal{D}_m$ be the image of $\mathcal{M}_m$ by the mapping
    \eqref{eq:MtoD}.

    Let $\mathcal{E}$ the set of
    bounded, real-valued
    function  on $\mathcal{X}$, a normed space with respect to the sup norm.
    Finally, we define $\phi \colon \mathcal{D}_m \rightarrow \mathcal{E}$ to be
    the mapping 
    \begin{equation}\label{eq:dtm.map}
	\mu \in \mathcal{D}_m \mapsto \phi(\mu)(x) =  F^{-1}_{\mu,x}(m) - \frac 1 m
	\int_{0}^{F^{-1}_{\mu,x}(m)} F_{\mu,x}(u) du, \quad x \in \mathcal{X} 
    \end{equation}
Notice that if $P$ is a probability measure, simple algebra shows that $\phi(P)(x)$ is the square value
of the distance to
measure of $P$ at the point $x$, i.e. $\delta^2_p(x)$; see 
Figure~\ref{fig:cdfHadam}.

Below we will show that, for any probability measure $P$, the mapping \eqref{eq:dtm.map} is
Hadamard differentiable at $P$.

For an arbitrary $Q \in \mathcal{Q}$, let $\{ Q_t \}_{t>0} \subset \mathcal{D} $
be a sequence of signed measure such that $\lim_{t \rightarrow 0} \| Q_t -
Q\|_{\mathcal{B}} = 0$ and such that $P + t Q_t \in \mathcal{D}_m$ for all $t$.
Sequences of this form exist: since $\| t Q_t \|_{\mathcal{B}}
\rightarrow 0$ as $t \rightarrow 0$, for any arbitrary $0 <
\epsilon < 1 - m$ and all $t$ small enough,
\[
    \inf_{x \in \mathcal{X}} \left(P + t Q_t \right)\left( B\left( x,F_{P,x}^{-1}(m+\epsilon) \right) \right)
\geq m + \epsilon/2.
    \]
    By the boundedness of $\mathcal{X}$ and compactness of $S$,
this implies that there exists a
    number $r > 0$ such that 
\[
\inf_x \left(P + t Q_t \right)\left( B\left( x,r \right)
\right) \geq m,
    \]
so the image of $P+tQ_t$ by \eqref{eq:dtm.map} is an element of
$\mathcal{E}$ (i.e. it is a bounded function).


\begin{figure}[htb]
    \begin{center}
    \includegraphics[height=1.5in]{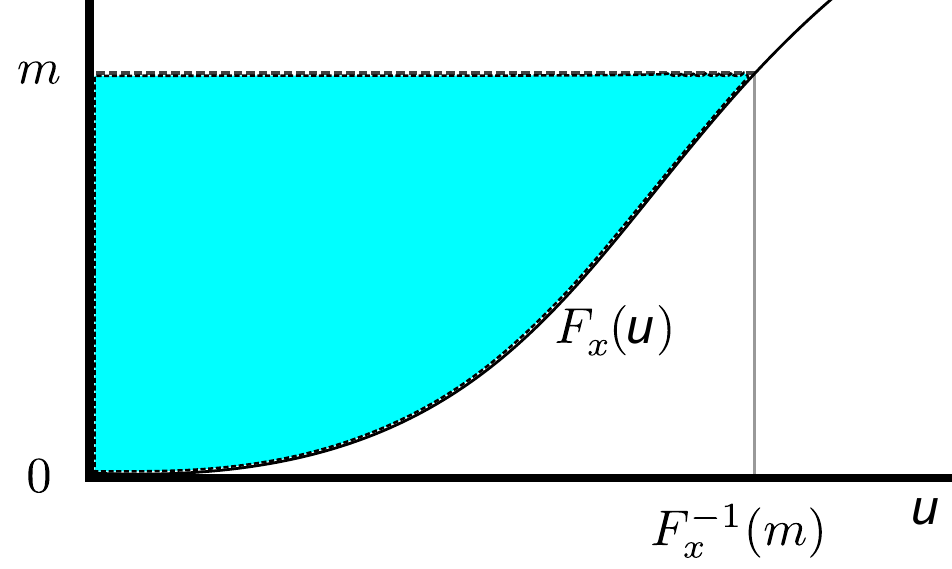}
    \end{center}
    \caption{The integral $\int_0^m F^{-1}_{P,x}(u)du$ is equivalent to $m
    F_{P,x}^{-1}(m) - \int_0^{F_{P,x}^{-1}(m)} F_{P,x}(u) du$.}
    \label{fig:cdfHadam}
\end{figure}

For sake of readability, below we will write $F_{x,t}$ and $F^{-1}_{x,t}(m)$ for $F_{P + t Q_t,x}$ and
$F^{-1}_{P+tQ_t,x}(m)$, respectively, and $F_x$ for $F_{P,x}$. 
Also, for each $x \in \mathcal{X}$ and $z \in \mathbb{R}_+$ 
we 
the set $\mathcal{A}_{x,z}=\{ y: \Vert y-x \Vert^2 \leq z \}$ and
$F_{x,t}(z)=(P+tQ_t)(\mathcal{A}_{x,z})$.

Thus, 
\[
\phi(P)(x)  =\delta^2_P(x)= F_x^{-1}(m) - \frac{1}{m} \int_0^{F_x^{-1}(m)} F_x(u) du
\]
    and
    \[
\phi(P+t Q_t)(x) = F_{x,t}^{-1}(m) - \frac{1}{m} \int_0^{F_{x,t}^{-1}(m)}
F_{x,t}(u) du.
\]
Some algebra show that, for any $x$,
\begin{equation}
\frac{\phi(P)(x)- \phi(P+tQ_t)(x)}{t} = \frac{F_x^{-1}(m) - F_{x,t}^{-1}(m)}{t}
- \frac{A(x,t)}{mt},
\label{eq:hadamard}
\end{equation}
where
$$
A(x,t) =  \left\{
    \begin{array}{cc}
 \int_0^{F_x^{-1}(m)} \left[ F_x(u) - F_{x,t}(u) \right] du  -
   \int_{F_x^{-1}(m)}^{F_{x,t}^{-1}(m)} F_{x,t}(u) du & \text{ if } F_{x}^{-1}(m) \leq F_{x,t}^{-1}(m)\\
 \int_0^{F_x^{-1}(m)} \left[ F_x(u) - F_{x,t}(u) \right] du  +
 \int_{F_{x,t}^{-1}(m)}^{F_{x}^{-1}(m)} F_{x,t}(u) du & \text{ if }
    F_{x}^{-1}(m) > F_{x,t}^{-1}(m).\\
\end{array}
\right.
$$
To demonstrate Hadamard differentiability (see \ref{eq:hadamard.def}), we will prove that, as $t \to 0$, the
expression in \eqref{eq:hadamard}, as a bounded function of $x \in \mathcal{X}$, will
converge in $\mathcal{E}$ to the bounded function
\[
    x \in \mathcal{X} \mapsto - \frac{1}{m} \int_0^{F_x^{-1}(m)} Q(\mathcal{A}_{x,u}) du.
    \]

Towards that end, we have, for all $t$ and any $x \in \mathcal{X}$,
\begin{align*}
\frac{A(x,t)}{t} &= \frac{1}{t} \left[ \int_0^{F_x^{-1}(m)} t Q_t(\mathcal{A}_{x,u}) du - \int_{F_x^{-1}(m)}^{F_{x,t}^{-1}(m)} (P+tQ_t)(\mathcal{A}_{x,u}) du \right] \\
&= \int_0^{F_x^{-1}(m)} Q_t(\mathcal{A}_{x,u}) du - \frac{1}{t} \int_{F_x^{-1}(m)}^{F_{x,t}^{-1}(m)} P(\mathcal{A}_{x,u}) du - \int_{F_x^{-1}(m)}^{F_{x,t}^{-1}(m)} (Q_t)(\mathcal{A}_{x,u}) du \\
& \equiv A_1(x,t) - A_2(x,t) -A_3(x,t),
\end{align*}
where, for $a < b$, we write $\int_b^a = -\int_a^b$.

To handle the three terms appearing in the last display, we first state and
prove two useful results.

\begin{lemma}\label{lem:AS}
Under the assumptions of the theorem,
\begin{equation}\label{eq:AS}
  \sup_{x \in \mathcal{X}} \frac{F_x^{-1}(m) -
    F_{x,t}^{-1}(m)}{t} = O(1),
	\end{equation}
	as $t \rightarrow 0$.
In particular,
\begin{equation}\label{eq:m.conv}
    \lim_{t \rightarrow 0}    \sup_{x \in \mathcal{X}} \left| m -  F_x\left( F_{x,t}^{-1}(m) \right)
\right| = 0.
    \end{equation}
and
\begin{equation}\label{eq:vanish}
    \lim_{t \rightarrow 0} \sup_{x \in \mathcal{X}} |F_{x}^{-1}(m) -
F_{x,t}^{-1}(m)| = 0.
\end{equation}
\end{lemma}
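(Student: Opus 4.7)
The plan is to combine (i) a uniform closeness estimate between $F_{x,t}$ and $F_x$ coming from $\|tQ_t\|_\mathcal{B}\to 0$, with (ii) the uniform ``inverse modulus'' of $F_x$ at the point $F_x^{-1}(m)$ provided by assumption \eqref{eq:inf.deriv}, which I read as a uniform-in-$x$ lower bound on the slope of $F_x$ at $F_x^{-1}(m)$.

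First I would observe that for every $x\in\mathcal{X}$ and every $r>0$, $F_{x,t}(r)-F_x(r)=tQ_t(B(x,\sqrt{r}))$, so by the very definition of the norm on $\mathcal{D}$,
$$\sup_{x\in\mathcal{X},\,r>0}|F_{x,t}(r)-F_x(r)|\;\leq\;\|tQ_t\|_\mathcal{B}\;=\;O(t).$$
By passing to increasing limits over closed balls of slightly smaller radius, the same estimate controls the left limits $|F_{x,t}(r-)-F_x(r-)|$ as well.

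Next, abbreviate $r_0=F_x^{-1}(m)$ and $r_t=F_{x,t}^{-1}(m)$. Since $F_{x,t}$ is cadlag, its generalized inverse satisfies $F_{x,t}(r_t-)\leq m\leq F_{x,t}(r_t)$. Combining with the previous step yields
$$F_x(r_t)\;\geq\;m-\|tQ_t\|_\mathcal{B}\qquad\text{and}\qquad F_x(r_t-)\;\leq\;m+\|tQ_t\|_\mathcal{B}.$$
Any jump of $F_x$ at $r_t$ is itself of size at most $2\|tQ_t\|_\mathcal{B}$, by applying the $\mathcal{B}$-norm bound to the closed ball minus its open counterpart; absorbing it into a constant I conclude
$$\sup_{x\in\mathcal{X}}\bigl|m-F_x(r_t)\bigr|\;\leq\;C_1\|tQ_t\|_\mathcal{B}.$$
Writing $\eta_x=r_t-r_0$, this is exactly $|F_x(F_x^{-1}(m))-F_x(F_x^{-1}(m)+\eta_x)|\leq C_1\|tQ_t\|_\mathcal{B}$, and applying \eqref{eq:inf.deriv} pointwise in $x$ gives $\sup_{x\in\mathcal{X}}|\eta_x|\leq CC_1\|tQ_t\|_\mathcal{B}=O(t)$. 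Dividing by $t$ and using $\|Q_t\|_\mathcal{B}\to\|Q\|_\mathcal{B}<\infty$ proves \eqref{eq:AS}; letting $t\to 0$ in the same bound gives \eqref{eq:vanish}, and the displayed bound on $|m-F_x(r_t)|$ is exactly \eqref{eq:m.conv}.

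The main obstacle is the proper treatment of the discontinuities of $F_{x,t}$: because $P+tQ_t$ is a signed measure, $F_{x,t}$ is cadlag but not monotone, so I cannot simply assert $F_{x,t}(r_t)=m$, and $F_x$ itself is not a priori continuous at $r_t$. The trick is to keep track of both $F_{x,t}(r_t)$ and $F_{x,t}(r_t-)$ and to exploit the fact that the gap between the value of any element of $\mathcal{D}$ on a closed ball and on the corresponding open ball is itself controlled by the $\mathcal{B}$-norm, which avoids imposing any atomlessness assumption on $P$. Once the sandwich in Step~2 is in place, the rest is a direct application of the uniform slope hypothesis \eqref{eq:inf.deriv}.
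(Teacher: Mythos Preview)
Your approach is essentially the paper's: both arguments reduce to showing $|m-F_x(F_{x,t}^{-1}(m))|=O(t)$ uniformly in $x$ and then invoking \eqref{eq:inf.deriv}. The paper simply writes $m=(P+tQ_t)(\mathcal{A}_{x,t})$ and proceeds; you are more scrupulous in noting that for a signed measure one only has $F_{x,t}(r_t-)\le m\le F_{x,t}(r_t)$, which is a genuine improvement in rigor.

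There is, however, a real gap in your handling of the jump. From $F_x(r_t)\ge m-\|tQ_t\|_\mathcal{B}$ and $F_x(r_t-)\le m+\|tQ_t\|_\mathcal{B}$ you cannot conclude that the jump $F_x(r_t)-F_x(r_t-)$ is at most $2\|tQ_t\|_\mathcal{B}$: those two inequalities only give a \emph{lower} bound on the jump (trivially $\ge 0$ since $F_x$ is monotone), not an upper bound. Your appeal to ``the $\mathcal{B}$-norm bound applied to the closed ball minus its open counterpart'' does not work either: $\partial B(x,\sqrt{r_t})$ is a sphere, not a ball, and $P(\partial B)$ has nothing to do with $\|tQ_t\|_\mathcal{B}$. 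Without an upper bound on $F_x(r_t)$ you have not established $|m-F_x(r_t)|\le C_1\|tQ_t\|_\mathcal{B}$ when $r_t>r_0$.

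The fix is a case split using that $F_x$ is a genuine cdf and is continuous at $r_0=F_x^{-1}(m)$ (by the differentiability assumption, so $F_x(r_0)=m$). If $r_t\le r_0$, monotonicity gives $F_x(r_t)\le m$, hence $|m-F_x(r_t)|\le\|tQ_t\|_\mathcal{B}$. If $r_t>r_0$, avoid $F_x(r_t)$ altogether: for every $r\in(r_0,r_t)$ one has $F_{x,t}(r)<m$, hence $m\le F_x(r)<m+\|tQ_t\|_\mathcal{B}$, and applying \eqref{eq:inf.deriv} to $\eta=r-r_0$ and letting $r\uparrow r_t$ gives $r_t-r_0\le C\|tQ_t\|_\mathcal{B}$. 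Note also that \eqref{eq:inf.deriv} is stated only for small $\eta$, so you need to know $|r_t-r_0|\to 0$ \emph{before} applying it; the paper establishes \eqref{eq:m.conv} and \eqref{eq:vanish} first for exactly this reason, and you should do the same.
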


\begin{proof}
We first prove \eqref{eq:vanish}. 
Let $\mathcal{A}_{x,t} = \{ y \colon \| y - x \|^2 \leq F^{-1}_{x,t}(m)\}$. Then, 
\begin{equation}\label{eq:basic}
m = (P + t Q_t)(\mathcal{A}_{x,t} ) = F_x\left(  F^{-1}_{x,t}(m)
 \right) +  t Q_t (\mathcal{A}_{x,t}).
 \end{equation}
Since 
$\sup_{x \in \mathcal{X}} t Q_t (\mathcal{A}_{x,t}) \rightarrow 0$ as $t \rightarrow 0$ we obtain \eqref{eq:m.conv}.
 The claim \eqref{eq:vanish} follows from \eqref{eq:m.conv} using the facts that $F_x$ is monotone for each $x \in
\mathcal{X}$
and that $\inf_{x \in \mathcal{X}} F_x'(F^{-1}_x(m))  >  0$. 

To show \eqref{eq:AS}, the relation \eqref{eq:basic} combined with the fact that
$m = F_x\left( F^{-1}_x(m) \right)$ for all $x$
 yields that
	\[
	    \Big| F_x\left( F^{-1}_x(m) \right) - F_x\left(  F^{-1}_{x,t}(m)
 \right) \Big| =  t |Q_t (\mathcal{A}_{x,t})|,
	    \]
	    for all $x \in \mathcal{X}$. By \eqref{eq:vanish}
 and \eqref{eq:inf.deriv},
	    \[
		\left|  F^{-1}_x(m) -  F^{-1}_{x,t}(m) \right|  \leq C t |Q_t(S)|
		\]
		uniformly in $x \in \mathcal{X}$ and for all $t$ small enough. The relation
	\eqref{eq:AS} follows from the fact that $|Q_t(S)| = |Q(S)| + o(1) <
	\infty$.
\end{proof}

We now analyze the terms $ A_1(x,t)$, $ A_2(x,t)$ and $ A_3(x,t)$ separately.

\begin{itemize}
    \item {\bf Term $ A_1(x,t)$.}
	As $t \rightarrow 0$,  $Q_t \rightarrow Q$ and, uniformly in $x \in \mathcal{X}$ and $z >
	0$,
$|Q_t(\mathcal{A}_{x,z})| \leq |Q_t(S)| = |Q(S)| + o(1) < \infty $. Furthermore,
$\sup_{x \in \mathcal{X}} F^{-1}_x(m) < \infty$ by compactness of $\mathcal{X}$
and $S$. Therefore, using the dominated convergence
theorem,
\begin{equation}
    \lim_{t \to 0} \sup_{x \in \mathcal{X}} \left| \frac{A_1(x,t)}{m} - \frac{1}{m}\int_0^{F_x^{-1}(m)}
    Q(\mathcal{A}_{x,u}) du \right| = 0.
\label{eq:A1}
\end{equation}
\item {\bf Term $A_2(x,t)$.} Since  $P(\mathcal{A}_{x,u})$ is non-decreasing in $u$ for
    all $x$, we have
    \[
	\frac{F_{x,t}^{-1}(m) - F_{x}^{-1}(m)}{t} \times \min \left\{ m,  F_x(
     F^{-1}_{x,t}(m)) \right\} \leq A_2(x,t) \leq
 \frac{F_{x,t}^{-1}(m) - F_{x}^{-1}(m)}{t} \times \max \left\{ m, F_x(
     F^{-1}_{x,t}(m))\right\}.
	\]

%

	Using \eqref{eq:m.conv},
	we conclude that
\begin{equation}\label{eq:A2}
    \lim_{t \rightarrow 0} \sup_{x \in \mathcal{X}} \Big| \frac{F_x^{-1}(m) - F_{x,t}^{-1}(m)}{t}
 - \frac{A_2(x,t)}{m}  \Big| = 0.
 \end{equation}

 \item {\bf Term $A_3(x,t)$.} Finally, since  $|Q_t(S)| \leq |Q(S)| + o(1)$ as $t \rightarrow 0$ and using \eqref{eq:vanish},
 we obtain
 \begin{equation}
 \sup_x|  A_3(x,t) | = O \Big(  
 \sup_x \left| F_{x,t}^{-1}(m) - F_{x}^{-1}(m) \right| \Big) = o(1)
\label{eq:A3}
\end{equation}
as $t \rightarrow 0$. 

\end{itemize}
Therefore, from \eqref{eq:hadamard}, \eqref{eq:A1}, \eqref{eq:A2}, and \eqref{eq:A3},
\[
    \lim_{t \rightarrow 0} \sup_x \Big| \frac{\phi(P)(x)- \phi(P+tQ_t)(x)}{t}
 + \frac{1}{m} \int_0^{F_x^{-1}(m)} Q(\mathcal{A}_{x,u}) du   \Big| = 0,
    \]
which shows that 
\[
    x \in \mathcal{X} \mapsto - \frac{1}{m} \int_0^{F_x^{-1}(m)} Q(\mathcal{A}_{x,u}) du
    \]
is the Hadamard derivative of $\delta^2$ at $P$.

The statement of the theorem now follows from an application of Theorem 3.9.11 in
\cite{van1996weak} and the fact that, since ${\cal B}$ is a Donsker class,
the empirical process
$(\sqrt{n}(\P_n(B)-P(B)):\ B\in{\cal B})$
converges to the Brownian bridge $\mathbb{B}$ on $\mathcal{B}$
with covariance kernel
$\kappa(B,C) = P(B\cap C)-P(B)P(C)$.
\end{proof}

\subsection{Significance of Topological Features}\label{ss:significance}

Fasy et al (2014) showed how to use
the bootstrap to test the significance
of a topological feature.
They did this for distance functions and density
estimators but the same idea works for DTM as we now explain.

Given a feature
with birth and death time $(u,v)$,
we will say that the feature is significant if
$| v-u | > 2c_\alpha/\sqrt{n}$
where
$c_\alpha$ is defined by
$$
\mathbb{P}(\sqrt{n} \Vert \edtm (x) - \dtm(x) \Vert_\infty > c_\alpha) = \alpha.
$$
In particular, $c_\alpha$ can be estimated
from the bootstrap as we showed in the previous section.
Specifically,
define $\hat c_\alpha$ by
$$
\mathbb{P}(\sqrt{n} \Vert \edtm^* (x) - \edtm(x) \Vert > \hat c_\alpha|X_1,\ldots, X_n) = \alpha.
$$
Then 
$\hat c_\alpha$ is a consistent estimate of $c_\alpha$.

To see why this makes sense,
let ${\cal D}$ be the set of persistence diagrams.
Let $D \equiv D_{\dtm}$ be the true diagram and let $\hat D \equiv 
D_{\edtm}$
be the estimated diagram.
Let
$$
{\cal C}_n = \left\{E\in {\cal D}:\ W_{\infty}(\hat D,E)\leq 
\frac{\hat c_\alpha}{\sqrt{n}}\right\}.
$$
Then
$$
\mathbb{P}(D \in {\cal C}_n) = \mathbb{P}\left( W_{\infty}(D, \hat D) \leq 
\frac{\hat c_\alpha}{\sqrt{n}} \right)  
\geq \mathbb{P}(\sqrt{n} \Vert \edtm (x) - \dtm(x) \Vert \leq \hat c_\alpha) \to 
1-\alpha
$$
as $n\to \infty$.
Now
$|v-u| > 2\hat c_\alpha/\sqrt{n}$
if and only if
the feature cannot be matched to the diagonal
for any diagram in ${\cal C}$.
(Recall that the diagonal corresponds to features with zero lifetime.)

We can visualize the significant features
by putting a band of size
$2c_\alpha/\sqrt{n}$ around the diagonal of $\hat D$.
See Figure~\ref{fig:DTMcassini}.

\begin{figure}[!ht]
\begin{center}
\includegraphics[scale=0.89]{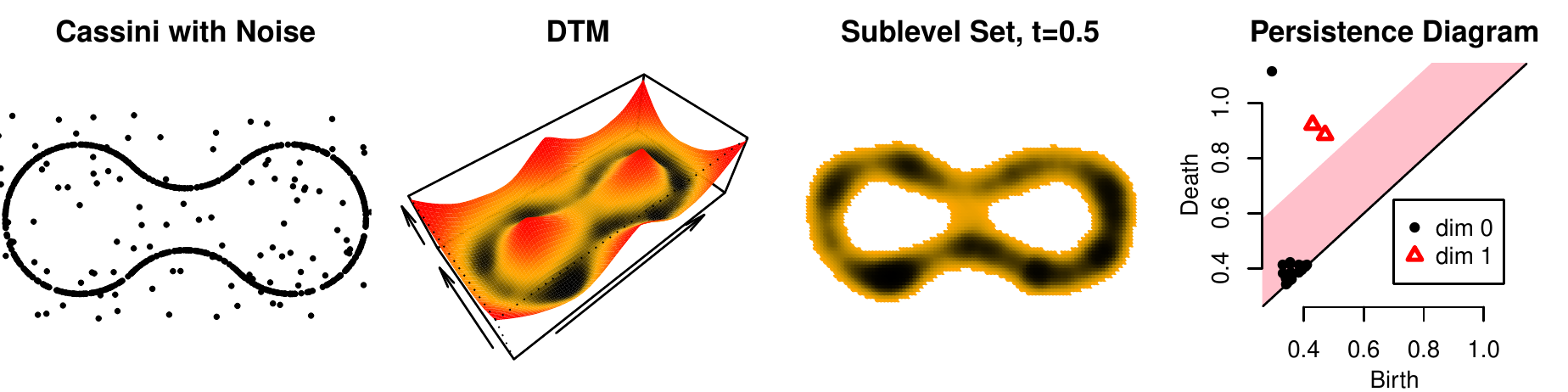}
\end{center}
\caption{The left plot shows a sample from the Cassini curve together
with a few outliers.
The second plot is the empirical DTM.
The third plot is one sub-level set of the DTM.
The last plot is the persistence diagram.
Points not in the shaded band are significant features.
Thus, this method detects one significant connected component and two
significant loops in the sublevel set filtration of the empirical DTM function.}
\label{fig:DTMcassini}
\end{figure}

\section{Theory for Kernels}
\label{section::kernels}

In this section, we consider an alternative
to the DTM, namely, kernel based methods.
This includes the kernel distance and the kernel
density estimator.


\cite{phillips2014goemetric}
suggest using the kernel distance
for topological inference.
Given a kernel $K(x,y)$, the 
kernel distance between two probability measures
$P$ and $Q$ is
$$
D_K(P,Q) = \sqrt{ \iint K(x,y)dP(x) dP(y) +
\iint K(x,y)dQ(x) dQ(y) - 2\iint K(x,y)dP(x) dQ(y) } .
$$
It can be shown that
$D_K(P,Q) = \Vert \mu_P - \mu_Q \Vert$
for vectors
$\mu_P$ and $\mu_Q$
in an appropriate reproducing kernel Hilbert space (RKHS).
Such distances are popular
in machine learning;
see \cite{sriperumbudur2009kernel}, for example.

Given a sample
$X_1,\ldots, X_n\sim P$, 
let $P_n$ be the probability measure that put mass $1/n$ on each~$X_i$.
Let $\vartheta_x$ be the Dirac measure that puts mass one on $x$.
\cite{phillips2014goemetric}
suggest using the discrete kernel distance
\begin{equation}
\hat D_K(x) \equiv D_K(P_n, \vartheta_x) = 
\sqrt{\frac{1}{n^2}\sum_{i=1}^n \sum_{j=1}^n K(X_i,X_j) +  K(x,x) -  
\frac{2}{n}\sum_{i=1}^n K(x,X_i)}\\
\end{equation}
for topological inference.
This is an estimate of the population quantity
$$
D_K(x) \equiv D_K(P, \vartheta_x) = 
\sqrt{\iint K(z,y) dP(z)dP(y) +  K(x,x) - 2 \int K(x,y) dP(y) }.
$$

The most common choice of kernel
is the Gaussian kernel
$K(x,y) \equiv K_h(x,y) = \exp \left(- \frac{\Vert x-y \Vert^2}{2h^2} \right)$,
which has one tuning parameter $h$.
We recall that, in topological inference,
we generally do not let $h$ tend to zero. See the related discussion in Section 4.4 of \cite{fasy2014statistical}.

Recall that the kernel density estimator is defined by
$$
\hat p_h(x) = \frac{1}{n (\sqrt{2 \pi} h)^{\dimension}} \sum_i K(x,X_i).
$$
We see that
\begin{align*}
\hat D_k^2(x) 
&=h^{\dimension}\left(\frac{(\sqrt{2\pi})^{\dimension}}{n}\sum_i \hat p_h(X_i) + 
h^{-{\dimension}}K(0,0) - 2 (\sqrt{2\pi})^{\dimension} \hat p_h(x)\right) \\
&=h^{\dimension}\left(\frac{(\sqrt{2\pi})^{\dimension}}{n}\sum_i \left[\hat 
p_h(X_i)-p(X_i)\right] + \frac{(\sqrt{2\pi})^{\dimension}}{n}\sum_i p(X_i)
+ h^{-{\dimension}}K(0,0) - 2 (\sqrt{2\pi})^{\dimension} \hat p_h(x)\right) \\
&= (\sqrt{2\pi})^{\dimension} h^{\dimension}(1+o_P(1)) + 
O_P \left(\sqrt{\frac{\log n}{n}} \right) + K(0,0) - 2 (\sqrt{2\pi} h)^{\dimension} \hat p_h(x).
\end{align*}
Here, we used the fact that
$n^{-1}\sum_{i=1}^n p(X_i) = 1+ o_P(1)$ and
$||\hat p_h - p||_\infty = O_P(\sqrt{\log n/n})$.

We see that up to small order terms,
the sublevel sets of
$D_K(x)$ are a rescaled version of the super-level sets of
the kernel density estimator.
Hence, the kernel distance approach and the density estimator
approach are essentially the same, up to a rescaling.
However,
$D^2_K$ has some nice properties;
see \cite{phillips2014goemetric}.

The limiting properties of
$\hat D_K^2(x)$ follow
immediately from
well-known properties of kernel density estimators.
In fact, the conditions needed for
$\hat D_K^2$ are weaker than for the DTM.

\begin{theorem}[Limiting Behavior of Kernel Distance]
We have that
$$
\sqrt{n}(\hat D_K^2 - D_K^2)\rightsquigarrow \mathbb{B},
$$
where $\mathbb{B}$ is a Brownian bridge.
The bootstrap version converges to the same limit,
conditionally almost surely.
\end{theorem}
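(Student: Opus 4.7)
The strategy is to linearize $\hat D_K^2(x)$ so that $\sqrt n(\hat D_K^2(x)-D_K^2(x))$ becomes, up to a negligible remainder, a single empirical process indexed by $x$, and then invoke Donsker-class weak convergence together with standard bootstrap validity for such classes.

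Since the terms $K(x,x)$ cancel, we may write
\begin{equation*}
\hat D_K^2(x)-D_K^2(x) = (V_n - \theta) - 2(\mathbb{P}_n - P)(K_x),
\end{equation*}
where $V_n = n^{-2}\sum_{i,j}K(X_i,X_j)$, $\theta = \iint K(z,y)\,dP(z)\,dP(y)$, and $K_x(\cdot)=K(x,\cdot)$. Replacing the V-statistic $V_n$ by the associated U-statistic $U_n = (n(n-1))^{-1}\sum_{i\neq j}K(X_i,X_j)$ costs at most $O(1/n)$ because $K$ is bounded (and this error does not depend on $x$). Hoeffding's projection then gives
\begin{equation*}
\sqrt n(U_n - \theta) = 2\sqrt n (\mathbb{P}_n - P)(K_1) + o_P(1),
\end{equation*}
with $K_1(y)=\int K(z,y)\,dP(z)$. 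Setting $f_x = K_1 - K_x$, we conclude
\begin{equation*}
\sqrt n\bigl(\hat D_K^2(x)-D_K^2(x)\bigr) = 2\sqrt n(\mathbb{P}_n - P)(f_x) + o_P(1),
\end{equation*}
where the remainder is uniform in $x \in \mathcal{X}$ (the non-uniform part is constant in $x$).

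Next I would verify that $\mathcal F = \{f_x : x \in \mathcal{X}\}$ is a $P$-Donsker class. For the Gaussian kernel at fixed bandwidth (and more generally any bounded Lipschitz kernel), the map $x \mapsto K(x,\cdot)$ has a bounded envelope and is uniformly Lipschitz on the compact domain $\mathcal X$, so $\mathcal F$ has polynomial bracketing numbers; see, for instance, Example~19.7 in \cite{van2000asymptotic} or Theorem~2.7.11 in \cite{van1996weak}. Hence $2\sqrt n(\mathbb{P}_n - P)(f_x) \rightsquigarrow 2\mathbb G_P$ in $\ell^\infty(\mathcal{X})$ for a tight centered Gaussian process $\mathbb G_P$ with covariance $\mathrm{Cov}_P(f_x(X), f_y(X))$, which is the $P$-Brownian bridge $\mathbb B$ of the statement. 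For the bootstrap, the same Donsker property yields conditional weak convergence of $\sqrt n(\mathbb{P}_n^* - \mathbb{P}_n)$ to $\mathbb G_P$ almost surely, by Theorem~3.6.1 of \cite{van1996weak}; the V-to-U-statistic reduction for the bootstrapped kernel distance is literally the same computation because it only uses boundedness of $K$.

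The main obstacle is the Donsker verification for $\mathcal F$: once one has the bounded envelope and Lipschitz dependence of $K(x,\cdot)$ on $x$, which is automatic for the Gaussian kernel at fixed $h$ on the compact $\mathcal X$, the rest is standard empirical-process bookkeeping, and both the V-statistic linearization and the bootstrap step are robust to the choice of kernel so long as $K$ is bounded.
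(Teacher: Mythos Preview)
Your proposal is correct. The paper does not actually give a proof of this theorem: it simply states that ``the proof \ldots\ is based on existing theory and so is omitted.'' Your write-up supplies exactly the kind of argument the authors are gesturing at---linearize the V-statistic via Hoeffding's projection so that $\sqrt n(\hat D_K^2(x)-D_K^2(x))$ reduces to $2\sqrt n(\mathbb P_n-P)(f_x)$ plus an $x$-independent $o_P(1)$, verify that $\{f_x:x\in\mathcal X\}$ is Donsker from the Lipschitz dependence of the Gaussian kernel on $x$ over the compact $\mathcal X$, and invoke the Gin\'e--Zinn bootstrap theorem for Donsker classes. One very minor point: Theorem~3.6.1 in \cite{van1996weak} gives conditional convergence \emph{in probability}; the almost sure conditional statement in the paper would strictly require the stronger moment condition of Theorem~3.6.2, which is of course trivially satisfied here since the envelope is bounded.
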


The proof of the above theorem is based on 
existing theory and so is omitted.
This theorem justifies using the bootstrap
to construct $L_\infty$ bands for $\mathbb{E}(\hat p)$
or $D_K$.

As we mentioned before,
for topological inference, we keep the bandwidth $h$ fixed.
Thus, it is important to keep in mind that we view
$\hat p_h$ as an estimate of
$p_h(x) = \mathbb{E}[\hat p_h(x)] = \int K_h(x,u) dP(u)$.

\section{The Bottleneck Bootstrap}
\label{section::BB}

More precise inferences can be
obtained by directly bootstrapping the persistence diagram.
Define $\hat t_\alpha$ by
\begin{equation}
\mathbb{P}(\sqrt{n}W_\infty(\hat D^*, \hat D) > \hat t_\alpha \, |\, X_1,\ldots, X_n) = \alpha.
\end{equation}
The quantile 
$\hat t_\alpha$ can be estimated by Monte Carlo.
We then use a band of size $2\hat t_\alpha$
on the diagram $D$.

In the following, we show that $\hat t_\alpha$ consistently
estimates the population value $t_\alpha$ defined by
\begin{equation}
\mathbb{P}(\sqrt{n}W_\infty(\hat D, D) >  t_\alpha ) = \alpha.
\end{equation}

The reason why the bottleneck bootstrap can lead to more precise
inferences than the functional bootstrap from the previous section
is that the functional bootstrap uses the fact that
$W_\infty(\hat D,D) \leq ||\hat\delta - \delta||_\infty$ and finds an upper
bound for $||\hat\delta - \delta||_\infty$.
But in many cases the inequality is not sharp
so the confidence set can be very conservative.
Moreover, we can obtain different critical values
for different dimensions (connected components, loops, voids, ...)
and so the inferences are tuned to the specific features we are estimating.
See Figure~\ref{fig:DTMcassiniBoot}.

\begin{figure}[!ht]
\begin{center}
\includegraphics[scale=0.89]{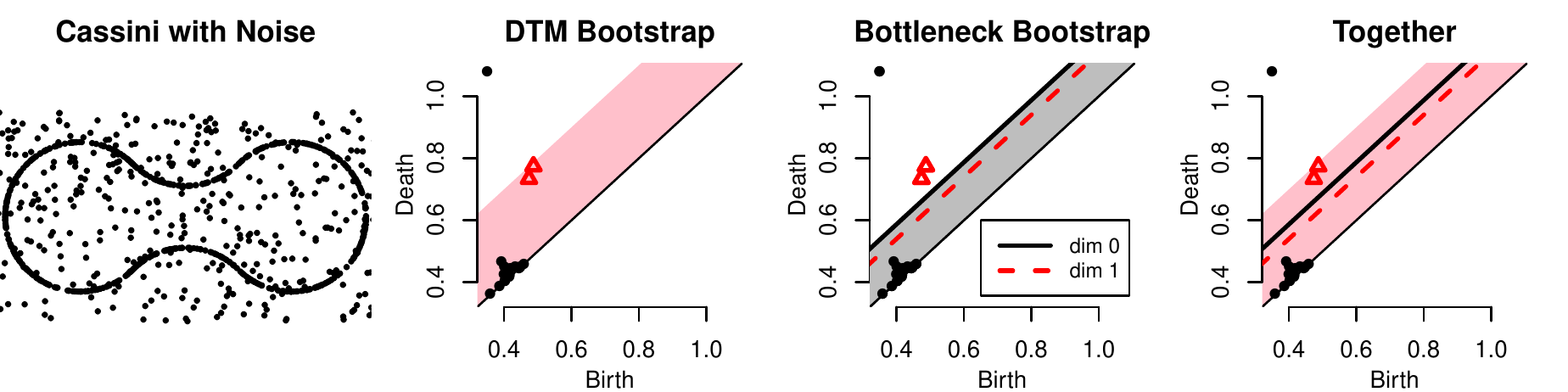}
\end{center}
\caption{The left plot shows a sample from the Cassini curve together
  with a few outliers.  The second plot shows the DTM persistence
  diagram with a 95\% confidence band constructed using the method of
  Section \ref{sec:hadamard}. The third plot shows the same
  persistence diagram with two 95\% confidence bands constructed using
  the bottleneck bootstrap with zero-dimensional features and
  one-dimensional features. The fourth plot shows the three confidence
  bands at the same time. In Section \ref{sec:examples}, we use
  this compact form to show multiple confidence bands.}
\label{fig:DTMcassiniBoot}
\end{figure}

Although the bottleneck bootstrap can be used with either the DTM or the KDE,
we shall only prove its validity for the KDE.
First, we need the following result.
For any function~$p$,
let $g = \nabla p$ denote its gradient and let
$H = \nabla^2 p$ denotes its Hessian.
We say that $x$ is a critical point if $g(x) = (0,\ldots, 0)^T$.
We then call $p(x)$ a critical value.
A function is Morse if the Hessian is non-degenerate at each critical point.
The More index of a critical point $x$ is the number of negative eigenvalues 
of~$H(x)$.


\begin{lemma}[Stability of Critical Points]
\label{lemma::stability}
Let $p$ be a density with compact support $S$.  
Assume that $S$ is a $d$-dimensional
compact submanifold of $\R^d$ with boundary.  
Assume $p$ is a Morse
function with finitely many, distinct, critical values with
corresponding critical points $c_1,\ldots, c_k$.  Also assume that $p$
is of class $\mathcal{C}^2$ on the interior of $S$, continuous and
differentiable with non vanishing gradient on the boundary of $S$.
There exists $\epsilon_0 >0$ such that for all $0< \epsilon <
\epsilon_0$ the following is true: for some positive constant $c$, there exists
$\eta \geq c \epsilon$
 such that, for any density $q$ with
support $S$ satisfying
$$
\sup_x |p(x) - q(x) | < \eta,\ \ \ \ \
\sup_x |\nabla p(x) - \nabla q(x) | < \eta,\ \ \ \ \
\sup_x |\nabla^2 p(x) - \nabla^2 q(x) | < \eta,
$$
$q$ is a Morse function with exactly $k$ critical points 
$c_1',\ldots, c_k'$ say,
and, after  a suitable re-labeling of indices,
$$
\max_j ||c_j - c_j'||\leq  \epsilon.
$$
Moreover,
$c_j$ and $c_j'$ have the same Morse index.
\end{lemma}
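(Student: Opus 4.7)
The plan is to prove the lemma by a quantitative version of the classical Morse-stability argument: near each $c_j$, the non-degeneracy of $H(c_j)$ together with an inverse function theorem argument produces a unique nearby critical point of $q$; globally, a uniform lower bound on $\|\nabla p\|$ away from the critical set rules out any spurious critical points of $q$; and the Morse index is transported by continuity of eigenvalues. The scaling $\eta \asymp \epsilon$ is natural because $\|\nabla p(x)\| \asymp \|x-c_j\|$ near a non-degenerate critical point, so only a perturbation of size $\epsilon$ can dislodge the zero by $\epsilon$.

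For the setup, let $\lambda_0 = \min_j \min_i |\lambda_i(H(c_j))|$, which is strictly positive by the Morse assumption. By continuity of $\nabla^2 p$ on the interior of $S$, I would shrink $\epsilon_0$ so that the balls $B(c_j,\epsilon_0)$ are pairwise disjoint, contained in the interior, and satisfy $\|\nabla^2 p(x)-H(c_j)\|_{\mathrm{op}} \leq \lambda_0/4$ throughout $B(c_j,\epsilon_0)$. By the fundamental theorem of calculus applied to $\nabla p$, this forces the quantitative lower bound $\|\nabla p(x)\| \geq (\lambda_0/2)\|x-c_j\|$ on each such ball. On the complement $K_\epsilon := S \setminus \bigcup_j B(c_j, \epsilon)$, continuity of $\nabla p$, the non-vanishing of $\nabla p$ on $\partial S$ by hypothesis, and the linear lower bound above combine to give a constant $c_1>0$ with $\inf_{K_\epsilon}\|\nabla p\| \geq c_1 \epsilon$ for all $\epsilon \leq \epsilon_0$.

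Choose $c$ smaller than $\min(c_1, \lambda_0/2)$ and set $\eta = c\epsilon$. I would then verify three claims for any $q$ satisfying the three $C^2$-closeness bounds with tolerance $\eta$. First, on $K_\epsilon$ the triangle inequality gives $\|\nabla q\| \geq c_1 \epsilon - \eta > 0$, so $q$ has no critical points outside $\bigcup_j B(c_j, \epsilon)$. Second, on $\partial B(c_j,\epsilon)$ one has $\|\nabla q\| \geq (\lambda_0/2)\epsilon - \eta > 0$, while on all of $B(c_j, \epsilon)$ the Hessian $\nabla^2 q$ is invertible with eigenvalues bounded below in absolute value by $3\lambda_0/4 - \eta \geq \lambda_0/2$, so $\nabla q$ is a local diffeomorphism there. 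Third, the straight-line homotopy $(1-s)\nabla p + s \nabla q$ is non-vanishing on $\partial B(c_j, \epsilon)$ for the same reason, so by homotopy invariance of the Brouwer degree, $\deg(\nabla q, B(c_j,\epsilon), 0) = \deg(\nabla p, B(c_j,\epsilon), 0) = \mathrm{sign}\,\det H(c_j) = \pm 1$; local invertibility then forces a unique zero $c_j' \in B(c_j, \epsilon)$. Combined, these give exactly $k$ critical points of $q$, each within $\epsilon$ of the corresponding $c_j$.

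Morse index preservation follows from the operator-norm estimate $\|\nabla^2 q(c_j') - H(c_j)\| \leq \|\nabla^2 q(c_j') - \nabla^2 p(c_j')\| + \|\nabla^2 p(c_j') - H(c_j)\| = O(\eta + \epsilon) = O(\epsilon)$, combined with Weyl's inequality: since the eigenvalues of $H(c_j)$ have absolute value at least $\lambda_0$, the signs of the eigenvalues of $\nabla^2 q(c_j')$ are unchanged once $\epsilon$ is small, so $c_j'$ is non-degenerate with the same Morse index as $c_j$, and $q$ is Morse. The main obstacle, more technical than conceptual, is the simultaneous calibration of a single $\eta$ of order $\epsilon$ that controls both the degree argument inside each small ball and the escape argument on $K_\epsilon$; this linear scaling of $\|\nabla p\|$ near a non-degenerate critical point is precisely what permits $\eta$ to be as large as $c\epsilon$ and no larger, up to constants.
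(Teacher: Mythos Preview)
Your argument is correct and self-contained. The quantitative core---the linear lower bound $\|\nabla p(x)\|\gtrsim \lambda_0\|x-c_j\|$ near each $c_j$, which forces $\eta\asymp\epsilon$---is exactly the same as in the paper. Where you diverge is in how you handle (i) the Morse property and index preservation of $q$ and (ii) the uniqueness of the critical point in each ball. The paper outsources (i) to classical structural-stability theorems (Banyaga--Hurtubise, Golubitsky--Guillemin), obtaining diffeomorphisms $h,\phi$ with $q=h\circ p\circ\phi$, and handles (ii) via a continuity-of-critical-points result (Demazure) along the homotopy $q_t=p+t(q-p)$. You instead do both by hand: Brouwer degree plus the straight-line homotopy gives existence and the correct total degree in each ball; the uniform bound $\|\nabla^2 q(x)-H(c_j)\|<\lambda_0/2$ forces constant Hessian signature on the ball, so every zero contributes the same local degree and there can be only one; and Weyl's inequality delivers the index. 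Your route avoids three external references at the cost of a short degree computation, and it makes the constant $c$ more explicit (essentially $\lambda_0/2$). One small point worth tightening: ``local invertibility then forces a unique zero'' is not literally true for a local diffeomorphism on a ball; the actual reason is the constant-signature observation you already have, which makes all local degrees equal and hence forces a single preimage of $0$.
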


\begin{proof}
This lemma is a consequence of classical stability properties of Morse
functions.  First, from Theorem 5.31, p.140 in \cite{banyaga2004} and
Proposition II.2.2, p.79 in \cite{golubitsky1986}, there exists
$\epsilon_1 >0$ such that if $q$ is at distance less than $\epsilon_1$
in the $\mathcal{C}^2$ topology (i.e. such that the sup-norm of $p-q$
and its first and second derivatives are bounded by $\epsilon_1$) then
$q$ is a Morse function.  Moreover, there exist two diffeomorphisms
$h: \R \to \R$ and $\phi : S \to S$ such that $q = h \circ p \circ
\phi$. As the notion of critical point and of index are invariant by
diffeomorphism, $p$ and $q$ have the same number of critical points
with same index. More precisely, the critical points of $q$ are the
points $c_i' = \phi^{-1}(c_i)$.

Now let $\epsilon >0$ be small enough such that 
$2 \epsilon < \min_{i   \not = j} \|c_i - c_j\|$, and for any $i \not = j$,
$p(B(c_i,\epsilon)) \cap p(B(c_j,\epsilon)) = \emptyset$. Then 
$\eta_1 = \eta_1(\epsilon) = \min_{i \not = j } d( p(B(c_i,\epsilon)), p(B(c_j,\epsilon)))$ where
$d(A,B) = \min_{a \in A, b \in B} |a-b|$ and 
$\eta_2 = \eta_2(\epsilon) = \inf \{ \|\nabla p(x) \| : x \in S \setminus \cup_{i=1}^k B(c_i, \epsilon) \}$
are both positive. If $q$ satisfies the assumptions of the lemma for any 
$0 < \eta \leq \min( \eta_1, \eta_2 )$, then the critical values of $q$
have to be in $\cup_i p(B(c_i,\epsilon))$ and the critical points
$c_i'$ have to be in $\cup_i B(c_i,\epsilon)$.

More precisely, notice that since $p$ is a Morse function, for $\epsilon$ small enough, $\eta_2 = O(\epsilon)$, and, 
for any $i \in \{1, \cdots, k \}$, the Taylor series of $\nabla p$ about $c_j$ yields
\[
\nabla p(x) = H_i (x - c_i) + \| x - c_i \| r\left(  x - c_i  \right),
\]
where $r(z) \rightarrow 0$ as $\|z\| \rightarrow 0$ and $H_i$ is the Hessian of
$ p$ at $c_i$. 
Let $\lambda_{\min}$ be the smallest absolute eigenvalue of the Hessians at all the critical points. Since $p$ is a Morse function, the matrix $H_i$ is full rank and $\lambda_{\min}$ is positive.
As a consequence, 
for all $x \in S \setminus \cup_{i=1}^{k} B(c_i, \epsilon)$ and $\epsilon$ small enough,  $\| \nabla p(x) \| \geq \frac{\lambda_{\min}}{2} \epsilon$.
Since $\eta_1$ is a non-incresing function of $\epsilon$, we have that, for $\epsilon$ small enough, $\eta = \eta_2 \geq \frac{\lambda_{\min}}{2} \epsilon$.


To conclude the proof of the lemma, we need to prove that each
ball $B(c_i,\epsilon)$ contains exactly one critical point of $q$.
Indeed, for $t \in [0,1]$, the functions $q_t(x) = p(x) +
t(q(x)-p(x))$ are Morse functions satisfying the same properties as
$q$. Now, since each $c_i$ is a non-degenerate point of $p$, it
follows from the continuity of the critical points (see,
e.g. Prop. 4.6.1 in \cite{demazure2013}) that, restricting $\epsilon$
if necessary, there exist smooth functions 
$c_i : [0,1] \to S$, $c_i(0) = c_i, c_i(1)=c_i'$ such that $c_i(t)$ is the unique critical
point of $q_t$ in $B(c_i,\epsilon)$. Moreover, since all the $q_t$ are
Morse functions and since the Hessian of $q_t$ at $c_i(t)$ is a
continuous function of $t$, then for any $t \in [0,1]$, $c_i(t)$ is a
non-degenerate critical point of $q_t$ with same index as $c_i$.
\end{proof}

Consider now two smooth functions such that the critical points are close, as 
illustrated in~\figref{fig:criticalpts}.
Next we show that, in this circumstance,
the bottleneck distance takes a simple form.  

\begin{figure}
 \includegraphics[height=1.5in]{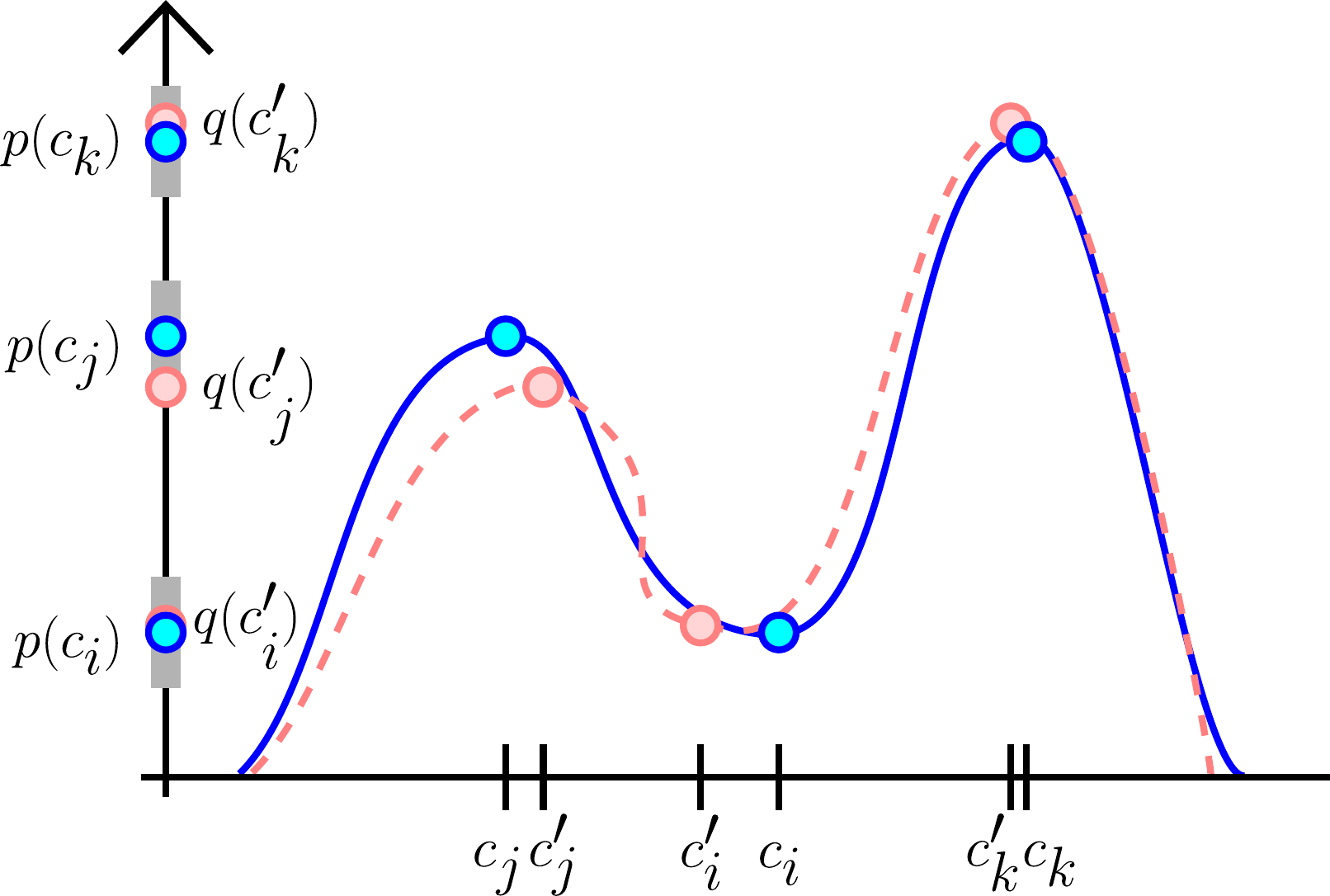}
 \caption{This figure illustrates the 
assumptions of Lemma \ref{lemma::bottle}.  The functions 
$p$ and $q$ are shown in solid blue and dashed pink, 
respectively.  The grey regions on the $y$-axis 
represent the sets $p(c) \pm b$ for critical points $c$ 
of $p$.}\label{fig:criticalpts}
\end{figure}

\begin{lemma}[Critical Distances]
\label{lemma::bottle}
Let $p$ and $q$ be two Morse functions as in Lemma \ref{lemma::stability},  with finitely many critical points
$C = \{c_1,\ldots, c_k\}$ and $C' = \{c_1',\ldots, c_k'\}$ respectively.
Let $D_p$ and $D_q$ be the persistence diagrams
from the upper level set filtrations of $p$ and $q$ respectively and let $a = \min_{i\neq j} |p(c_i)-p(c_j)|$
and $b = \max_j |p(c_j)-q(c_j')|$.
If $b \leq a / 2 - ||p - q||_{\infty}$ and $a/2 > 2 ||p - q||_{\infty}$, then $W_\infty(D_p,D_q) = b$.
\end{lemma}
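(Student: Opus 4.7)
The plan is to show the two inequalities $W_\infty(D_p,D_q) \le b$ and $W_\infty(D_p,D_q) \ge b$ separately. The upper bound will come from an explicit ``natural'' bijection induced by the critical‑point correspondence guaranteed by Lemma~\ref{lemma::stability}, and the lower bound will follow from the fact that the spectral gap $a$ between distinct critical values of $p$ is too wide to let any other bijection perform strictly better than $b$.

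First, applying Lemma~\ref{lemma::stability} (after choosing $\epsilon$ sufficiently small) one obtains a bijection $c_j \leftrightarrow c_j'$ between the critical points of $p$ and $q$ that preserves Morse indices. Since $|p(c_j) - q(c_j')| \le b$ for every $j$, and since the hypothesis $b \le a/2 - \|p-q\|_\infty$ together with $a/2 > 2\|p-q\|_\infty$ forces $a > 2b$, the strict ordering of critical values is preserved: whenever $p(c_i) < p(c_j)$,
\[
q(c_j') - q(c_i') \;\ge\; \bigl(p(c_j) - p(c_i)\bigr) - 2b \;\ge\; a - 2b \;>\; 0.
\]
Because for a Morse function on a compact manifold with boundary the persistence pairing of the upper level set filtration is determined combinatorially by the ordering of critical values and their Morse indices, the persistence pairs of $D_q$ are obtained from those of $D_p$ by substituting $c_j$ with $c_j'$. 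This yields a bijection $\phi: D_p \to D_q$ with $\phi\bigl(p(c_{j_1}),p(c_{j_2})\bigr) = \bigl(q(c_{j_1}'),q(c_{j_2}')\bigr)$, and its $\ell_\infty$-cost is at most $\max_j |p(c_j) - q(c_j')| = b$, giving $W_\infty(D_p,D_q) \le b$.

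For the lower bound, let $j^\ast$ realize the maximum in $b$, and let $(p(c_{j_1}), p(c_{j_2}))$ be the persistence pair of $D_p$ containing $c_{j^\ast}$ (essential classes are handled by noting they must be matched with essential classes of $q$ of the same index). Consider any bijection $g: D_p \to D_q$. If $g$ sends this pair to another off-diagonal pair $(q(c_{k_1}'), q(c_{k_2}'))$ with $\{k_1,k_2\} \neq \{j_1,j_2\}$, then the $\ell_\infty$-cost at the $j^\ast$ coordinate is at least $|p(c_{j^\ast}) - p(c_{k})| - |p(c_k) - q(c_k')| \ge a - b$, which exceeds $b$ since $a > 2b$. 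If instead $g$ matches this pair to the diagonal, the cost is $\tfrac12 |p(c_{j_1}) - p(c_{j_2})| \ge a/2 \ge b + \|p-q\|_\infty \ge b$. Thus $W_\infty(D_p,D_q) \ge b$, completing the proof.

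The main obstacle is the claim that the persistence pairings of $p$ and $q$ really are conjugate under the map $c_j \mapsto c_j'$. Preservation of the ordering of critical values and of Morse indices is necessary but not obviously sufficient on a manifold with boundary; the cleanest way to handle this is to apply the straight‑line interpolation $q_t := p + t(q-p)$ of Lemma~\ref{lemma::stability}, noting that for all $t \in [0,1]$ the function $q_t$ is Morse with critical points $c_j(t)$ moving continuously from $c_j$ to $c_j'$, and the Jacobi/pairing structure cannot jump along this path because no two critical values of $q_t$ ever collide (by the same gap estimate $a - 2b > 0$ applied to $q_t$).
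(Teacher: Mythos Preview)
Your argument is correct; the interpolation $q_t = p + t(q-p)$ that you invoke at the end is indeed the right way to justify that the persistence pairs of $q$ are obtained from those of $p$ via the substitution $c_j \mapsto c_j'$ (and you are right to flag that preservation of order and Morse index alone does not suffice). The paper, however, reaches the conclusion by a shorter and rather different route: instead of building the natural matching and verifying that its cost is $b$, it first invokes the persistence stability theorem to obtain $W_\infty(D_p,D_q) \le \|p-q\|_\infty$, and then argues that the separation hypotheses force the optimal matching to be the natural one. Concretely, the off-diagonal points of $D_p$ are pairwise at $\ell_\infty$-distance at least $a$ and at distance at least $a/2$ from the diagonal, while every coordinate of an off-diagonal point of $D_q$ is some $q(c_j')$ lying within $b$ of $p(c_j)$; since $a > 4\|p-q\|_\infty$ and $b+\|p-q\|_\infty \le a/2$, the only point of $D_q\cup\{\text{diagonal}\}$ within $\|p-q\|_\infty$ of $(p(c_i),p(c_j))$ is $(q(c_i'),q(c_j'))$, which must therefore lie in $D_q$, and the cost of this forced matching is exactly $b$. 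Thus the paper \emph{deduces} the pairing correspondence from optimality of the matching rather than establishing it directly via a homotopy. Your approach is more self-contained in that it does not use stability as a black box, but it pays for this with the interpolation step; the paper's is shorter but leans on the stability theorem.
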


\begin{proof}
The topology of the upper level sets of the Morse
functions $p$ and $q$ only changes at critical values (Theorem 3.1 in
\cite{Milnor1963}). As a consequence the non diagonal points of $D_p$
(resp. $D_q$) have their coordinates among the set $\{ p(c_1), \ldots,
p(c_k) \}$ (resp. $\{ q(c_1'), \ldots, p(c_k') \}$) and each $p(c_i)$
is the coordinate of exactly one point in $D_p$. Moreover, the
pairwise distances between the points of $D_p$ are lower bounded by
$a$ and all non diagonal points of $D_p$ are at distance at least $a$
from the diagonal.  From the persistence stability theorem
\cite{steiner05stable,chazal2012structure}, $W_\infty(D_p,D_q) \leq
||p - q||_{\infty}$. Since $a > 4 ||p - q||_{\infty}$ and $a \geq 2b +
2 ||p - q||_{\infty}$, the (unique) optimal matching realizing the
bottleneck distance $W_\infty(D_p,D_q)$ is such that if
$(p(c_i),p(c_j)) \in D_p$ then it is matched to the point
$(q(c_i'),q(c_j'))$ which thus have to be in $D_q$.  It follows that
$W_\infty(D_p,D_q) = b$.
\end{proof}

Now we establish the limiting distribution of
$\sqrt{n}W_\infty(\hat D,D)$.

\begin{theorem}[Limiting Distribution]
\label{thm::limit}
Let $p_h(x) = \mathbb{E}[\hat p_h(x)]$, where $\hat p_h(x)$ is the Kernel Density Estimator evaluated in $x$.
Assume that $p_h$ is a Morse function with
two uniformly bounded continuous derivatives
and finitely many critical points $c=\{c_1,\ldots, c_k \}$.
Let $D$ be the persistence diagram of the upper level sets of $p_h$
and let
$\hat D$ be the diagram of upper level sets of $\hat p_h$.
Then
$$
\sqrt{n} W_\infty(\hat D,D) 
\rightsquigarrow || Z||_\infty
$$
where 
$Z=(Z_1,\ldots, Z_k)\sim N(0,\Sigma)$
and
$$
\Sigma_{jk} =
\int K_h(c_j,u) K_h(c_k,u) dP(u) - 
\int K_h(c_j,u) dP(u)\, \int K_h(c_k,u) dP(u).
$$
\end{theorem}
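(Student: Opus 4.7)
The plan is to combine the two auxiliary lemmas with a Taylor expansion at the critical points plus a multivariate central limit theorem for the empirical process of the fixed-bandwidth KDE.

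First I would establish that, with probability tending to one, the bottleneck distance reduces to a maximum over critical values. Because $h$ is fixed, the class of functions $\{K_h(x,\cdot)\,:\,x\in\R^d\}$ (and its first two derivatives in $x$) is a bounded Donsker class, so $\|\hat p_h-p_h\|_\infty$, $\|\nabla\hat p_h-\nabla p_h\|_\infty$ and $\|\nabla^2\hat p_h-\nabla^2 p_h\|_\infty$ are all $O_P(1/\sqrt{n})$. This lets me apply the Stability of Critical Points lemma (with $q=\hat p_h$ and any $\epsilon=\epsilon_n\to 0$ slowly enough that $n^{-1/2}=o(\epsilon_n)$), producing, for $n$ large, exactly $k$ critical points $\hat c_1,\dots,\hat c_k$ of $\hat p_h$ with $\max_j\|\hat c_j-c_j\|\to 0$. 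Since the critical values $\{p_h(c_i)\}$ are distinct and finite in number, the separation constant $a=\min_{i\neq j}|p_h(c_i)-p_h(c_j)|$ is strictly positive, while both $\|p_h-\hat p_h\|_\infty$ and $b=\max_j|p_h(c_j)-\hat p_h(\hat c_j)|$ tend to zero. Hence the hypotheses of the Critical Distances lemma hold eventually, giving
\begin{equation*}
W_\infty(\hat D,D)=\max_{1\leq j\leq k}\bigl|p_h(c_j)-\hat p_h(\hat c_j)\bigr|
\end{equation*}
with probability tending to one.

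Second I would replace $\hat p_h(\hat c_j)$ by $\hat p_h(c_j)$ up to a negligible remainder. Since $\hat c_j$ is a critical point of $\hat p_h$, a second order Taylor expansion of $\hat p_h$ about $\hat c_j$ (using that $\|\nabla^2\hat p_h\|_\infty$ is bounded in probability) gives
\begin{equation*}
\hat p_h(c_j)=\hat p_h(\hat c_j)+O_P\bigl(\|c_j-\hat c_j\|^2\bigr).
\end{equation*}
A first order Taylor expansion of $\nabla\hat p_h$ about $c_j$ applied at $\hat c_j$, combined with nondegeneracy of the Hessian $H(c_j)$ of $p_h$ at the Morse critical point $c_j$, yields $\hat c_j-c_j=-H(c_j)^{-1}\nabla\hat p_h(c_j)+o_P(\|\nabla\hat p_h(c_j)\|)$, so that $\|\hat c_j-c_j\|=O_P(n^{-1/2})$ and therefore $\|\hat c_j-c_j\|^2=O_P(n^{-1})=o_P(n^{-1/2})$. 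Consequently
\begin{equation*}
\sqrt{n}\bigl(\hat p_h(\hat c_j)-p_h(c_j)\bigr)=\sqrt{n}\bigl(\hat p_h(c_j)-p_h(c_j)\bigr)+o_P(1)
\end{equation*}
uniformly in $j=1,\dots,k$.

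Third, the finite-dimensional vector $\bigl(\sqrt{n}(\hat p_h(c_j)-p_h(c_j))\bigr)_{j=1}^{k}$ is, up to the normalizing constant $(\sqrt{2\pi}h)^{-d}$, a sum of i.i.d.\ bounded random vectors with coordinates $K_h(c_j,X_i)-\E K_h(c_j,X_i)$, so the ordinary multivariate CLT gives convergence to $Z=(Z_1,\dots,Z_k)\sim N(0,\Sigma)$ with the stated covariance. Applying the continuous mapping theorem to $v\mapsto\|v\|_\infty$ then yields $\sqrt{n}W_\infty(\hat D,D)\rightsquigarrow\|Z\|_\infty$.

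The main obstacle is the quantitative control in the second step: one has to be careful that the Stability of Critical Points lemma is applied with an $\epsilon_n$ that both guarantees (with high probability) the hypotheses on $\|\hat p_h-p_h\|_{C^2}$ and is compatible with the sharper $n^{-1/2}$ rate on $\|\hat c_j-c_j\|$ needed to make the Taylor remainder $o_P(n^{-1/2})$. The lemma itself only gives $\|\hat c_j-c_j\|\leq\epsilon$, so the required $n^{-1/2}$ rate has to be obtained separately by inverting the Hessian as above, which crucially uses the Morse assumption to ensure $H(c_j)$ is nonsingular.
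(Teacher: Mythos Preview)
Your proposal is correct and follows essentially the same strategy as the paper: use the stability-of-critical-points lemma together with the critical-distances lemma to reduce $W_\infty(\hat D,D)$ to $\max_j|\hat p_h(\hat c_j)-p_h(c_j)|$, then Taylor-expand to replace $\hat c_j$ by $c_j$, and finish with a multivariate CLT. The only minor difference is that you expand $\hat p_h$ about $\hat c_j$ (exploiting $\nabla\hat p_h(\hat c_j)=0$) and obtain the $O_P(n^{-1/2})$ rate on $\hat c_j-c_j$ by Hessian inversion, whereas the paper expands about $c_j$ (exploiting $\nabla p_h(c_j)=0$), uses the $O(\sqrt{\log n/n})$ rate coming directly from the stability lemma, and invokes Berry--Esseen rather than CLT plus continuous mapping; these are interchangeable technical choices within the same argument.
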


\begin{proof}
Let 
$\hat c =\{\hat c_1,  \hat c_2, \dots \}$
be the set of critical points of
$\hat p_h$.
Let $g$ and $H$ be the gradient and Hessian of $p_h$.
Let $\hat g$ and $\hat H$ be the gradient and Hessian of $\hat p_h$.
By a standard concentration of measure argument
(and recalling that the support is compact), for any $\eta >0 $ 
there is an event $A_{n,\eta}$ such that,
on $A_{n,\eta}$,
\begin{equation}\label{eq::deriv-bound}
\sup_x ||\hat p^{(i)}_h(x) - p_h^{(i)}(x)|| < \eta
\end{equation}
for $i=0,1,2$,
and
$\mathbb{P}(A_{n,\eta}^c)\leq e^{-n c \eta^2}$. 
This is proved for $i=0$ in \cite{rao1983nonparametric}, \cite{gine2002rates}, \cite{yukich1985laws}, and the same proof gives the results for $i=1,2$.
It follows that
$\sup_x ||g(x) - \hat g(x)|| = O_P (1/\sqrt{n} )$ and
$\sup_x || H(x) - \hat H(x)||_{\rm max} = O_P(1/\sqrt{n} )$.

For $\eta $ smaller than a fixed value $\eta_0 $, we can apply Lemma \ref{lemma::stability}, we get that on
$A_{n,\eta}$, $\hat c$ and $c$ have the same number of elements and
can be indexed so that
\begin{eqnarray*}
\max_{j=1,\dots,k} \Vert \hat c_j - c_j \Vert &\leq&  \frac \eta C 
\end{eqnarray*}
where $C$ is the same constant is in Lemma \ref{lemma::stability}. We then take $\eta_n :=\sqrt{ \frac {\log n} {
n} }$ and we consider the events $A_n := A_{n,\eta_n}$.  Then, for $n$ large enough, on $A_n$ we get
$$
\max_{j=1,\dots,k} \Vert \hat c_j - c_j \Vert  = O\left(\sqrt{ \frac{\log n}{n}}\right)
$$
whereas $P \left(A_n^c \right) = o(1)$. In the following, we thus can restrict to 
$A_{n}$.

The critical values of $p_h$ are
$v = ( v_1 \equiv p_h(c_1),\ldots, v_k \equiv p_h(c_k))$ and
the critical values of $\hat p_h$ are
$\hat v = (\hat v_1\equiv \hat p_h(\hat c_1),\ldots, \hat v_k \equiv \hat p_h(\hat c_k))$.
Now we use Lemma \ref{lemma::bottle} to conclude that
$W_\infty(\hat D,D) = \max_j \Vert \hat v_j - v_j \Vert_\infty$ for $n$ large enough.
Hence,
$$
W_\infty(\hat D,D) = \max_{j=1,\dots,k} | \hat p_h(\hat c_j) - p_h(c_j)|.
$$
Then, using a Taylor expansion, for each $j$,
$$
\hat p_h(\hat c_j) = \hat p_h(c_j) + (\hat c_j - c_j)^T \hat g(c_j) + O(||\hat c_j - c_j||^2).
$$
Since $g(c_j) = (0,\ldots, 0)$ we can write the last equation as
$$
\hat p_h(\hat c_j) = \hat p_h(c_j) + (\hat c_j - c_j)^T (\hat g(c_j)-g(c_j)) + O(||\hat c_j - c_j||^2).
$$
So,
\begin{align*}
\sqrt{n}(\hat v_j - v_j) &= \sqrt{n}(\hat p_h(\hat c_j) - p_h(c_j))\\
&=
\sqrt{n}(\hat p_h(c_j) - p_h(c_j)) + 
\sqrt{n}(\hat c_j - c_j)^T (\hat g(c_j)-g(c_j)) +  O( ||\hat c_j - c_j||^2)\\
&= \sqrt{n}(\hat p_h(c_j) - p_h(c_j)) + 
\sqrt{n}(\hat c_j - c_j)^T (\hat g(c_j)-g(c_j)) + o(1/\sqrt{n}).
\end{align*}
For the second term, note that
$\sqrt{n}(\hat c_j - c_j)= O (\log n)$ and
$(\hat g(c_j)-g(c_j))= O_P(1/\sqrt{n})$.
So
$$
\sqrt{n}(\hat v_j - v_j) = \sqrt{n}(\hat p_h( c_j) - p_h(c_j)) + o_P(1).
$$
Therefore,
$$
\sqrt{n}W_\infty(\hat D,D) = 
\sqrt{n} \max_j | \hat v_j  - v_j |  = \max_j | \sqrt{n}(\hat p_h(c_j) - p_h(c_j)) | + o_P(1).
$$
By the multivariate Berry-Esseen theorem 
\citep{bentkus2003dependence},
$$
\sup_A | \mathbb{P}( \sqrt{n}(\hat p_h ( c) - p_h (c)) \in A) - \mathbb{P}( Z \in A)| \leq \frac{C_1}{\sqrt{n}}
$$
where the supremum is over all convex sets $A\in \mathbb{R}^k$,
$C_1>0$ depends on $k$ and the third moment of $h^{-d}K(x-X/h)$
(which is finite since $h$ is fixed and the support is compact),
$Z=(Z_1,\ldots, Z_k)\sim N(0,\Sigma)$
and
$$
\Sigma_{jk} =
\int K_h(c_j,u) K_h(c_k,u) dP(u) - 
\int K_h(c_j,u) dP(u)\, \int K_h(c_k,u) dP(u).
$$
Hence,
$$
\sup_t \Biggl|\mathbb{P}\Bigl( \max_j |\sqrt{n}(\hat p_h(c_j) - p_h(c_j)) | \leq t\Bigr) - 
\mathbb{P}( ||Z||_\infty \leq t)\Biggr| \leq \frac{C_1}{\sqrt{n}}.
$$
By Lemma
\ref{lemma::bottle},
$W_\infty(\hat D,D) = \max_j | \hat v_j - v_j |$.
The result follows.
\end{proof}

Let
$$
F_n(t) = \mathbb{P}(\sqrt{n}W_\infty(\hat D,D) \leq t).
$$
Let $X_1^*,\ldots,X_n^* \sim P_n$
where $P_n$ is the empirical distribution.
Let $\hat D^*$ be the diagram from
$\hat p_h^*$ and let
$$
\hat F_n(t) = \mathbb{P}\Bigl(\sqrt{n}W_\infty(\hat D^*,\hat D)\,\leq t \Bigm|\,X_1,\ldots,X_n\Bigr)
$$
be the bootstrap approximation to
$F_n$.

Next we show that the bootstrap quantity $F_n(t)$ converges to the same limit as
$F_n(t)$.

\begin{corollary}
Assume the same conditions as the last theorem. 
Then,
$$
\sup_t |\hat F_n(t) - F_n(t)|\stackrel{P}{\to} 0.
$$
\end{corollary}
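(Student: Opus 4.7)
The plan is to mirror the proof of Theorem \ref{thm::limit} step by step in the bootstrap world, and then invoke Pólya's theorem to upgrade pointwise convergence in distribution to uniform convergence of the CDFs. First, I would note that since the limiting distribution in Theorem \ref{thm::limit} is the CDF of $\|Z\|_\infty$ with $Z \sim N(0,\Sigma)$, which is continuous on $\R$ (a finite maximum of jointly Gaussian coordinates has no atoms), it suffices to show that conditionally on $X_1,X_2,\ldots$,
$$\sqrt{n}\,W_\infty(\hat D^*,\hat D) \;\rightsquigarrow\; \|Z\|_\infty$$
in probability; uniform convergence $\sup_t|\hat F_n(t)-F_n(t)|\to 0$ then follows from Pólya's theorem together with Theorem \ref{thm::limit}.

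Next I would carry out the bootstrap analogue of the critical-point stability argument. On the high-probability event $A_n$ from the proof of Theorem \ref{thm::limit}, $\hat p_h$ is $\mathcal{C}^2$-close to the Morse function $p_h$, hence is itself Morse with exactly $k$ critical points $\hat c_1,\ldots,\hat c_k$ whose values are well separated. Conditionally on such a realization, the same concentration inequalities applied to $\hat p_h^* - \hat p_h$ (which is an average of bounded i.i.d.\ kernels drawn from $P_n$, with variance that mimics that of $\hat p_h - p_h$) yield an event $A_n^*$, of conditional probability $1 - o_P(1)$, on which $\sup_x\|\hat p_h^{*(i)}(x) - \hat p_h^{(i)}(x)\| = O_P(\sqrt{\log n/n})$ for $i=0,1,2$. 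Applying Lemma \ref{lemma::stability} with $p = \hat p_h$ and $q = \hat p_h^*$ produces critical points $\hat c_1^*,\ldots,\hat c_k^*$ of $\hat p_h^*$ with $\max_j\|\hat c_j^* - \hat c_j\| = O_P(\sqrt{\log n/n})$, and applying Lemma \ref{lemma::bottle} (whose separation hypotheses hold with high probability once $n$ is large, since the separation inherited from $p_h$ is preserved up to $O_P(\sqrt{\log n/n})$) yields
$$W_\infty(\hat D^*,\hat D) \;=\; \max_{j=1,\ldots,k}\bigl|\hat p_h^*(\hat c_j^*) - \hat p_h(\hat c_j)\bigr|.$$

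The same second-order Taylor expansion used in Theorem \ref{thm::limit}, centered now at $\hat c_j$ (where $\hat g(\hat c_j)=0$), gives
$$\sqrt{n}\bigl(\hat p_h^*(\hat c_j^*) - \hat p_h(\hat c_j)\bigr) \;=\; \sqrt{n}\bigl(\hat p_h^*(\hat c_j) - \hat p_h(\hat c_j)\bigr) + o_P(1),$$
because $\sqrt{n}\|\hat c_j^* - \hat c_j\| = O_P(\sqrt{\log n})$ and the bootstrap gradient at $\hat c_j$ satisfies $\hat g^*(\hat c_j) - \hat g(\hat c_j) = O_P(1/\sqrt{n})$ conditionally. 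Thus, conditionally on the data,
$$\sqrt{n}\,W_\infty(\hat D^*,\hat D) \;=\; \max_j\bigl|\sqrt{n}(\hat p_h^*(\hat c_j) - \hat p_h(\hat c_j))\bigr| + o_P(1).$$
Finally, the vector $\sqrt{n}(\hat p_h^*(\hat c_j) - \hat p_h(\hat c_j))_{j=1}^k$ is, conditionally, a normalized sum of $n$ i.i.d.\ bounded random vectors drawn from $P_n$, with empirical covariance $\hat\Sigma_{jk} = n^{-1}\sum_i K_h(\hat c_j,X_i)K_h(\hat c_k,X_i) - (n^{-1}\sum_i K_h(\hat c_j,X_i))(n^{-1}\sum_i K_h(\hat c_k,X_i))$. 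Because the kernels are bounded and continuous and $\hat c_j \to c_j$, the law of large numbers gives $\hat\Sigma \xrightarrow{P} \Sigma$, and the multivariate Berry–Esseen bound applied conditionally yields convergence of $\sqrt{n}(\hat p_h^*(\hat c_j) - \hat p_h(\hat c_j))_{j=1}^k$ to $N(0,\Sigma)$ in probability. Taking $\|\cdot\|_\infty$ gives the desired conditional convergence to $\|Z\|_\infty$, and Pólya's theorem concludes $\sup_t|\hat F_n(t)-F_n(t)| \xrightarrow{P} 0$.

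The main obstacle is verifying that the bootstrap critical points $\hat c_j^*$ are themselves well behaved: one needs the stability lemma to apply to $\hat p_h^*$ vs.\ $\hat p_h$, which requires bootstrap $\mathcal{C}^2$ concentration and the fact that the separation of critical values of $\hat p_h$ stays bounded away from zero. Both facts hold on an event of probability $1-o(1)$ because they are inherited from the corresponding properties of $p_h$ via the $\mathcal{C}^2$-closeness of $\hat p_h$ to $p_h$; this is the only place where the assumption that $p_h$ is Morse with isolated critical values is truly used in the bootstrap argument.
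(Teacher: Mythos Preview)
Your proposal is correct and follows essentially the same route as the paper: replace $p_h$ by $\hat p_h$ and $\hat p_h$ by $\hat p_h^*$, rerun the critical-point stability (Lemma~\ref{lemma::stability}), the bottleneck identity (Lemma~\ref{lemma::bottle}), the Taylor expansion, and the conditional multivariate Berry--Esseen bound, then use $\hat\Sigma\to\Sigma$ to identify the limit.

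The only noteworthy difference is in the final assembly. The paper works directly with CDFs via the triangle inequality
\[
\sup_t|\hat F_n(t)-F_n(t)|\;\le\;\sup_t|\hat F_n(t)-\mathbb{P}(\|\tilde Z\|_\infty\le t)|+\sup_t|\mathbb{P}(\|\tilde Z\|_\infty\le t)-\mathbb{P}(\|Z\|_\infty\le t)|+\sup_t|F_n(t)-\mathbb{P}(\|Z\|_\infty\le t)|,
\]
where $\tilde Z\sim N(0,\hat\Sigma)$, and controls each term with an explicit rate $O_P(\log n/\sqrt{n})$. You instead establish conditional weak convergence of $\sqrt{n}\,W_\infty(\hat D^*,\hat D)$ to $\|Z\|_\infty$ and then appeal to P\'olya's theorem. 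Your route is slightly cleaner for the qualitative statement and avoids bounding the distance between the two Gaussian CDFs; the paper's route yields a rate of convergence as a byproduct. Either is acceptable here.
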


\begin{proof}
The proof is essentially the same
as the proof of Theorem \ref{thm::limit}
except that $\hat p_h$ replaces $p_h$
and $\hat p_h^*$ replaces $\hat p_h$. Using the same notations as in the proof of Theorem~\ref{thm::limit}, we note
that on the set  $A_n$, for $n$ larger than a fixed value $n_0$, the function $\hat p_h$ is a Morse function with
two uniformly bounded continuous derivatives and finitely many critical points $\hat c=\{\hat
c_1,\ldots, \hat c_k \}$. We can restrict the analysis to sequence of events $A_n$ since $P(A_n)$ tends to zero.
Assuming that $ A_n$ is satisfied, using the same argument as in Theorem \ref{thm::limit}, we get that:
 $$
\sup_t \Biggl|
\mathbb{P}\Bigl( \max_j | \sqrt{n}(\hat p_h^*(\hat c_j) - \hat p_h(\hat c_j)) | \leq t
\,\Bigm|\, X_1,\ldots, X_n\Bigr) - \mathbb{P}( ||\tilde Z||_\infty \leq t)\Biggr| \leq \frac{C_2^*}{\sqrt{n}}
$$
where 
$\tilde Z\sim N(0,\hat \Sigma)$ with
$$
\hat \Sigma_{jk} =
\frac{1}{n}\sum_i  K_h(\hat c_j,X_i) K_h(\hat c_k,X_i) -
\frac{1}{n}\sum_i  K_h(\hat c_j,X_i)\, \frac{1}{n}\sum K_h(\hat c_k,X_i)
$$
and $C_2^*$ depends on the empirical third moments of   
$h^{-d}K((x-X^*)/h)$. There exists an upper bound $C_2$ on $C_2^*$, which only depends on $K$ and $P$. 
Since
$\max_{j,k}|\hat\Sigma_{j,k} - \Sigma_{j,k}| = O_P(\log n /\sqrt{n})$ and
$\max_j \Vert \hat c_j - c_j \Vert = O_P(\log n/\sqrt{n})$, we conclude that
$$
\sup_t| \mathbb{P}( ||\tilde Z||_\infty \leq t) - \mathbb{P}( ||Z||_\infty \leq t) | = 
O_P\left(\frac{\log n}{\sqrt{n}}\right).
$$
Then
\begin{align*}
\sup_t|\hat F_n(t) - F_n(t)| &\leq
\sup_t|\hat F_n(t) - \mathbb{P}( ||\tilde Z||_\infty \leq t)| \\
& +
\sup_t| \mathbb{P}( ||\tilde Z||_\infty \leq t) - \mathbb{P}( ||Z||_\infty \leq t) | +
\sup_t|F_n(t) - \mathbb{P}( || Z||_\infty \leq t)| = 
O_P\left(\frac{\log n}{\sqrt{n}}\right).
\end{align*}
The result follows.
\end{proof}

\section{Extensions}

In this section, we discuss how to deal with three issues that can arise:
choosing the parameters, correcting for boundary bias, and dealing with noisy
data.

\subsection{A Method for Choosing the Smoothing Parameter}
\label{section::choosing}

An unsolved problem in topological inference is how to
choose the smoothing parameter $m$ (or $h$).
\cite{guibas2013witnessed} suggested tracking
the evolution of the persistence of the homological features as the tuning parameter varies.
Here we make this method more formal,
by selecting the parameter that maximizes the total amount of significant persistence.

Let $\ell_1(m),\ell_2(m),\ldots, $
be the lifetimes
of the features at scale $m$.
Let $c_\alpha(m)/\sqrt{n}$
be the significance cutoff
at scale $m$.
We define two quantities that measure
the amount of significant information using parameter $m$:
$$
N(m) = \# \left\{i:\ \ell(i) > \frac{c_\alpha(m)}{\sqrt{n}}\right\},\ \ \ 
S(m) = \sum_i \left[ \ell_i - \frac{c_\alpha(m)}{\sqrt{n}}\right]_+.
$$
These measures are small when $m$ is small since $c_\alpha(m)$ is large.
On the other hand,
they are small when $m$ is large since then all the features are smoothed out.
Thus we have a kind of topological bias-variance trade-off.
We choose $m$ to maximize $N(m)$ or $S(m)$.
The same idea can be applied to the kernel distance and kernel density estimator.
See the example in Figure~\ref{fig:methodCassini}.

\begin{figure}[!ht]
\begin{center}
\includegraphics[scale=0.155]{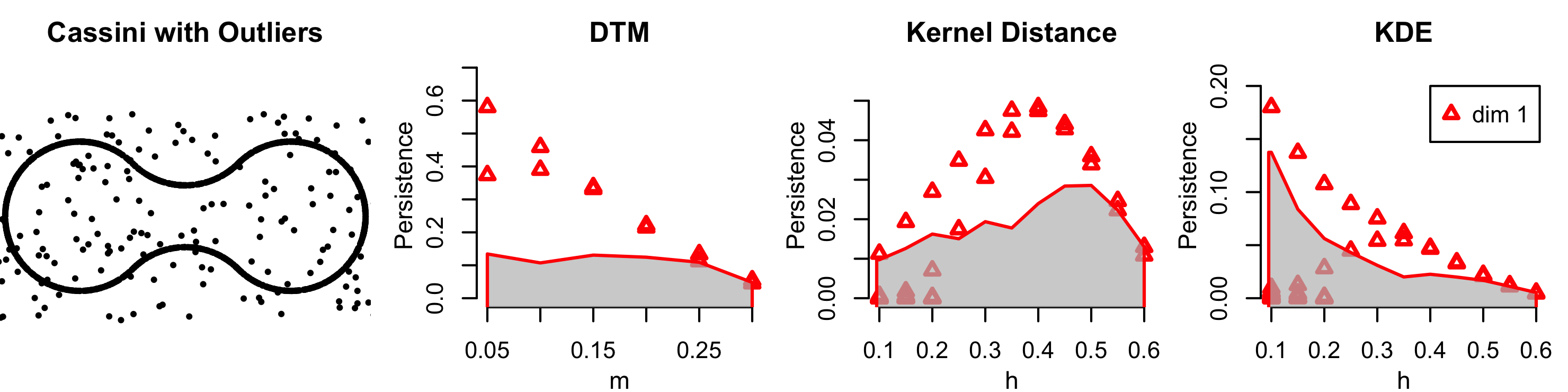}
\end{center}
\caption{Max Persistence Method with Bottleneck Bootstrap Bands for  1-dimensional features. 
DTM: $\argmax_m N(m)= \{0.05, 0.10, 0.15, 0.20\} $, 
$\argmax_m S(m)= 0.05$ ; Kernel Distance: $\argmax_h N(h)= \{0.25, 0.30, 0.35, 0.40, 0.45, 0.50\}$, 
$\argmax_h S(h)= 0.35$; KDE: $\argmax_h N(h)= \{ 0.25, 0.30, 0.35, 0.40, 0.45, 0.50\}$, $\argmax_h S(h)= 0.3$ 
The plots show how to choose the smoothing parameters to maximize the number of 
significant features.}
\label{fig:methodCassini}
\end{figure}

\subsection{Boundary Bias}\label{ss::bias}

It is well known that kernel density estimators
suffer from boundary bias.
For topological inference,
this bias manifests itself in a particular form
and the same problem affects the DTM.
Consider Figure~\ref{fig::boundary}.
Because of the bounding box,
many of the loops are incomplete.
The result is that, using either the DTM or the KDE
we will miss many of the loops.

There is a large literature on reducing boundary bias in the kernel
density estimation literature.
Perhaps the simplest approach is to reflect the data around the boundaries (see for example \cite{schuster1958incorporating}).
But there is a simpler fix for topological inference: we merely need to close the
loops at the boundary.
This can be done by adding points uniformly around the boundary.

\begin{figure}[!ht]
\begin{center}
\includegraphics[scale=0.89]{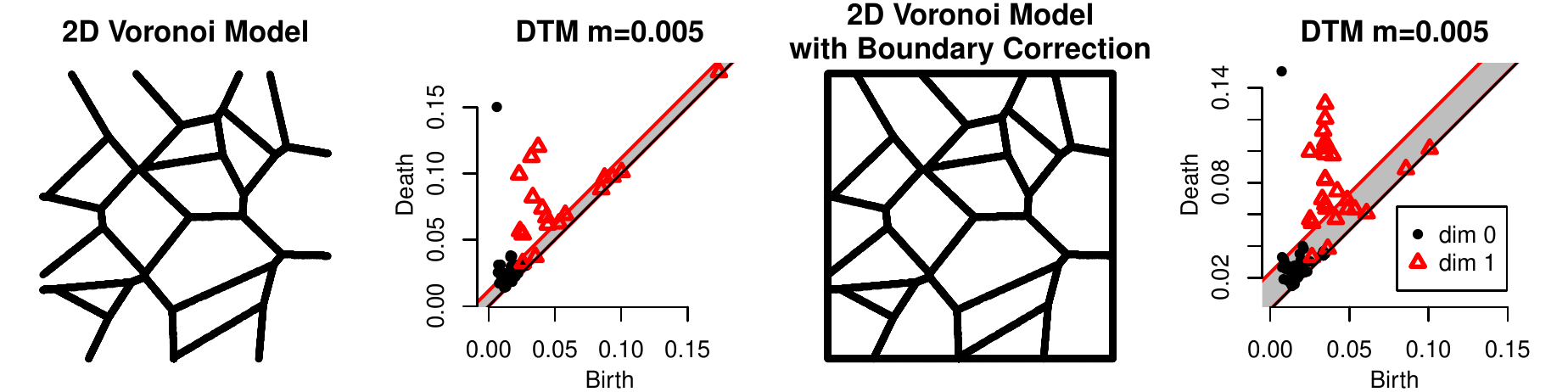}
\end{center}
\caption{First: 10,000 points sampled from a 2D Voronoi model with 20 nuclei.  
Second: the corresponding persistence diagram of sublevel sets of the distance 
to measure function. Note that only 9 loops are detected as significant. Third: 
2,000 points have been added on the boundary of the square delimiting the 
Voronoi model. Fourth: now the corresponding persistence diagram shows 16 
significant loops.}
\label{fig::boundary}
\end{figure}


\subsection{Two Methods for Improving Performance}

We can improve the performance of all the methods
if we cam mitigate the outliers and noise.
Here we suggest two methods to do this.
We focus on the kernel density estimator.

First, a simple method to reduce the number of outliers
is to truncate the density, that is,
we eliminate 
$\{X_i:\ \hat p(X_i) < t\}$
for some threshold $t$.
Then we re-estimate the density.

Secondly, we sharpen the data as
described in
Choi and Hall (1999) and
Hall and Minnotte (2002).
The idea of sharpening
is to move each data point $X_i$ slightly in the direction of the gradient
$\nabla \hat p(X_i)$ and then re-estimate the density.
The authors show that this reduces the bias at peaks in the density
which should make it easier to find topological features.
It can be seen that the sharpening method
amounts to running one or more steps if the
mean-shift algorithm.
This is a gradient ascent which is intended to find modes
of the density estimator.
Given a point $x$, we move $x$ to 

$$
\frac{\sum_i X_i K_h(x,X_i)}{\sum_i K_h(x,X_i)},
$$
which is simply the local average centered at $x$.
For data sharpening, we do one (or a few)
iterations of this to each data point $X_i$.
Then the density is re-estimated.\\
In fact, we could also use the
subspace constrained mean shift algorithm
(SCMS) which moves points towards ridges of the density;
see \cite{ozertem2011locally}.\\
Figure~\ref{fig::sharpen} shows these methods applied
to a simple example.

\begin{figure}[!ht]
\begin{center}
\includegraphics[scale=0.89]{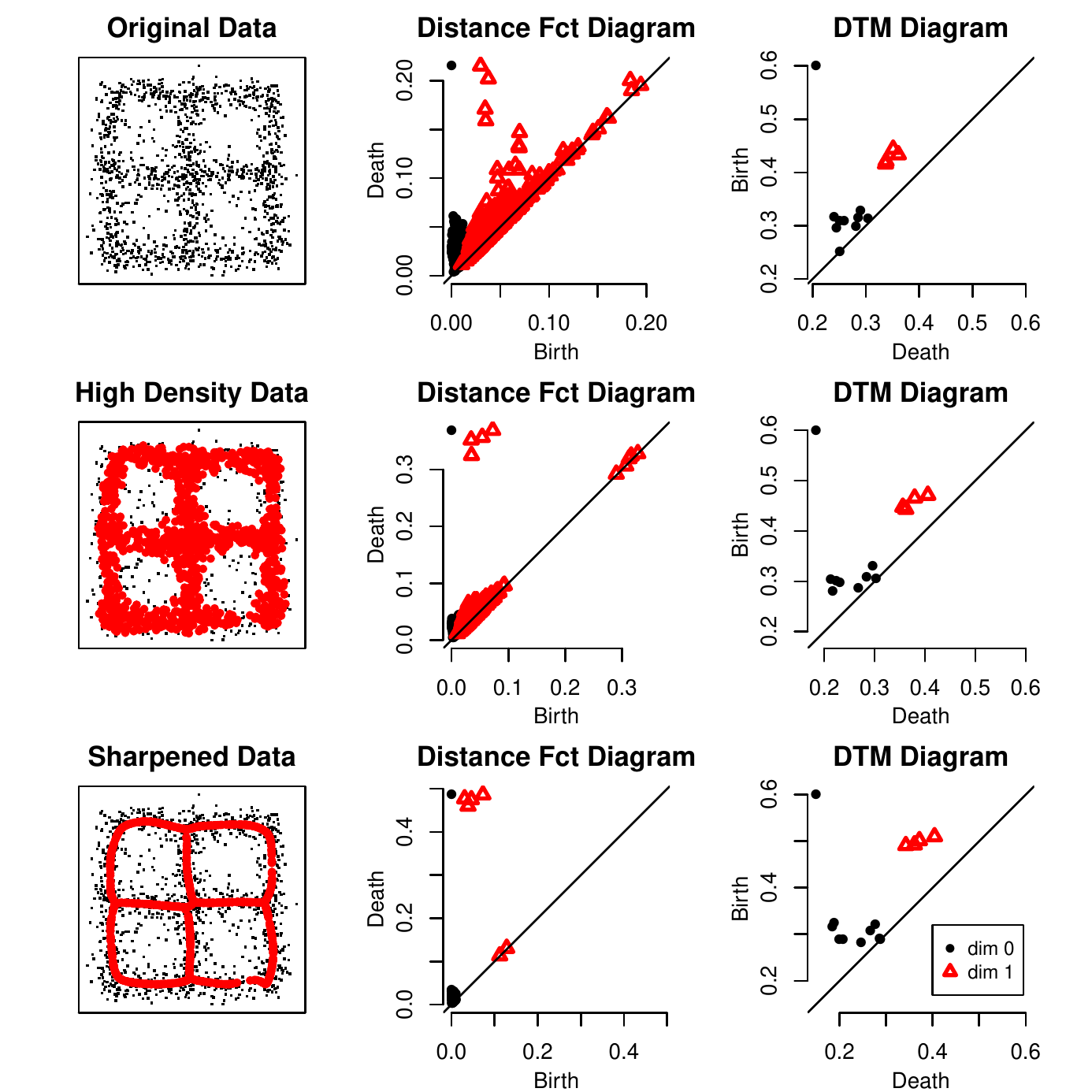}
\end{center}
\caption{Top: 1,300 points sampled along a $2 \times 2$ grid  with Gaussian
noise; the diagram of the distance function shows many loops due to noise.
Middle: the red points are the high density data (density $> 0.15$);  the
corresponding diagram of the distance function correctly captures the 4 loops,
plus a few features with short lifetime. Bottom: the red points represent the
sharpened high density data; now most of the noise in the corresponding diagram
is eliminated. Note that the diagram of the distance to measure function does a
good job with the original data. The bottom left plot shows a slight
improvement, in the sense that the persistence of the 4 loops has increased.}
\label{fig::sharpen}
\end{figure}

\newpage

\section{Examples}
\label{sec:examples}

\begin{example}[Noisy Grid]
\label{ex:noisy-grid}
The data in Figure~\ref{fig::noisy-grid}
are 10,000 data points on a 2D grid. We add Gaussian noise plus 1,000 outliers and compute the persistence diagrams of Kernel Density Estimator, Kernel distance, and Distance to Measure. The pink bands show 95\% confidence sets obtained by bootstrapping the corresponding functions. The black lines show 95\% confidence bands obtained with the bottleneck bootstrap for dimension 0, while the red lines show 95\% confidence bands obtained with the bottleneck bootstrap for dimension 1. The Distance to Measure, which is less sensitive to the density of the points, correctly captures the topology of the data. The Kernel Distance and KDE find some extra significant connected component, corresponding to high density regions at the intersection of the grid.
\end{example}

\begin{figure}[!ht]
\begin{center}
\includegraphics[scale=0.89]{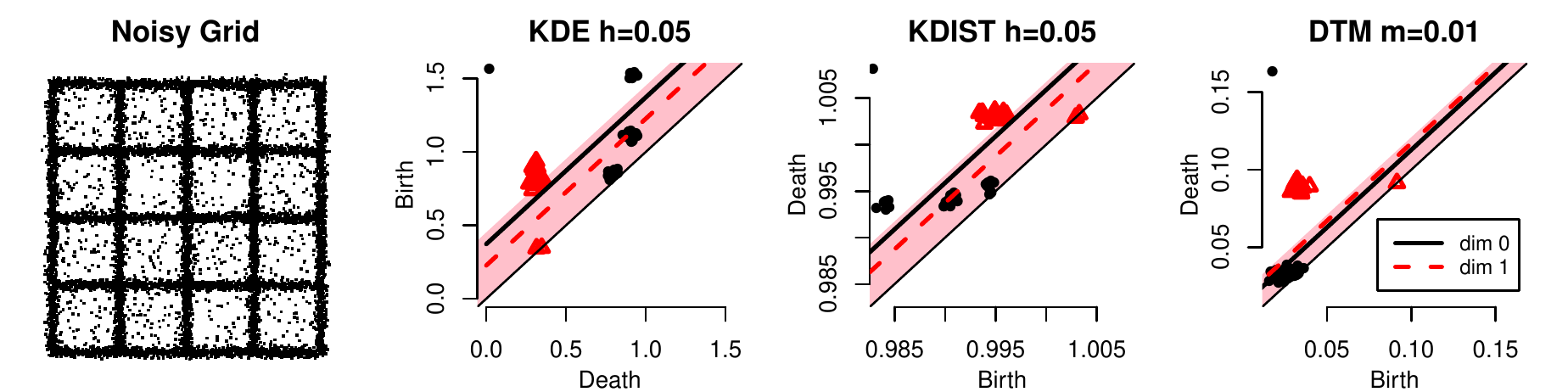}
\end{center}
\caption{10,000 data points on a 2D grid and the corresponding persistence diagrams of Kernel Density Estimator, Kernel distance, and Distance to Measure. For more details see Example \ref{ex:noisy-grid}.
}
\label{fig::noisy-grid}
\end{figure}

\begin{example}[Soccer]
Figure~\ref{fig::soccer} shows the field position of
two soccer players. The data come from body-sensor traces collected
during a professional soccer game in late 2013 at the Alfheim Stadium
in Tromso, Norway. The data are sampled at 20 Hz. See
\cite{pettersen2014soccer}.  
Although the data is a function observed over time,
we treat it as a point cloud.
Points
on the boundary of the field have been added to avoid boundary
bias.
The DTM captures the difference between
the two players: the defender leaves one big portion of the filed
uncovered (1 significant loop in the persistence diagram), while the
midfielder does not cover the 4 corners (4 significant loops). 
Nonetheless, the Kernel distance, which is more sensible to the
density of these points, fails to detect significant topological
features.
\end{example}

\begin{figure}[!ht]
\begin{center}
\includegraphics[scale=0.31]{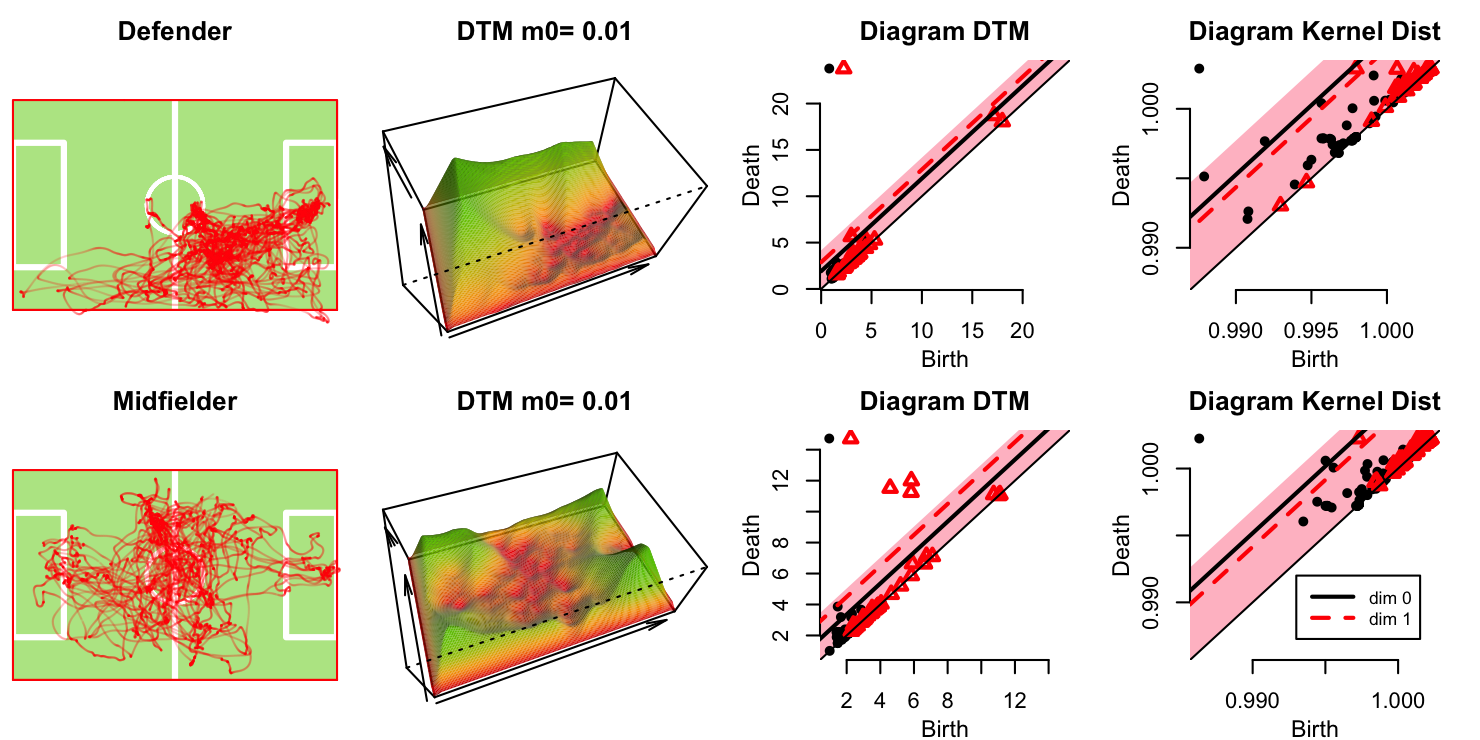}
\end{center}
\caption{Top: data for a defender. We show the DTM, the digram for the DTM and the digram
for the kernel distance.
Bottom: same but for a midfielder.
The midfielder data has more loops.}
\label{fig::soccer}
\end{figure}

\begin{example}[Voronoi Models]
\label{ex:voronoi}
Given $k$ points (nuclei) $\{z_1, \dots, z_k\} \subset \mathbb{R}^3$, let the Voronoi region $R_k$ be
$
R_k=\left\{x \in \mathbb{R}^3: \Vert x - z_k  \Vert \leq \Vert x - x_j \Vert \text{ for all  } j\neq k  \right\}.
$
The Voronoi regions $R_1, \dots, R_k$ partition the space, forming what is known as the Voronoi diagram.
A face is formed by the intersection of 2 adjacent Voronoi regions; a line is formed at the intersection of two faces 
and a node is formed at the intersection of two or more lines.

We will sample points around the the nodes, lines and faces that are formed at the intersection of the Voronoi regions.
A Voronoi wall model is a sampling scheme that returns points within or around the Voronoi faces.
Similarly, by sampling points exclusively around the lines or exclusively around the nodes, we can construct
 Voronoi filament models and  Voronoi cluster models.

These models were introduced by
\cite{icke1991galaxy} to mimic key features
of cosmological data;
see also \cite{van2011alpha}.

In this example we generate data from filament models and wall models
using the basic definition of Voronoi diagram, computed on a fine grid in $[0,50]^3$.
We also add random Gaussian noise.
See Figure~\ref{fig:Rien}: the first two rows
show 100K particles concentrated around the filaments of 8 and 64
Voronoi cells, respectively. The last two rows show 100K particles
concentrated around the walls of 8 and 64 Voronoi cells. 60K points on the
boundary of the boxes have been added to mitigate boundary bias.
For each model we present the persistent diagrams of the distance function, distance to measure and kernel density estimator.
We chose the smoothing parameters by maximizing the quantity $S(\cdot)$, defined in Section \ref{section::choosing}.

The diagrams illustrate the evolution of the filtrations for the three different functions: at first, the connected components appear (black points in the diagrams); then they merge forming loops (red triangle), that eventually evolve into 3D voids (blue squares).

The persistence diagrams of the three functions allow us to distinguish the 
different models (see Figure~\ref{fig:introExample} for a less trivial example) 
and the confidence bands, generated using the bootstrap method of Section 
\ref{ss:significance}, allow us to separate the topological signal from the 
topological noise.
In general, the DTM performs better than the KDE, which is more affected by the high density of points around the nodes and filaments.
For instance, this is very clear in the third row of Figure~\ref{fig:Rien}. The 
DTM diagram correctly captures the topology of the Voronoi wall model with 8 
nuclei: one connected component and 8 voids are significant, while the remaining 
homological features fall into the band and are classified as noise.
\end{example}

\begin{figure}[!ht]
\begin{center}
\includegraphics[scale=0.315]{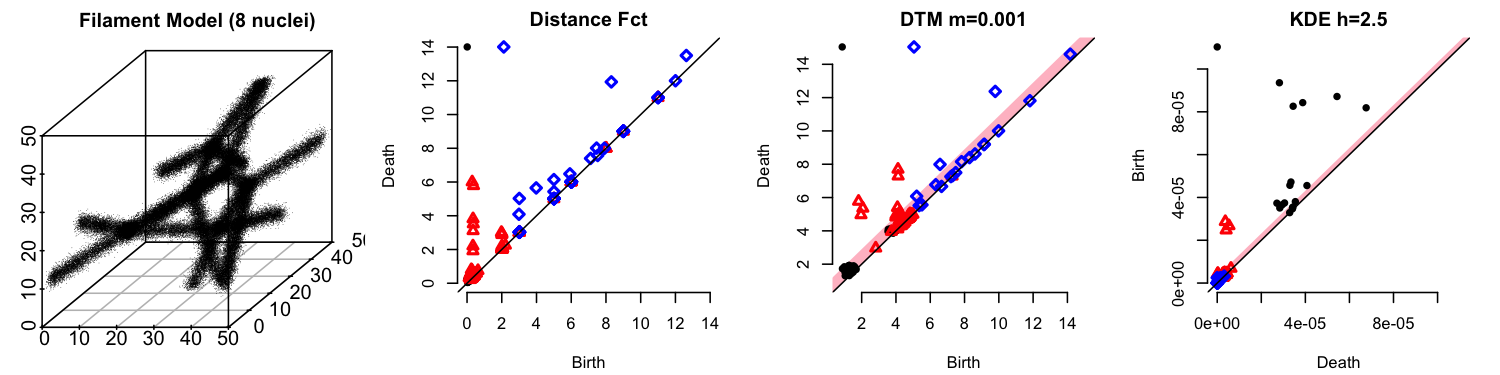}
\includegraphics[scale=0.315]{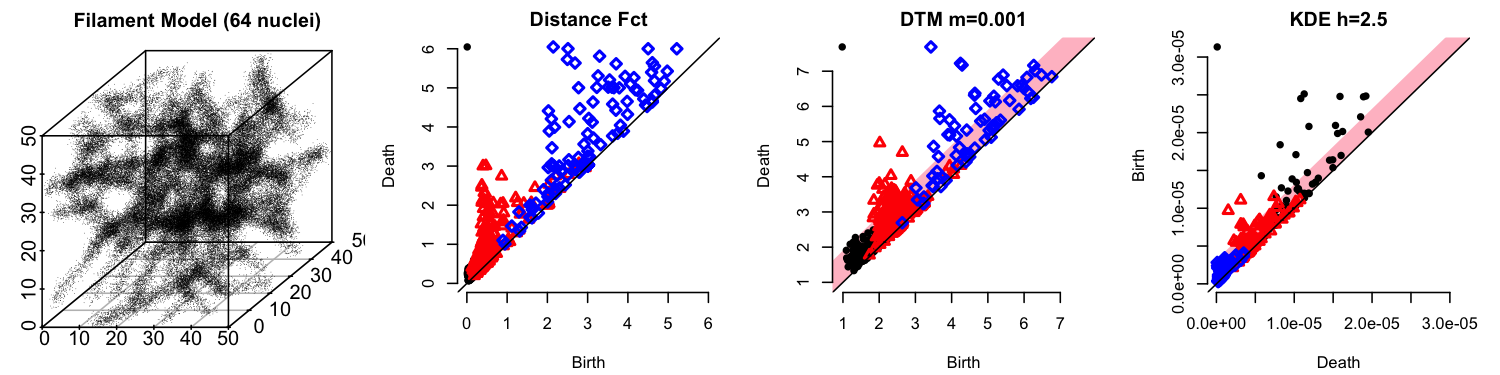}
\includegraphics[scale=0.315]{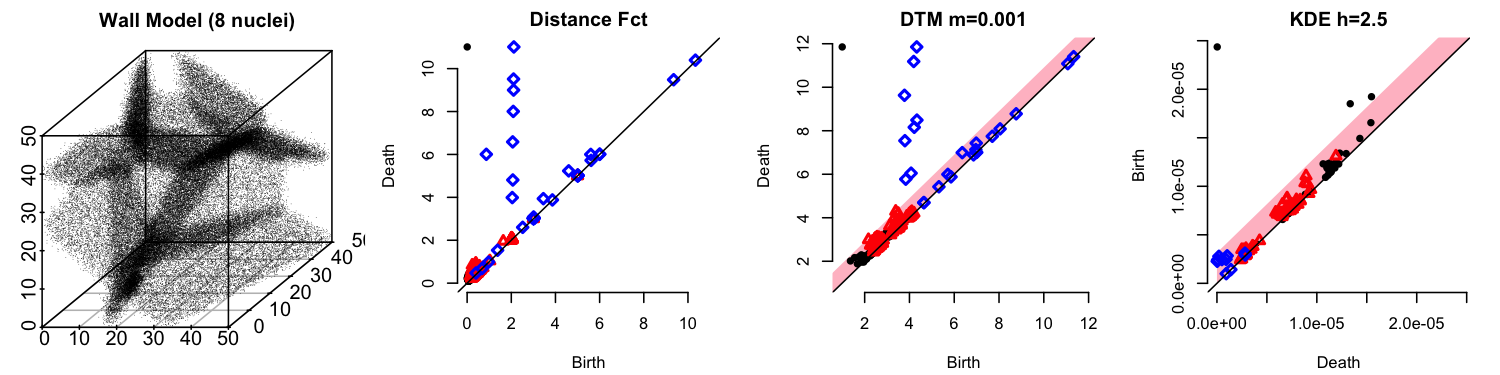}
\includegraphics[scale=0.315]{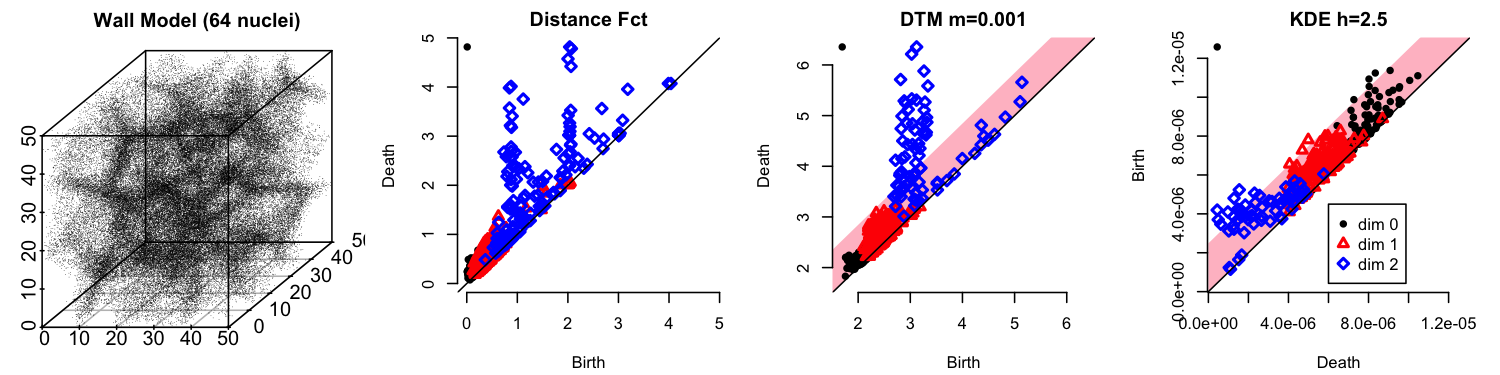}
\end{center}
\caption{
Data from four Voronoi foam models.
In each case we show the diagrams of the
distance function, the DTM and the KDE.
A boundary correction was included.}
\label{fig:Rien}
\end{figure}

\section{Discussion}

In this paper, we showed how the DTM and KDE
can be used for robust topological inference.
Further, we showed how to use the bootstrap to 
identify topological features that are distinguishable 
from noise.
We conclude by discussing two issues:
comparing DTM and KDE, and
using persistent homology versus selecting a single level set.

\subsection{Comparison of DTM and Kernel Distance}
\label{sec::compare}

The DTM and the KDE have the same broad aim:
to provide a means for extracting topological features from data.
However, these two methods are really focused on different goals.
Consider again the model
$P = \pi R + (1-\pi) (Q\star \Phi_\sigma)$
and let $S$ be the support of $Q$.
As before,
we assume that $S$ is a ``small set'' meaning that either it has dimension $k < d$
or that it is full dimensional but has small Lebesgue measure.
When $\pi$ and $\sigma$ are small,
the persistent homology of the upper level sets of the density $p$
will be dominated by features corresponding to the homology of $S$.
In other words, we are using the persistent homology of
$\{ p > t\}$ to learn about the homology of $S$.
In contrast, the DTM is aimed at estimating the persistent homology of $S$.
Both are useful, but they have slightly different goals.

This also raises the intriguing idea of extracting more information
from both the KDE and DTM by varying more parameters.
For example, if we look at the sets
$\{p_h > t\}$ for fixed $t$ but varying $h$,
we get information very similar to that of the DTM.
Conversely, for the DTM, we can vary the tuning parameter $m$.
There are many possibilities here which we will investigate in future work.

\subsection{Persistent Homology Versus Choosing One Level Set}

We have used the persistent homology of the upper level sets
$\{\hat p_h > t\}$ to
probe the homology of $S$.
This is the approach used in
\cite{bubenik2012statistical}
and \cite{phillips2014goemetric}.

Bobrowski et al (2104)
suggest a different approach.
They select a particular level set
$\{ p > t\}$
and they form a robust estimate of the homology
of this one level set.
They have a data-driven method for selecting $t$.
(This approach is only one part of the paper.
They also consider persistent homology.)

They make two key assumptions.
The first is that there exists $A<B$ such that
$\{ p> t\}$ is homotopic to $S$ for all
$A < t < B$.
(If two sets are homotopic, then they have the same homology.)
This is a very reasonable assumption.
In the mixture model
$P = \pi R + (1-\pi) (Q\star \Phi_\sigma)$
this assumption will be satisfied when $S$ is a small set
and when $\pi$ and $\sigma$ are small.
In this case, persistent homology will also work well:
the dominant features in the persistence diagram
will correspond to the homology of $S$.

Bobrowski et al (2104) make an additional assumption.
They assume that the dimension $k$ of $S$ is known and that
the rank of the $k^{\rm th}$ homology group is 0 for all
$t > B$.
This assumption is critical for their approach to choosing a single level set.
Currently,
it is not clear how strong this
assumption is.
In future work, we plan to compare the robustness of the single-level approach
versus persistent homology.

\subsection{Future Work}

Lastly, we would like to mention that
several issues deserve future attention.
In particular, the methods we discussed for
choosing the tuning parameters, for mitigating boundary bias
and for sharpening the data, all deserve further investigation.

In a companion paper we will show how the ideas presented in this work can be used to
develop hypothesis tests for comparing point clouds.

\section*{Acknowledgements}
The authors are grateful to J\'erome Dedecker for pointing out  the key decomposition \eqref{decompos:AnRn} of the DTM.
The authors also would like to thank Jessi Cisewski and Jisu Kim for their comments.


\bibliographystyle{ims}
\bibliography{paper}

\end{document}